\newtheorem{theorem}{Theorem}[section]
\newtheorem{lemma}[theorem]{Lemma}
\newtheorem{lem}[theorem]{Lemma}
\newtheorem{theo}[theorem]{Theorem}
\newtheorem{prop}[theorem]{Proposition}
\newtheorem{cor}[theorem]{Corollary}
\theoremstyle{definition}
\newtheorem{rem}[theorem]{Remark}
\newtheorem{defi}[theorem]{Definition}
\newcommand{\cay}{\widetilde{X}}
\theoremstyle{definition}
\theoremstyle{remark}
\numberwithin{equation}{section}
\begin{document}

\title{The square model for random groups}

\author{Tomasz Odrzyg{\'o}{\'z}d{\'z}}
\address{Institute of Mathematics, Polish Academy of Science,
Warsaw, {\'S}niadeckich 8}
\email{tomaszo@impan.pl}

\begin{abstract}
We introduce a new random group model called the \textit{square model}: we quotient a free group on $n$ generators by a random set of relations, each of which is a reduced word of length four. We prove, as in the Gromov model introduced in \cite{gro93}, that for densities $> \frac{1}{2}$ a random group in the square model is trivial with overwhelming probability and for densities $<\frac{1}{2}$ a random group is with overwhelming probability hyperbolic. Moreover we show that for densities $\frac{1}{4} < d < \frac{1}{3}$ a random group in the square model does not have Property (T). Inspired by the results for the triangular model we prove that for densities $<\frac{1}{4}$ in the square model, a random group is free with overwhelming probability. We also introduce abstract diagrams with fixed edges and prove a generalization of the isoperimetric inequality.
\end{abstract}

\maketitle

%\tableofcontents

\section{Introduction}
%%%%%%%%%%%%%%%%%%%%%%%%%%%%%%%%%%%%%%%%%%%%%%%%%%%%%%%%%%%%%%%%%%%%%%%%%%%%%%%%%%%%%%%%%%%%%%%%%%%%%%%%%%%%%%%%%%%%%%%%%%%%%%%%%%%%%%%%%%%%%%%%%%%%%%%%%%%%%%%%%%%%%%%%%%%%%%%%%%%%%%%%%%%%%%%%%%%%%%%%%%%%%%%%%%

Each group can be obtained by quotienting a free group by a normal subgroup generated by a set of relators. In \cite{gro93} Gromov introduced  the notion of a random finitely presented group on $m \geq 2$ generators at density $d \in (0,1)$. The idea was to fix a set of $m$ generators and consider presentations with $(2m-1)^{dl}$ relators, each of which is a random reduced word of length $l$. Gromov investigated the properties of random groups when $l$ goes to infinity. We say that a property occurs in the Gromov density model with \textit{overwhelming probability} if the probability that a random group has this property converges to $1$ when $l \rightarrow \infty$. Significant results of this theory are the following: for densities $> \frac{1}{2}$ a random group is trivial with overwhelming probability \cite[Theorem 11]{oll05}; for densities $< \frac{1}{2}$ a random group is, with overwhelming probability, infinite, hyperbolic and torsion-free \cite[Theorem 11]{oll05}; for densities $<\frac{1}{5}$ a random group does not have Property (T) with overwhelming probability \cite[Corollary 7.5]{ow11}.

One modification of Gromov's idea is the triangular model: length of relators in the presentation is always 3, but we let the number of generators go to infinity. Precisely, for a fixed density $d$, we consider a presentation on $n$ generators with $n^{3d}$ relations, each of which is a random reduced word of length $3$. We say that some property occurs in the triangular model with \textit{overwhelming probability} if the probability that a random group has this property converges to $1$ when $n \rightarrow \infty$. This model was introduced in \cite{zuk96} and further studied in \cite{kot11}. The triangular model was a way to prove that random groups in the Gromov density model for densities $> \frac{1}{3}$ have Property (T) with overwhelming probability \cite[Theorem B]{kot11}, \cite{zuk03}.

We introduce a new random group model by considering a random set of relations, each of which is a random reduced word of length four. The following notation will be used in the whole paper

Consider the set $A_n = \{a_1, \dots, a_n \}$, which we will treat as an alphabet. Let $W_n$ be the set of positive words of length 4 over $A_n$ and $W'_n$ be the set of all cyclically reduced words of length 4 over  $A_n$. Note that $|W_n| = n^4$ and $|W'_n| = (2n-1)^4$ up to a multiplicative constant. By $F_n$ we will denote the free group generated by the elements of $A_n$. By \textit{relators} we will understand words over generators and by \textit{relations} equalities holding in the group.

\begin{defi}\label{def:gcar}
For $d \in (0,1)$ let us choose randomly, with uniform distribution, a subset $R_n \subset W_n$ such that $|R_n| = \lfloor n^{4d} \rfloor$. Quotienting $F_n$ by the normal closure of the set $R_n$, we obtain a \textit{random group in the positive square model at density d}.
\end{defi}

\begin{defi}\label{def:overwhelming}
We say that property $P$ occurs in the positive square model with \emph{overwhelming probability} if the probability that a random group has property $P$ converges to $1$ when $n \rightarrow \infty$.
\end{defi}

The most important group properties which we consider are: being trivial, being free, being hyperbolic and having Property (T). We prove, as in the Gromov model, that for densities $>\frac{1}{2}$ a random group in the positive square model is trivial with overwhelming probability (Theorem \ref{thm:triv}) and that for densities $<\frac{1}{2}$ a random group is with overwhelming probability hyperbolic (Corollary \ref{cor:hyp}). Moreover we show that for densities $<\frac{1}{3}$ a random group in the positive square model does not have Property (T) (Theorem \ref{thm:nT}). Inspired by the results in the triangular model we prove that for densities $<\frac{1}{4}$ in the positive square model a random group is free with overwhelming probability (Theorem \ref{thm:free}). We also introduce abstract diagrams with fixed  edges (Definition \ref{def:fix}) and prove a generalization of the isoperimetric inequality (Theorem \ref{thm:lab}).

It is not known what is the optimal density threshold for a property of not having Property (T) in the Gromov density model. This model seems to be much harder to analyze than the triangular model, where we know that for densities $< \frac{1}{3}$ a random group is free with overwhelming probability \cite[Proposition 30]{oll05} and that for densities $> \frac{1}{3}$ a random group has Property (T) with overwhelming probability \cite[Theorem A]{kot11}, \cite{zuk03}.

In our model we expect to find the critical density threshold for having Property (T) in further research. We also expect that for densities $< \frac{1}{3}$ a random group in the square model can be cubulated (see \cite{ow11} for discussion about cubulating random groups in the Gromov density model). The advantage of the positive square model is that since the length of relators is even, we can consider the hypergraphs in the presentation complex and Cayley complex of the random group (Definition \ref{def:hyp}), which is not possible in the triangular model. As we will see, hypergraphs are a useful tool to investigate many group theoretic and topological properties.

We have decided to consider as relators only positive words to avoid technical annoyances, but we will show that all of our results remain true in a model where we allow non-positive words. Firstly, we will define this model:

\begin{defi}\label{def:gcar}
For $d \in (0,1)$ let us choose randomly, with uniform distribution, a subset $R_n \subset W'_n$ such that $|R_n| = \lfloor (2n-1)^{4d} \rfloor$. Quotienting $F_n$ by the normal closure of the set $R_n$, we obtain a \textit{random group in the square model at density d}.
\end{defi}

\textbf{Acknowledgment}. I would like to thank Piotr Przytycki for suggesting the topic of this thesis, for his help, valuable suggestions and many interesting discussions. I am also grateful to Irena Danilczuk--Jaworska, Jerzy Trzeciak and Micha{\l}  Kotowski for improving this paper and many corrections.

%%%%%%%%%%%%%%%%%%%%%%%%%%%%%%%%%%%%%%%%%%%%%%%%%%%%%%%%%%%%%%%%%%%%%%%%%%%%%%%%%%%%%%%%%%%%%%%%%%%%%%%%%%%%%%%%%%%%%%%%%%%%%%%%%%%%%%%%%%%%%%%%%%%%%%%%%%%%%%%%%%%%%%%%%%%%%%%%%%%%%%%%%%%%%%%%%%%%%%%%%%%%%%%%%%

\section{Triviality}

Firstly we are going to investigate a case where there are many relations. Inspired by the results in Gromov's theory we have proved the theorem stating that when density is greater than~$\frac{1}{2}$, random groups in positive square models are as trivial as possible (with overwhelming probability). Let us determine ,,how trivial'' can such a group be.

The most obvious case is when all the generators are equal. In that case we indeed have only one generator, g, and, in the positive square model, only one relation $g^4 = e$ (by $e$ we denote the neutral element). It means that our group is isomorphic to $\mathbb{Z}_4$. Hence $\mathbb{Z}_4$ is the smallest group which can be obtained in the positive square models. We will call it \textit{trivial}.

Our goal in this section is to prove the following theorem:

\begin{theo}\label{thm:triv}
In the positive square model at density $d >\frac{1}{2}$ a random group is trivial (in the sense described above) with overwhelming probability.
\end{theo}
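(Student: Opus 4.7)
My plan is to show that, with overwhelming probability, all generators $a_1,\dots,a_n$ become equal in the quotient, so that every relator collapses to $g^4$ for the common image $g$ and the group is $\langle g\mid g^4\rangle\cong\mathbb{Z}_4$. That the quotient cannot be smaller than $\mathbb{Z}_4$ follows from the obvious homomorphism $F_n\to\mathbb{Z}_4$ sending each $a_i$ to a generator: every positive length-$4$ word maps to $4\equiv0$, so this homomorphism factors through the quotient.

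The structural input is the observation that, for any pair $(i,j)\in[n]^2$, every length-$2$ word $a_ka_l$ with $a_ia_ja_ka_l\in R_n$ represents $(a_ia_j)^{-1}$ in the quotient, hence all such words are equal there. I encode this in a bipartite graph $B$ on two copies of $[n]^2$, a \emph{prefix} copy and a \emph{suffix} copy, placing an edge between prefix $(i,j)$ and suffix $(k,l)$ exactly when $a_ia_ja_ka_l\in R_n$. Following a path in $B$ between two suffixes $(k,l)$ and $(k',l')$ yields $a_ka_l=a_{k'}a_{l'}$ in the quotient. So if $B$ is connected, all $n^2$ length-$2$ positive words coincide in the quotient; in particular $a_1a_1=a_1a_i$, giving $a_1=a_i$ for every $i$ after left-cancellation.

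The remaining and main task is to show that $B$ is connected with overwhelming probability when $d>\tfrac12$. Each vertex of $B$ has degree hypergeometrically distributed with mean $\lfloor n^{4d}\rfloor/n^2\sim n^{4d-2}$, which grows polynomially in $n$ when $d>\tfrac12$. A Chernoff-type tail bound and a union bound over the $2n^2$ vertices ensure that the minimum degree is at least $\tfrac12 n^{4d-2}$ with overwhelming probability. The absence of a nontrivial disconnecting partition follows from a standard random-graph calculation: for a candidate partition of $V(B)$ with $a$ prefix and $b$ suffix vertices on the smaller side, the probability that all $\lfloor n^{4d}\rfloor$ relators respect the partition is at most $\exp(-\Omega((a+b)n^{4d-2}))$, and the union bound over the $\binom{n^2}{a}\binom{n^2}{b}$ such partitions vanishes precisely because $n^{4d-2}\gg\log n$. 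Passing from a Bernoulli model (each positive length-$4$ word included independently) to the uniform fixed-size model for $R_n$ is routine by coupling. This probabilistic concentration step is the main technical hurdle; everything else is combinatorial bookkeeping.
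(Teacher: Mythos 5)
Your proposal is correct, but it packages the combinatorics differently from the paper, and the difference is worth noting. The paper also reduces triviality to a random-graph statement about length-$2$ words, but it builds a single graph on the $n^2$ vertices $a_ia_j$, with one edge per relator of $R-R_0$ joining prefix to suffix (the diagonal relators $a_ia_ja_ia_j$, which would be loops, are split off as a set $R_0$ and handled by a separate conditioning argument). Because prefix and suffix occurrences are identified, only \emph{even-length} paths yield $a_ia_j=a_ka_l$, so the paper needs two random-graph lemmas — connectivity and the existence of an odd cycle — together with the remark that a connected non-bipartite graph joins any two vertices by an even path. Your bipartite prefix/suffix graph $B$ absorbs this parity bookkeeping into the structure of the graph itself: every suffix-to-suffix path is automatically even and the induction along it gives equality directly, so plain connectivity of $B$ suffices, and the diagonal relators become ordinary edges rather than loops, dissolving the paper's $R_0$ conditioning. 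The probabilistic content is the same on both routes (expected degree $\sim n^{4d-2}\gg\log n$, a union bound over disconnecting partitions, and the standard monotone-property transfer between the fixed-size and Bernoulli models, which the paper invokes via Erd\H{o}s--R\'enyi and you invoke via coupling); in effect, connectivity of $B$ encodes exactly the ``connected and non-bipartite'' condition the paper extracts from its graph, so you prove one lemma where the paper proves two, at the cost of doubling the vertex set. You are also slightly more careful on the deterministic side: the explicit surjection $F_n\to\mathbb{Z}_4$ sending each generator to $1$ shows the quotient cannot be smaller than $\mathbb{Z}_4$, a point the paper only addresses in its informal discussion before the theorem. The two steps you leave as ``standard'' (the hypergeometric/Chernoff tail and the coupling) are at the same level of detail as the corresponding steps in the paper and do go through.
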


Before providing the proof we need to introduce random graphs and recall several facts about them.

\begin{defi}[Erd{\"o}s--R{\'e}nyi random graph] \label{def:random_graph}
$G(n,m)$ is the graph obtained by sampling uniformly from all graphs with $n$ vertices and $m$ edges.
\end{defi}

\begin{defi}[Gilbert random graph]
$G(n,p)$ is the random graph obtained by starting with vertex set $V = \{1, 2, \dots, n\}$, letting $0 \leq p \leq 1$, and connecting each pair of vertices by an edge with probability $p$.
\end{defi}

In general $m$ and $p$ are functions of $n$. We will say that a random graph in $G(n,p)$ model has some property \textit{asymptotically almost surely} if the probability that this property occurs converges to $1$ when $n \rightarrow \infty$.

\begin{rem}[\cite{er60}]\label{rem:graphs}
There is strong links between the two models. First note that the expected number of edges in $G(n,p)$ model equals $\left(n \atop 2 \right) p$. If $P$ is any graph property which is monotone with respect to the subgraph ordering (meaning that if $A$ is a subgraph of $B$ and $A$ satisfies $P$, then $B$ satisfies $P$ as well), then the statements ``P holds in the Erd{\"o}s and R{\'e}nyi $G(n, p)$ model with overwhelming probability'' and ``P holds in the Gilbert $G(n, \lfloor \left(n \atop 2 \right) p \rfloor)$ model with overwhelming probability'' are equivalent if $pn^2 \rightarrow \infty$.

Graph properties which are monotone in the above sense and are relevant for us are: being connected and having a cycle of odd length.
\end{rem}

Gilbert's model is much easier for calculations than Erd{\"o}s--R{\'e}nyi's one, so we will prove two lemmas in Gilbert's model.

\begin{lemma}\label{lem:conn}
Let $G$ be a random graph in $G(n,p)$ model, where $p \geq n^{\delta - 1}$ for some $\delta > 0$. Then $G$ is connected asymptotically almost surely.
\end{lemma}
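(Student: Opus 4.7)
The plan is to run the standard union-bound argument over possible ``smaller sides'' of a disconnection. If $G$ is disconnected, then the vertex set splits as $S \sqcup (V \setminus S)$ with no edges between the two parts, and after possibly swapping we may assume $|S| = k$ with $1 \leq k \leq \lfloor n/2 \rfloor$. For a fixed such $S$, the $k(n-k)$ potential crossing edges are independent Bernoulli$(p)$, so the probability that none is present is exactly $(1-p)^{k(n-k)}$.

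Taking a union bound over subsets of each size $k$ gives
\begin{equation*}
\Pr[G \text{ is disconnected}] \;\leq\; \sum_{k=1}^{\lfloor n/2 \rfloor} \binom{n}{k}(1-p)^{k(n-k)}.
\end{equation*}
Next I would estimate each summand using $\binom{n}{k} \leq n^k$, the inequality $1-p \leq e^{-p}$, the bound $n-k \geq n/2$ valid for $k \leq n/2$, and the hypothesis $p \geq n^{\delta-1}$:
\begin{equation*}
\binom{n}{k}(1-p)^{k(n-k)} \;\leq\; \exp\bigl(k\ln n - \tfrac{1}{2}pkn\bigr) \;\leq\; \exp\bigl(k(\ln n - \tfrac{1}{2}n^{\delta})\bigr).
\end{equation*}
For $n$ large enough (depending on $\delta$) the bracket is less than $-\ln n$, so each summand is at most $n^{-k}$, and the whole sum is bounded by $\sum_{k \geq 1} n^{-k} = O(1/n)$, which tends to $0$ as $n \to \infty$.

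There is no serious obstacle; this is a textbook computation. The only bookkeeping to watch is capping the index at $\lfloor n/2 \rfloor$ so that $n-k$ stays proportional to $n$ (ensuring $p(n-k) \to \infty$ uniformly in $k$), and observing that for any fixed $\delta > 0$ the term $n^{\delta}$ eventually dominates $\ln n$, which is automatic. The reason the proof is done in the Gilbert model $G(n,p)$ rather than in $G(n,m)$ is precisely that the crossing edges are independent, which is what makes the $(1-p)^{k(n-k)}$ factor exact; the translation to the Erd\H{o}s--R\'enyi setting is then handled by Remark \ref{rem:graphs}, since connectivity is monotone under taking supergraphs and $pn^2 \geq n^{1+\delta} \to \infty$.
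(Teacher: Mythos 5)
Your proof is correct and follows essentially the same route as the paper: a union bound over the smaller side of a potential disconnecting partition, the estimate $\binom{n}{k}\le n^k$, the exponential bound on $(1-p)^{k(n-k)}$ using $n-k\ge n/2$ and $p\ge n^{\delta-1}$, and summing the resulting geometric-type series. The only cosmetic difference is that you use $1-p\le e^{-p}$ where the paper works with $|\ln(1-x)|>x$, which is the same estimate in disguise.
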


More general statements about connectedness of random graphs can be found for example in \cite[Theorem 7.3]{bela}, but since we repeat the argument in the proof of Lemma \ref{lem:cycle} we present here a simple proof of Lemma \ref{lem:conn}.

\begin{proof}
Denote by $V$ the set of vertices of $G$. Note that disconnectedness means that there exist two nonempty sets $S, T \subset V$ such that $S \cup T = V$, $S \cap T = \emptyset$ and there is no edge between $S$ and $T$. For a fixed $S$ and $T$ the probability that there is no edge between $S$ and $T$ equals $(1-p)^{|S|\cdot |T|}$. Hence the probability $P_d$ of disconnectedness can be estimated (as of now we assume $n > 2$):

$$ P_d \leq \sum_{l=1}^n \left(n \atop l \right)(1 - p)^{l(n-l)} $$

The right hand side can be estimated as follows

\begin{equation}\label{eq:num}
\sum_{l=1}^n \left(n \atop l \right)(1 - p)^{l(n-l)} \leq 2\sum_{l=1}^{\lceil \frac{n}{2} \rceil} \left(n \atop l \right)(1 - p)^{l(n-l)}
\leq  2\sum_{l=1}^{\lceil \frac{n}{2} \rceil} n^l (1 - p)^{l \lfloor \frac{n}{2} \rfloor}
\end{equation}

From our assumption on $p$ we know that

$$ n(1 - p)^{\lfloor \frac{n}{2} \rfloor} \leq n (1 - n^{\delta - 1})^{\frac{n}{2} - 1}.$$

Let us denote $z_n = n (1 - n^{\delta - 1})^{\frac{n}{2} - 1}$. Because $z_n > 0$ instead of proving that $\displaystyle \lim_{n \rightarrow \infty} z_n = 0$ we can prove $\displaystyle \lim_{n \rightarrow \infty} \ln z_n = -\infty$. It is well known that $|\ln(1-x)| > x$ for $x \in (0,1)$. Hence, we estimate:

$$\displaystyle \ln z_n = \ln n + \left( \frac{n}{2} - 1 \right) \ln \left( 1 - \frac{n^{\delta}}{n} \right) < \ln n - \left( \frac{n}{2} - 1 \right)\frac{n^{\delta}}{n}.$$

Therefore, $\displaystyle \lim_{n \rightarrow \infty} \ln z_n = -\infty$. Thus the geometric series on the right hand side of (\ref{eq:num}) converges to 0 when $n \rightarrow \infty$.
\end{proof}

\begin{lemma}\label{lem:cycle}
Let $G$ be a random graph in $G(n,p)$ model, where $p \geq n^{\delta - 1}$ for some $\delta > 0$. Then asymptotically almost surely there is a cycle of odd length in $G$.
\end{lemma}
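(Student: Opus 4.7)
The plan is to exploit the elementary fact that a graph contains a cycle of odd length if and only if it is not bipartite. So it suffices to show that $G$ is bipartite with probability tending to $0$, which I would do by a union bound over all vertex bipartitions, in the same spirit as the proof of Lemma~\ref{lem:conn}.

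First I would fix a bipartition $V = S \sqcup T$ with $|S| = k$. The event that $G$ has no edges inside $S$ and no edges inside $T$ is the intersection of $\binom{k}{2} + \binom{n-k}{2}$ mutually independent absence-of-edge events, and so has probability exactly $(1-p)^{\binom{k}{2}+\binom{n-k}{2}}$. Every bipartite graph on $V$ admits at least one such partition, so by a union bound
\[
P(G \text{ is bipartite}) \leq \sum_{k=0}^{n}\binom{n}{k}(1-p)^{\binom{k}{2}+\binom{n-k}{2}}.
\]

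To finish, I would observe that $\binom{k}{2}+\binom{n-k}{2}$, as a function of $k$, is convex and attains its minimum near $k = n/2$, where it is bounded below by $n^2/4 - n/2$. Combining this with the crude estimate $\binom{n}{k}\leq 2^n$ together with $(1-p)^m \leq e^{-pm}$ and the hypothesis $p\geq n^{\delta-1}$, the sum is dominated by $(n+1)\cdot 2^{n}\cdot \exp(-n^{\delta+1}/4 + n^{\delta}/2)$. Since $\delta > 0$, the negative exponential term eventually overwhelms $n\ln 2$ and the whole bound tends to $0$, as required.

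I do not anticipate a genuine obstacle. The calculation is structurally parallel to Lemma~\ref{lem:conn}, with the quadratic exponent $\binom{k}{2}+\binom{n-k}{2}$ taking over the role of $l(n-l)$, and the final asymptotic comparison is identical in flavor to the $\ln z_n \to -\infty$ step carried out there. The only conceptual point worth a sentence of care is the equivalence between ``containing a cycle of odd length'' and ``being non-bipartite,'' which is classical and requires no connectedness hypothesis on $G$, applying to each connected component separately.
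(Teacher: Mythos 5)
Your argument is correct, and its skeleton is the same as the paper's: both proofs reduce ``no odd cycle'' to bipartiteness and then run a union bound over vertex bipartitions. The executions differ, though, in ways worth recording. The paper first appeals to connectedness (Lemma \ref{lem:conn}) and a spanning-tree construction to extract the bipartition, and then tries to fold the resulting sum back into the same geometric series $\sum_l n^l(1-p)^{nl}$ used in the connectedness proof; as written, the exponent displayed there does not equal the probability of the relevant event (which is $(1-p)^{\binom{l}{2}+\binom{n-l}{2}}$, the number of potential edges inside the two parts), and the comparison with $nl$ fails for $l$ near $n/2$, where $\binom{l}{2}+\binom{n-l}{2}\approx n^2/4$ while $nl\approx n^2/2$. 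You sidestep both points: you use the classical equivalence ``odd cycle iff non-bipartite'' with no connectedness hypothesis (so Lemma \ref{lem:conn} is not needed at all), keep the correct exponent, bound it below by its minimum value $n^2/4-n/2$ at $k=n/2$ via convexity, and beat the $2^n$ entropy factor directly using $p\geq n^{\delta-1}$, arriving at the bound $(n+1)\,2^n\exp\bigl(-n^{\delta+1}/4+n^{\delta}/2\bigr)\to 0$. So your route is slightly more elementary and, in the exponent bookkeeping, more careful than the paper's; it proves the lemma in full.
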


\begin{proof}
We will estimate the probability that our graph is connected and has no cycle of odd length. First we will prove that if the graph is connected and has no cycle of odd length then it is bipartite:

Let $V$ be the set of vertices of $G$. Denote by $T$ the spanning tree of $G$. Such a tree exists from connectedness. Fix some vertex $v \in V$. Then we can define sets:
$$ A = \{ u \in V \emph{ : there is a path in } T  \emph{ between u and v  of odd length} \}$$
$$ B = \{ u \in V \emph{ : there is a path in } T  \emph{ between u and v  of even length} \}.$$

Sets $A$ and $B$ make a partition of $V$:  $A \cap B = \emptyset$, $A \cup B = V$. Hence, it is sufficient to show that the graph is connected and not bipartite.

For fixed sets $A, B \subset V$ the probability that there is no edge with both ends in $A$ equals $(1-p)^{\left(|A| \atop 2 \right)}.$ Thus the probability $P_c$ that there is no cycle of odd length can be estimated:

\begin{equation}\label{cyc}
P_c \leq \sum_{l=1}^{n-1} \left(n \atop l \right) (1-p)^{\left(n \atop l \right) \left(n \atop n - l \right) } \leq \sum_{l=1}^{n-1} n^l (1 - p)^{nl}
\end{equation}

The summation is over all sets $A$ and $B$. In the last inequality we used the fact: $\left(n \atop l \right) \left(n \atop n - l \right) > nl$. In the proof of Lemma \ref{lem:conn} we have already shown that the right hand side of (\ref{cyc}) converges to 0  when $n \rightarrow \infty$. This ends the proof.
\end{proof}

\begin{rem}\label{rem:path}
Let $G$ be a connected graph that has a cycle of odd length. Let $x, y$ be vertices of $G$. Then there exists an edge path in $G$ of even length joining $x$ and $y$.
\end{rem}

\begin{proof}
Denote by $\gamma_c$ the closed edge path in $G$ of odd length. Let $v$ be the beginning vertex of $\gamma_c$. From connectedness of $G$ there exist edge paths $\gamma_{xv}$ from $x$ to $v$ and $\gamma_{vy}$ from $v$ to $y$. If $|\gamma_{xv} \cup \gamma_{vy}|$ is an even number then $\gamma_{xv} \cup \gamma_{vy}$ is the desired edge path joining $x$ and $y$. If $|\gamma_{xv} \cup \gamma_{vy}|$ is an odd number then $\gamma_{xv} \cup \gamma_c \cup \gamma_{vy}$ is the desired edge path.
\end{proof}

Now we are able to provide the proof of our main statement.

\begin{proof}[Proof of Theorem~\ref{thm:triv}]

Let $D_n$ be the set of positive words of length 2 over $A_n$, i.e. $D_n = \{a_i a_j\ |\  i, j = 1, \dots, n \}$. The set of positive words of length 4 over $A_n$ coincides with the set of positive words of length 2 over $D_n$.

Let $R$ be the set of relators in the presentation of the random group and denote by $R_0 \subset R$ the set of elements of $R$ of form $a_i a_j a_i a_j$ for $1 \leq i, j \leq n$. Let $P_k$ be the probability that group $\left< A_n | R - R_0 \right>$ is trivial. We denote by $\widetilde{P}_k$ the probability that $|R_0| = k$. Then from the Bayes formula the probability that a random group is trivial is greater than:

\begin{equation}\label{eq:estyma}
\sum_{k=0}^{n^2} \widetilde{P}_k P_k
\end{equation}

It can be easily seen that $P_0 > P_1 > \dots > P_{n^2}$ and $\sum_{k=0}^{n^2} \widetilde{P}_k = 1$. We will prove that $P_{n^2} \rightarrow 1$ when $n \rightarrow \infty$, which will imply that (\ref{eq:estyma}) converges to $1$ when $n \rightarrow \infty$

Let us assume that $|R_0| = n^2$. Consider the following graph $G$ with the set of vertices $D_n$: when the relator $a_i a_j a_k a_l$ belonging to $R - R_0$ is drawn, we add the edge in $G$ connecting the vertices $a_i a_j$ and $a_k a_l$. Thus $G$ is a random graph in $G(n^2, \lfloor n^{4d} \rfloor - n^2)$ Erd{\"o}s and R{\'e}nyi model.

Let us consider a random graph $G'$ in $G \left(n^2, \frac{1}{\left(n \atop 2 \right)}\left( \lfloor n^{4d} \rfloor - n^2 \right)\right)$ Gilbert model. From Lemmas \ref{lem:conn} and \ref{lem:cycle} we know that asymptotically almost surely graph $G'$ is connected and has a cycle of odd length. Hence, from Remark \ref{rem:graphs} asymptotically almost surely graph $G$ is connected and has a cycle of odd length. Therefore, from Remark \ref{rem:path} asymptotically almost surely for any two vertices of $G$ there is a path of even length joining them.

An edge between the vertices $a_i a_j$ and $a_k a_l$ of $G$ corresponds to the relation $a_i a_j = (a_k a_l)^{-1}$ in our random group. An adjacent edge connecting  $a_k a_l$ and $a_t a_s$ implies that $a_i a_j =a_t a_s$. Therefore, by induction, if there is a path of even length joining $a_i a_j$ and  $a_k a_l$ we have that $a_i a_j = a_k a_l$. According to the previous observations about graph $G$ this means that with overwhelming probability all words $a_i a_j$ are equal. In particular, for any $i, j, k$ we have $a_i a_k = a_j a_k$ which implies that $a_i = a_j$. Therefore, all generators are equal. This ends the proof.
\end{proof}

\subsection{Triviality in the square model}

Our goal is to prove the following

\begin{theo}\label{thm:square_triv}
In the square model at density $d >\frac{1}{2}$ a random group is trivial with overwhelming probability.
\end{theo}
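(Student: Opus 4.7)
The plan is to mirror the proof of Theorem~\ref{thm:triv} essentially verbatim, with the alphabet $A_n$ replaced by the extended alphabet $A_n \cup A_n^{-1}$ of size $2n$. Let $D'_n$ be the set of reduced length-two words over $A_n \cup A_n^{-1}$, so that $|D'_n| = 2n(2n-1) = \Theta(n^2)$. Each cyclically reduced relator $w_1 w_2 w_3 w_4 \in W'_n$ splits canonically into a pair of reduced length-two halves $(w_1 w_2, w_3 w_4) \in D'_n \times D'_n$, and the relation $w_1 w_2 w_3 w_4 = e$ is equivalent to $w_1 w_2 = (w_3 w_4)^{-1}$ in the quotient. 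We build a graph $G$ on vertex set $D'_n$ by adding, for each relator in $R_n$, an edge between its two halves.

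As in the proof of Theorem~\ref{thm:triv}, set aside the subset $R_0 \subset R_n$ of degenerate relators contributing loops, namely those of the form $uu$ with $u \in D'_n$; there are at most $|D'_n| = O(n^2)$ such relators, negligible compared to $|W'_n| = \Theta(n^4)$. The Bayes-style estimate reduces the problem to showing that, conditional on $|R_0|$ being maximal, the residual graph has the desired properties with overwhelming probability. For $d > 1/2$ the residual edge count is $\Theta(n^{4d})$, so the comparison Gilbert model is $G(|D'_n|, p)$ with $p \sim n^{4d-4}$. Since $|D'_n| = \Theta(n^2)$, we have $p \geq |D'_n|^{\delta-1}$ for some $\delta > 0$, and Lemmas~\ref{lem:conn} and~\ref{lem:cycle} yield that $G$ is connected and contains an odd cycle with overwhelming probability.

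The main technical obstacle is that, unlike in the positive case, not every pair $(u,v) \in D'_n \times D'_n$ corresponds to a valid relator: one also needs $uv$ to be cyclically reduced, i.e., writing $u = w_1 w_2$ and $v = w_3 w_4$, one needs $w_2 \neq w_3^{-1}$ and $w_1 \neq w_4^{-1}$. However, for each $u \in D'_n$ only $O(n)$ of the $|D'_n|$ choices of $v$ are excluded, so between any two sets $S, T \subset D'_n$ at most $O((|S|+|T|)\,n)$ of the $|S|\,|T|$ potential cross-edges are forbidden. This loss is dominated by the exponential slack in the proofs of Lemmas~\ref{lem:conn} and~\ref{lem:cycle}, so those arguments adapt with only minor constant changes.

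By Remark~\ref{rem:path}, connectivity together with an odd cycle implies that any two vertices of $G$ are joined by an even-length path. Each edge of $G$ asserts $u = v^{-1}$ in the quotient, so an even-length path gives $u = v$; hence all elements of $D'_n$ become identified in the quotient. Choosing $k$ avoiding $\pm i$ and $\pm j$, the equation $a_i a_j = a_k a_j$ yields $a_i = a_k$, so all generators collapse to a single element $g$; moreover, for any $i \neq j$ the equation $a_i a_j = a_i^{-1} a_j$ (both sides being reduced length-two words) gives $g = g^{-1}$. Thus with overwhelming probability the quotient is $\langle g \mid g^2 \rangle \cong \mathbb{Z}_2$, the smallest group realizable in the square model, which is the appropriate notion of triviality in this setting.
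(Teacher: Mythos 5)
Your proposal is correct in substance, but it takes a genuinely different route from the paper's. The paper does not revisit the graph argument at all: it proves (Lemma~\ref{lem:ddp}, via Serfling's inequality for sampling without replacement) that for any $d'<d$ the random set $R_n$ contains at least $n^{4d'}$ \emph{positive} relators with overwhelming probability, picks $\frac{1}{2}<d'<d$, and then invokes Theorem~\ref{thm:triv}: the positive relators alone already collapse all generators. You instead rerun the whole graph argument over the doubled alphabet, with vertex set the $2n(2n-1)$ reduced two-letter words and one edge per relator between its two halves; the genuinely new point in your route is that not all vertex pairs are admissible edges, and your count that each vertex excludes only $O(n)$ partners, which is absorbed by the exponential slack in Lemmas~\ref{lem:conn} and~\ref{lem:cycle}, is the right way to handle it. As for what each approach buys: the paper's reduction is shorter and reuses Theorem~\ref{thm:triv} as a black box, but as written it only exhibits the square-model group as a quotient of the positive-model ``trivial'' group $\mathbb{Z}_4$, leaving the exact isomorphism type implicit; your direct argument identifies all reduced two-letter words, forces $g=g^{-1}$, and so pins the group down to $\mathbb{Z}_2$, which is indeed the right notion of triviality here. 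Two small points you should make explicit: first, your identifications only show the group is a \emph{quotient} of $\langle g \mid g^2\rangle$, so to conclude it equals $\mathbb{Z}_2$ (and that $\mathbb{Z}_2$ is minimal in this model) add the one-line remark that every length-four relator has even exponent sum, whence every square-model group surjects onto $\mathbb{Z}_2$; second, your identification of the relator set with an Erd{\"o}s--R{\'e}nyi graph inherits the paper's own mild informality (possible multi-edges, conditioning on $|R_0|$ maximal), which is acceptable here since it is no worse than in the proof of Theorem~\ref{thm:triv} that both arguments rest on.
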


First we will prove

\begin{lem}\label{lem:ddp}
Let $W_n \subset W'_n$ be the set of positive words of length four over $A_n$. Let $G = \left< A_n | R_n \right>$ be the random group in the square model at density $d$. Then, for any $d' < d$:

$$\mathbb{P}(|R_n \cap W_n| > n^{4d'}) \rightarrow 1$$
as $n \rightarrow \infty$.
\end{lem}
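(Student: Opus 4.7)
The plan is to observe that the random variable $|R_n \cap W_n|$ has a hypergeometric distribution, compute its mean, and then apply Chebyshev's inequality.

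Concretely, since $R_n$ is chosen uniformly at random among all subsets of $W'_n$ of size $\lfloor (2n-1)^{4d} \rfloor$, the number $X_n := |R_n \cap W_n|$ is hypergeometric with parameters $N = |W'_n|$, $K = |W_n| = n^4$, and sample size $|R_n|$. Its mean is
$$
\mu_n = \mathbb{E}[X_n] = |R_n|\cdot\frac{|W_n|}{|W'_n|}.
$$
From the paper's introduction, $|W'_n| = \Theta(n^4)$, so $|W_n|/|W'_n|$ is bounded below by a positive constant (roughly $1/16$ as $n\to\infty$). Combined with $|R_n| = \lfloor (2n-1)^{4d}\rfloor = \Theta(n^{4d})$, this gives $\mu_n = \Theta(n^{4d})$. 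In particular, for any fixed $d' < d$, we have $\mu_n/2 > n^{4d'}$ once $n$ is large enough.

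Next I would control the lower tail. For a hypergeometric random variable one has
$$
\mathrm{Var}(X_n) \;=\; \frac{|R_n|\, K (N-K)(N-|R_n|)}{N^2(N-1)} \;\leq\; |R_n|\cdot\frac{K}{N} \;=\; \mu_n.
$$
Chebyshev's inequality therefore yields
$$
\mathbb{P}\bigl(X_n \leq \mu_n/2\bigr) \;\leq\; \mathbb{P}\bigl(|X_n-\mu_n|\geq \mu_n/2\bigr) \;\leq\; \frac{4\,\mathrm{Var}(X_n)}{\mu_n^{\,2}} \;\leq\; \frac{4}{\mu_n}.
$$
Since $\mu_n = \Theta(n^{4d})\to\infty$, the right-hand side tends to $0$. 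Combining this with $\mu_n/2 > n^{4d'}$ for large $n$ gives
$$
\mathbb{P}\bigl(X_n > n^{4d'}\bigr) \;\geq\; \mathbb{P}\bigl(X_n > \mu_n/2\bigr) \;\to\; 1,
$$
which is the claim.

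There is no real obstacle here; the only care needed is to verify rigorously that $|W_n|/|W'_n|$ is bounded below by a positive constant (e.g.\ by the elementary estimate $|W'_n| \leq (2n)^4$), and to make the Chebyshev bound explicit for the hypergeometric distribution. A Chernoff-type bound would give an exponentially small failure probability but is unnecessary here, as the statement only requires convergence to $1$.
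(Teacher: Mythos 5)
Your argument is correct, and it takes a genuinely different, more elementary route than the paper's. The paper realizes $R_n$ as a sequence of draws without replacement, introduces indicator variables $X_i$ for the event that the $i$-th relator lies in $W'_n - W_n$, and invokes Serfling's concentration inequality for sampling without replacement to conclude that, with probability $1-\exp\left(-c\lfloor(2n-1)^{4d}\rfloor\right)$, at most a $\frac{16}{17}$ fraction of the relators is non-positive, hence at least $\frac{1}{17}\lfloor(2n-1)^{4d}\rfloor > n^{4d'}$ of them are positive. You instead observe directly that $|R_n\cap W_n|$ is hypergeometric, that its mean is $\Theta(n^{4d})$ because $|W_n|/|W'_n|$ is bounded below by a positive constant (e.g.\ via $|W'_n|\le(2n)^4$), and that its variance is at most its mean, so Chebyshev's inequality gives failure probability $O(n^{-4d})$, which is all the lemma asserts. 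The trade-off is quantitative: Serfling's bound yields an exponentially small exceptional probability, whereas your second-moment argument gives only a polynomial rate; but since the lemma is applied just once, in combination with a single overwhelming-probability event in the proof of Theorem \ref{thm:square_triv}, no union bound over many events is required and the weaker rate suffices. Your version has the advantage of being self-contained, avoiding the external concentration inequality altogether.
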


\begin{proof}
First we will prove that
\begin{equation}\label{lemma:gromov_lemma}
\mathbb{P} \left( |R_n \cap (W'_n - W_n)| > \frac{16}{17}\lfloor (2n-1)^{4d} \rfloor \right) \rightarrow 0,
\end{equation}
when $n \rightarrow \infty$.

Drawing at random a set of relators can be treated as sampling without replacement: we draw the first relator $r_1$, then from the set $W'_n - r_1$ we draw the next relator $r_2$ and inductively in the $k$-th step we draw the relator $r_k$ from the set $W'_n - \{r_1, r_2, \dots, r_{k-1} \} $. Therefore, we obtain a sequence of $\lfloor (2n-1)^{4d} \rfloor$ random variables $r_1, r_2, \dots , r_{\lfloor (2n-1)^{4d} \rfloor }$ and we define $R_n := \{r_1, r_2, \dots, r_{\lfloor (2n-1)^{4d} \rfloor} \}$.

For $1 \leq i \leq |R_n|$ we define a random variable $X_i$: $X_i = 1$ when $r_i \in (W'_n - W_n)$ and $0$ otherwise. Note that $\frac{|W'_n - W_n|}{|W'_n|} < \frac{15}{16}$. Hence for each $1 \leq i \leq |R_n|$ we have $\mathbb{E} X_i < \frac{15}{16}$. Let $S:= \sum_{i=1}^{|R_n|} X_i$. By \cite[Corollary 1.1]{serfling} for $\mu = \frac{|W_n|}{|W'_n|}, a=1, b=1$ and $f_* = 0$ we obtain

$$\mathbb{P}\left(S \geq \lfloor (2n-1)^{4d} \rfloor(\mu + t) \right) < \exp(-2\lfloor (2n-1)^{4d} \rfloor t^2).$$

Let $t=\frac{16}{17} - \mu$. Then

\begin{equation}\label{serfling_pomocniczy}
\mathbb{P}\left(S \geq \lfloor (2n-1)^{4d} \rfloor \frac{16}{17} \right) < \exp(-2 \lfloor (2n-1)^{4d} \rfloor t^2).
\end{equation}

Since $\mu < \frac{15}{16}$ we have $t > \frac{16}{17} - \frac{15}{16}$ and the right hand side of (\ref{serfling_pomocniczy}) converges to 0 when $n \rightarrow \infty$. Observe that $S = |R_n \cap (W'_n - W_n)|$, so the proof of (\ref{lemma:gromov_lemma}) is complete. Consequently,

 \begin{equation}
 \mathbb{P} \left( |R_n \cap W_n|) > \frac{1}{17}\lfloor (2n-1)^{4d} \rfloor \right) \rightarrow_{n \rightarrow \infty} 1.
 \end{equation}

Since for sufficiently large $n$ we have $ \frac{1}{17}\lfloor (2n-1)^{4d}\rfloor > n^{4d'} $ the proof of Lemma \ref{lem:ddp} is complete.
\end{proof}

\begin{proof}[Proof of Theorem \ref{thm:square_triv}]
Let $G$ be a random group in the square model at density $d > \frac{1}{2}$. Choose any $\frac{1}{2} < d' < d$. Lemma \ref{lem:ddp} guarantees that there are at least $n^{4d'}$ positive relators in the presentation of $G$. From the previous section we know that this is sufficient for $G$ to be trivial with overwhelming probability.
\end{proof}

%%%%%%%%%%%%%%%%%%%%%%%%%%%%%%%%%%%%%%%%%%%%%%%%%%%%%%%%%%%%%%%%%%%%%%%%%%%%%%%%%%%%%%%%%%%%%%%%%%%%%%%%%%%%%%%%%%%%%%%%%%%%%%%%%%%%%%%%%%%%%%%%%%%%%%%%%%%%%%%%%%%%%%%%%%%%%%%%%%%%%%%%%%%%%%%%%%%%%%%%%%%%%%%%%%

\section{Isoperimetric inequality}

In this chapter we are going to introduce van Kampen diagrams, then prove the ,,isoperimetric inequality'' and discuss its consequences. As we will see it implies (with overwhelming probability) freeness of random groups for densities $\leq \frac{1}{4}$ and hyperbolicity for $\leq \frac{1}{2}$.

%Moreover, from this inequality we will conclude that the presentation complex is aspherical for densities $< \frac{1}{2}$, which implies that for densities between $\frac{1}{4}$ and $\frac{1}{2}$ with overwhelming probability a random group is not free and not trivial.

The van Kampen diagrams concept is a geometric way to represent how all equalities holding in a group are derived from combinations of relators. The definitions and notations below largely follow \cite[Ch. V.]{ls}.

\begin{defi}\label{def:van_kampen}
(\textit{Van Kampen diagrams}). Let $G=\langle A | R\, \rangle$ be a group presentation where all $r \in R$ are cyclically reduced words in the free group $F(A)$. We will denote this presentation by $(\dagger)$. The alphabet $A$ and the set of defining relations $R$ are often assumed to be finite, which corresponds to a finite group presentation, but this assumption is not necessary for the general definition of a van Kampen diagram. Let $R_*$ be the symmetrized closure of $R$, that is, let $R_*$ be obtained from $R$ by adding all cyclic permutations of elements of $R$ and of their inverses.

A \textit{van Kampen diagram} over the presentation $(\dagger)$ is a planar finite cell complex $\mathcal D\, $, given with a specific embedding $\mathcal D\subseteq \mathbb R^2\,$ with the following additional data and satisfying the following additional properties:

\begin{enumerate}
\item The complex $\mathcal D\,$ is connected and simply connected.
\item Each edge (one-cell) of $\mathcal D\,$ is labelled by an arrow and a letter $a \in A$.
\item Some vertex (zero-cell) which belongs to the topological boundary of $\mathcal D\subseteq \mathbb R^2\,$ is specified as a base-vertex.
\item For each region (two-cell) of $\mathcal D\,$ for every vertex the boundary cycle of that region and for each of the two choices of direction (clockwise or counter-clockwise) the label of the boundary cycle of the region read from that vertex and in that direction is a freely reduced word in $F(A)$ that belongs to $R_*$.

\end{enumerate}
\end{defi}

A van Kampen diagram $\mathcal D\,$ is called \textit{non-reduced} if there exists a \textit{reduction pair} in $\mathcal D\,$, that is, a pair of distinct regions of $\mathcal D\,$ such that their boundary cycles share a common edge and such that their boundary cycles, read starting from that edge, clockwise for one of the regions and counter-clockwise for the other, are equal as words in $A  \cup A^{-1}$. If no such pair of regions exists,  $\mathcal D\,$ is called \textit{reduced}.

An \textit{internal edge} is an edge $e$ such that $\emph{Int(}e\emph{)} \subset \emph{Int(} \mathcal D\, \emph{)}$. An \textit{internal vertex} is a vertex contained in $ \emph{Int(} \mathcal D\, \emph{)}$.

By $|\partial \mathcal{D}|$ we will denote the length of the boundary word of diagram $\mathcal{D}$ and by $|\mathcal{D}|$ the number of faces of $\mathcal{D}$.

\begin{defi}\label{def:annular}
If in the definition above we replace simple connectivity by the assumption that our diagram is homotopically equivalent to an annulus, we get the definition of an \textit{annular diagram}.
\end{defi}

\begin{defi}\label{def:moebius}
If we change two things in Definition \ref{def:van_kampen}:
\begin{enumerate}
\item Replace the assumption that the cell complex is planar and given with a specific embedding into $\mathbb{R}^2$ by the assumption that the cell complex is given with an embedding into the real projective plane
\item Replace simple connectivity by the assumption that our diagram is homotopically equivalent to the M\"{o}bius strip.
\end{enumerate}
we get the definition of a \textit{twisted diagram}.
\end{defi}

An important theorem of Van Kampen states that the boundary words of Van Kampen diagrams are exactly those words which are equal to the identity element in the presentation.

\begin{lemma}\label{lem:parzy}
If in the presentation $\left< A_n | R \right>$ the set of relators $R$ consists of only positive words then for every van Kampen diagram $\mathcal{D}$ with respect to this presentation each internal vertex of $\mathcal{D}$ has even valence.
\end{lemma}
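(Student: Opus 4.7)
The plan is to classify each edge incident to an internal vertex $v$ according to whether its arrow points toward $v$ or away from $v$, and then exploit the positivity of the relators to force these two classes to alternate cyclically around $v$. The key observation is that since every $r \in R$ is a positive word, the symmetrized closure $R_*$ splits cleanly: cyclic permutations of elements of $R$ are purely positive words, while cyclic permutations of their inverses are purely negative words. Consequently, for each $2$-cell $F$, exactly one of the two orientations of $\partial F$ gives a positive label, namely the one in which every boundary edge is traversed in the direction of its arrow; I will refer to this as the \emph{positive direction} of $\partial F$.

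Fix an internal vertex $v$ and list the half-edges at $v$ as $e_1, \dots, e_k$ in the cyclic order coming from the planar embedding, so that the valence of $v$ equals $k$. Each pair of consecutive half-edges $(e_i, e_{i+1})$, with indices mod $k$, bounds a corner of some $2$-cell $F_i$ at $v$. Traversing $\partial F_i$ in its positive direction, these two half-edges are visited one immediately after the other: one is crossed on the way into $v$, so its arrow points toward $v$, and the other on the way out, so its arrow points away from $v$. Labelling each half-edge ``in'' or ``out'' according to the direction of its arrow at $v$, every corner therefore enforces that consecutive labels around $v$ differ. A cyclic alternation of two labels on $k$ positions is possible only when $k$ is even, which yields the conclusion.

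The one place that requires care, and where I expect the main obstacle to lie, is bookkeeping in degenerate situations: whether the positive direction of $\partial F_i$ agrees with the counter-clockwise or clockwise orientation of the embedding, and whether a single topological edge or $2$-cell can meet $v$ more than once. Neither of these affects the argument: the in/out analysis of a corner depends only on which of its two half-edges is traversed first in the positive direction, both options giving the same ``one in, one out'' conclusion, and working with half-edges rather than topological edges or distinct faces handles multiplicities automatically.
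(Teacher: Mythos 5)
Your proof is correct and uses essentially the same mechanism as the paper: positivity forces each face boundary to be traversed consistently with the edge arrows, and this produces an alternating pattern around any internal vertex, which forces even valence. The paper phrases the alternation in terms of consecutive faces around $v$ having opposite orientations, while you phrase it as the in/out labels of consecutive half-edges differing; these are the same argument with slightly different bookkeeping, and your half-edge formulation handles loops and repeated faces at $v$ a bit more explicitly.
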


\begin{proof}
Let $v$ be a vertex of $\mathcal{D}$ and denote by $e_1, e_2, \dots, e_k$ consecutive edges of $\mathcal{D}$ ending in $v$. For $1 \leq i \leq k$ let $F_{i}$ be the face containing edges $e_i$ and $e_{i+1}$ (where $e_{k+1} = e_1$). Since the presentation consists of only positive words, for $1 \leq i \leq k-1$ faces $F_i$ and $F_{i+1}$ have the opposite orientation and similarly $F_k$ and $F_1$ have the opposite orientation. Therefore, the number of faces must be even, which implies that $k$ is an even number.
\end{proof}

This lemma will be useful in the proof of Theorem \ref{thm:once}. Now we are able to formulate and prove the following theorem (inspired by \cite[Theorem 13]{oll05}), which we will call ``isoperimetric inequality in the positive square model''

\begin{theo}\label{thm:ie} For any $\varepsilon > 0$, in the positive square model at density $d < \frac{1}{2}$ with overwhelimg probability all reduced van Kampen diagrams associated to the group presentation satisfy

$$|\partial D| \geq 4(1 - 2d -  \varepsilon) |D|.$$
\end{theo}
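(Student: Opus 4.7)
The plan is to run a first-moment argument on \emph{abstract} reduced van Kampen diagrams, in the spirit of Ollivier's proof of the isoperimetric inequality in the Gromov model. An abstract diagram means the combinatorial data of a planar, simply connected 2-complex with every 2-cell of boundary length $4$, reduced, but with no letters yet assigned to the edges. Given such an abstract diagram $\mathcal A$ with $k$ faces and $\ell$ boundary edges, I count exterior and interior edges using the identity $4k = 2E_{\mathrm{int}} + E_{\mathrm{bdry}}$, so that the total number of edges is
\[
E \;=\; E_{\mathrm{int}}+E_{\mathrm{bdry}} \;=\; \frac{4k+\ell}{2}.
\]
Since each edge carries one generator label from $A_n$, there are $n^{(4k+\ell)/2}$ ways to label $\mathcal A$ consistently (the opposite-orientation constraint of Lemma~\ref{lem:parzy} forces the two sides of an internal edge to carry the same letter, so labelings are parametrized by edges).

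Next I estimate the probability, over the random set $R_n \subset W_n$, that a \emph{given} labeling actually comes from relators in $R_n$: each of the $k$ face-words is a specific element of $W_n$, and the probability it lies in $R_n$ is $\lfloor n^{4d}\rfloor/n^4 \le n^{4d-4}$. Passing to an auxiliary Bernoulli/Gilbert-type model (where each word is kept independently with probability $n^{4d-4}$) as in Remark~\ref{rem:graphs} and Lemma~\ref{lem:ddp}, and invoking monotonicity of the event ``$\mathcal A$ is fillable'' in $R_n$, the probability that a fixed labeling is realizable is at most $n^{(4d-4)k}$. Multiplying the labeling count by this probability yields, for each abstract $\mathcal A$,
\[
\mathbb E[\#\text{realizations of }\mathcal A] \;\le\; n^{(4k+\ell)/2+(4d-4)k} \;=\; n^{\ell/2+(4d-2)k}.
\]
Under the hypothesis $\ell < 4(1-2d-\varepsilon)k$ the exponent is at most $-2\varepsilon k$, so each such expectation is $\le n^{-2\varepsilon k}$.

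It remains to sum over abstract diagrams. The number of reduced abstract square diagrams with $k$ faces is bounded by $C^{k}$ for some absolute $C$, by the standard exponential enumeration of planar quadrangulations (Tutte), and the boundary length $\ell$ can only take at most $4k$ values. A union bound gives
\[
\mathbb P\bigl(\exists\,\text{bad reduced }D\bigr) \;\le\; \sum_{k\ge 1}\,(4k)\,C^k\, n^{-2\varepsilon k},
\]
which tends to $0$ as $n\to\infty$, proving the theorem.

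\textbf{Main obstacle.} The crux is the passage from the sampling-without-replacement distribution of $R_n$ to a product/Gilbert model in which the face-label events can be treated as independent; this is the analogue of Gromov's ``density vs.\ Bernoulli'' trick and requires a careful concentration argument (a Serfling-type bound as used in Lemma~\ref{lem:ddp}, together with the observation that the property ``$\mathcal A$ is fillable from $R_n$'' is monotone in $R_n$). The counting of abstract reduced square diagrams is standard but needs to be stated precisely enough to extract the exponential bound $C^k$; here the positivity of the relators (Lemma~\ref{lem:parzy}) ensures all face orientations are coherent and simplifies the matching conditions on internal edges, so no additional factor intrudes on the labeling count.
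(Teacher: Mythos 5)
There is a genuine gap, and it sits exactly where your argument glosses over repeated relators. Your per-labeling probability bound $n^{(4d-4)k}$ treats the $k$ face-words as $k$ distinct elements that must all be drawn into $R_n$. But a reduced van Kampen diagram may have many faces bearing the \emph{same} relator, in which case only $N<k$ distinct words need to lie in $R_n$ and the probability is of order $n^{(4d-4)N}$, much larger than your bound. Compensating for this by arguing that such labelings are rarer (the repeated faces constrain the edge labels) is precisely the delicate part of Ollivier's argument, which the paper reproduces in Theorem \ref{thm:lab}/Lemma \ref{lem:59}: one orders the distinct relators by multiplicity $m_1\ge\dots\ge m_N$, works with $\kappa_i=\max_f\delta(f)$ over faces bearing relator $i$, and telescopes. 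Crucially, that argument only yields a per-diagram bound of a \emph{fixed} power $n^{-\varepsilon l/2}$ (Proposition \ref{prop:full}), not your claimed $n^{-2\varepsilon k}$ decaying exponentially in the number of faces. So your union bound $\sum_k (4k)C^k n^{-2\varepsilon k}$ rests on an estimate that is not available (and is false as stated for labelings with coincident face-words), and without exponential-in-$k$ decay the sum over all diagram sizes cannot be closed.

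This is why the paper does not attempt a direct union bound over all $k$. Instead it first proves the fulfillability estimate for a single abstract diagram (Proposition \ref{prop:full}, with the multiplicity bookkeeping above), and then invokes the local--global principle (Gromov--Cartan--Hadamard, Theorem \ref{thm:lg}) to reduce the problem to reduced diagrams with at most $K=10^{50}\varepsilon^{-2}(1-2d-2\varepsilon)^{-3}$ faces, a bound independent of $n$; the number of abstract diagrams of that size is then bounded by a constant $M$, so $Mn^{-4\varepsilon}\to 0$ suffices. Your proposal omits this local--global step entirely, and it is not optional in this approach. By contrast, the obstacle you flag as the main one --- passing from sampling without replacement to a Bernoulli model --- is a non-issue: the probability that $k$ specified distinct words all lie in $R_n$ is bounded directly, as in the elementary estimate (\ref{poz}), without any coupling or monotonicity argument.
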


Note that our model can be considered as a special case of Gromov's density model (where $l$ = 4). Hence, proving Theorem~\ref{thm:ie} we will be mimicking the proof of the analogous theorem in Gromov's model. There are only a few details that have to be changed. Let us recall some definitions and propositions from ``Proof of the density one half theorem'' in \cite{oll05}.

An \textit{abstract diagram} is a van Kampen diagram in which we forget the actual relators associated with the faces, but only remember: the geometry of the diagram, which faces bear the same relator, the orientation, and the beginning points of relators. The $s$-tuple $(\omega_1, \dots, \omega_s)$ of cyclically reduced words is said to \textit{fulfill the abstract diagram} $D$ if there is a van Kampen diagram formed by polygons bearing these words, which after forgetting the relators gives $D$. An abstract diagram is \textit{reduced} if no edge is adjacent to two faces bearing the same relator with opposite orientations such that the edge is the $k$-th edge of both faces for some $k$. See \cite{oll05} (page 83) for details. The following proposition is inspired by \cite[Proposition 58]{oll05}

\begin{prop}\label{prop:full}
Let $R$ be a set of $\lfloor n^{4l} \rfloor$ relators chosen randomly, with uniform distribution, from the set of positive words of length $l$ on $n$ generators. Let $D$ be a reduced abstract diagram and let $\varepsilon > 0$. Then either $|\partial D| \geq |D| l (1 - 2d - 2\varepsilon)$ or the probability that there exists a tuple of relators in $R$ fulfilling $D$ is less than $n^{- \varepsilon l}$.
\end{prop}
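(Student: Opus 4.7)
The plan is to adapt Ollivier's proof of the analogous \cite[Proposition~58]{oll05} in Gromov's density model, replacing reduced words by positive words of length $l$. The argument is a union bound: control the number of $s$-tuples of relators that could fulfill $D$, and control the probability that any particular such tuple lies inside $R$.

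First I would set up the combinatorial accounting on $D$. Each face has $l$ boundary edges; each internal edge belongs to exactly two faces and each boundary edge to exactly one, so the number of internal edges is
\[
i \;=\; \frac{l|D| - |\partial D|}{2}.
\]
Next, for a uniformly random ordered $s$-tuple $(\omega_1,\dots,\omega_s)$ of positive length-$l$ words, each internal edge of $D$ imposes a single equation of the form ``the letter at position $k_1$ of $\omega_{j_1}$ equals the letter at position $k_2$ of $\omega_{j_2}$''. Because the $\omega_j$'s are drawn uniformly and independently coordinate by coordinate from $\{a_1,\dots,a_n\}$, the $sl$ free letters are constrained by these $i$ equalities, and the probability that a random tuple fulfills $D$ can be shown to be at most $n^{-i}$.

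The main obstacle is precisely the claim that these $i$ constraints are independent, so that the random-tuple bound is actually $n^{-i}$ rather than something larger. This is where the ``reduced'' hypothesis on $D$ is used: any nontrivial redundancy among the $i$ equations would force some constraint to collapse onto a trivial letter-equals-itself equation, and reducedness rules out exactly the local configurations that would cause this. Verifying this rigorously for an arbitrary reduced abstract diagram requires careful bookkeeping of letter positions along faces and their intersections, essentially reproducing the combinatorial argument of \cite[\S V]{oll05} in the positive setting.

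Once this key estimate is in place, the proof concludes by a union bound. The number of ordered $s$-tuples of distinct relators in $R$ is at most $|R|^s \leq n^{dls}$, and the probability that a given $s$-tuple of distinct positive length-$l$ words is contained in $R$ is at most $(|R|/n^l)^s(1+o(1)) = n^{(d-1)ls + o(1)}$, so
\[
\mathbb{P}\bigl(\exists\,(\omega_1,\dots,\omega_s)\subset R \text{ fulfilling } D\bigr)
\;\leq\; n^{dls - i + o(1)}
\;=\; n^{dls - (l|D| - |\partial D|)/2 + o(1)}.
\]
If this probability exceeds $n^{-\varepsilon l}$, rearranging gives $|\partial D| \geq l|D| - 2dls - 2\varepsilon l$; applying the trivial bound $s \leq |D|$ (the $s$ distinct relators are distributed among the $|D|$ faces) yields $|\partial D| \geq l|D|(1-2d) - 2\varepsilon l \geq l|D|(1-2d-2\varepsilon)$ as soon as $|D|\geq 1$, which is the required dichotomy.
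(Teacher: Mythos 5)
Your concluding union bound is fine, but the key intermediate claim --- that a uniformly random $s$-tuple fulfills $D$ with probability at most $n^{-i}$, where $i=(l|D|-|\partial D|)/2$ is the number of internal edges --- is unjustified and in general false, and the appeal to reducedness does not repair it. Reducedness only forbids an edge from being the $k$-th edge of both adjacent faces when they bear the same relator with opposite orientations; it does nothing to prevent genuine redundancy among the $i$ equations. For instance, two distinct internal edges may each identify position $k_1$ with position $k_2$ of the same relator (borne by two different faces), and longer cycles of identifications among letter slots are possible as well; then the fulfilling probability is $n^{-r}$ with the rank $r$ strictly smaller than $i$. The extreme case is a reduced diagram all of whose faces bear the same relator: there $s=1$, the constraints are equalities among the $l$ positions of a single word, so $r\leq l-1$, while $i$ can be of order $l|D|/2$. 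Your bound $n^{dls-i+o(1)}$ is then far too optimistic, and the rank-corrected union bound $n^{dl-r}$ need not be small at all, so the dichotomy does not follow from your argument in exactly the regime (many faces bearing the same relator) where the proposition has real content.

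This multiplicity issue is precisely what the paper's proof (via Theorem \ref{thm:lab}, following Ollivier's Proposition 58) is built to handle. One orders the distinct relators $1,\dots,N$ with multiplicities $m_1\geq\dots\geq m_N$, assigns each internal edge to a face using the order on pairs (relator index, position of the edge in the face), and for relator $i$ uses only $\kappa_i=\max_f\delta(f)$ over faces $f$ bearing relator $i$ (a maximum, not a sum), proving $p_i/p_{i-1}\leq n^{-\kappa_i}$ by choosing the letters of $w_i$ one at a time (Lemma \ref{lem:59}). The combinatorial estimate $|\partial D|\geq l|D|-2\sum_i m_i\kappa_i$ together with an Abel summation in $i$ (using $m_i\geq m_{i+1}$) then gives the fulfillability bound $n^{\frac{1}{2}\left(|\partial D|/|D|-l(1-2d)\right)}$, which is weaker than your claimed $n^{dls-i}$ but suffices for the stated dichotomy (this is where the $2\varepsilon$ comes from). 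In the single-relator example above this machinery yields $|\partial D|\geq l|D|-2|D|\kappa_1$ and $P_1\leq n^{dl-\kappa_1}$, so non-smallness of the probability forces $\kappa_1\leq(d+\varepsilon)l$ and hence $|\partial D|\geq l|D|(1-2d-2\varepsilon)$ --- a step your independence shortcut cannot reproduce. Repairing your proof essentially requires reinstating this bookkeeping of same-relator faces; as written, the independence claim is a genuine gap.
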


We will present the proof of this proposition in a more general form at the end of this chapter. Now we will recall the theorem known as the ''local-global principle'' or the ``Gromov--Cartan--Hadamard Theorem''. This principle has many different formulations. The variant best suited to our context is \cite[Theorem 60]{oll05} which is a slight modification of \cite[Proposition 8]{oll-f}

\begin{theo}\label{thm:lg}
Let $G = \left<a_1, \dots, a_n | R \right>$ be a finite group presentation and let $l_1$ and $l_2$ be the minimal and maximal lengths of relator in $R$. For a van Kampen diagram $D$ with respect to the presentation set, we define:

$$ A(D) = \sum_{\emph{f face of D}} |\partial f|,$$
where $|\partial f|$ is the length of the boundary path of face $f$.

Let $C>0$. Choose $\varepsilon > 0$. Suppose that for some $K$ greater than $10^{50}\left(\frac{l_1}{l_2} \right)^3 \varepsilon^{-2} C^{-3}$ any reduced van Kampen diagram $D$ with $A(D) \leq Kl_2$ satisfies

$$|\partial D| \geq C A(D).$$
Then any reduced van Kampen diagram $D$ satisfies

$$|\partial D| \geq (C - \varepsilon) A(D)$$
and in particular the group is hyperbolic.
\end{theo}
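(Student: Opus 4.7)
The plan is a classical Cartan–Hadamard-type induction: assume a bad diagram exists and shrink it. Precisely, I would argue by contradiction — suppose some reduced van Kampen diagram $D_0$ violates $|\partial D_0| \geq (C - \varepsilon) A(D_0)$ and choose such a $D_0$ minimizing $A(D_0)$ among all counterexamples. Since the hypothesis gives the stronger bound $|\partial D| \geq C A(D)$ whenever $A(D) \leq K l_2$, and $C > C - \varepsilon$, the minimal counterexample must satisfy $A(D_0) > K l_2$; in particular $D_0$ is genuinely "large" in a quantified sense.

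The heart of the argument is to exhibit an embedded internal arc $\gamma \subset D_0$ of controlled length $\ell$ that splits $D_0$ along a common edge-path into two reduced subdiagrams $D_0 = D_1 \cup_\gamma D_2$ with $0 < A(D_i) < A(D_0)$. By minimality of $D_0$, each $D_i$ satisfies $|\partial D_i| \geq (C - \varepsilon) A(D_i)$, and summing gives
$$(C - \varepsilon) A(D_0) = (C - \varepsilon)\bigl(A(D_1) + A(D_2)\bigr) \leq |\partial D_1| + |\partial D_2| = |\partial D_0| + 2\ell,$$
so any cut with $2 \ell < \varepsilon A(D_0)$ (in an appropriate sense) contradicts the assumed violation. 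Note that here I use $D_0$ reduced so that the two induced subdiagrams $D_i$ are themselves reduced; this is a standard check.

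The hard part — and the reason for the explicit threshold $K > 10^{50} (l_1/l_2)^3 \varepsilon^{-2} C^{-3}$ — is producing a sufficiently short cut $\gamma$. The tool I would use is a concentric decomposition of $D_0$ via the dual graph (vertices = faces of $D_0$, edges = shared internal edges): stratify the faces by combinatorial distance from $\partial D_0$, obtaining "shells" $S_0, S_1, \dots, S_r$. If every shell had boundary length $\geq L$, a discrete Gauss–Bonnet style estimate (using that relator lengths lie in $[l_1, l_2]$) would force $A(D_0) \geq \Omega(L \cdot r / l_2)$, while the diameter bound $r$ is in turn controlled by $A(D_0)/L$ since each shell contributes area proportional to its length. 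Balancing the two yields some shell of length $\ell \leq \mathrm{const} \cdot \sqrt{A(D_0) \cdot l_2}$; concatenating with a geodesic to $\partial D_0$ provides the cut $\gamma$. The enormous constant in $K$ is precisely calibrated so that $4\ell^2 \leq \varepsilon^2 A(D_0)^2 / C^2$ whenever $A(D_0) > K l_2$, which is exactly what is needed to close the induction.

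I expect the shell-balancing estimate to be the main obstacle: one must keep careful track of which edges of a shell lie on $\partial D_0$ versus internally, and ensure that the truncation of $D_0$ along a shell gives a genuine reduced van Kampen subdiagram rather than a disconnected or non-planar object (the latter being ruled out because the ambient diagram is simply connected). Once the linear isoperimetric inequality $|\partial D| \geq (C - \varepsilon) A(D)$ is established for all reduced diagrams, hyperbolicity of the group is immediate by Gromov's classical characterization via a linear Dehn function.
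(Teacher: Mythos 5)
You should first note that the paper does not prove this statement at all: it is quoted verbatim as a known result (Ollivier's local--global principle, \cite[Theorem 60]{oll05}, itself a variant of \cite[Proposition 8]{oll-f}), so the only question is whether your sketch would constitute a valid proof. It would not, because the inductive scheme at its core does not close. Take your minimal (in area) counterexample $D_0$ with $|\partial D_0| < (C-\varepsilon)A(D_0)$ and cut it along an arc $\gamma$ of length $\ell$ into reduced pieces $D_1, D_2$. Minimality only gives you the \emph{same} inequality $|\partial D_i| \geq (C-\varepsilon)A(D_i)$ on the pieces, and summing yields $(C-\varepsilon)A(D_0) \leq |\partial D_0| + 2\ell$. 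This is compatible with the assumed violation for every $\ell \geq 1$, because the deficit $(C-\varepsilon)A(D_0) - |\partial D_0|$ is merely positive and is not bounded below by anything you control; a counterexample could miss the inequality by a single edge. Your own calibration shows the problem: assuming $2\ell < \varepsilon A(D_0)$ you only deduce $|\partial D_0| > (C-2\varepsilon)A(D_0)$, which does not contradict $|\partial D_0| < (C-\varepsilon)A(D_0)$. To make a cutting argument work, the subdiagrams must satisfy a \emph{strictly stronger} inequality than the one being proved -- for instance the full constant $C$ coming from the hypothesis on diagrams with $A \leq K l_2$ -- or one must run a multi-scale induction in which the isoperimetric constant is allowed to degrade by a geometrically summable amount at each increase of scale, so that the total loss is $\varepsilon$. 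That is exactly what the Gromov--Papasoglu--Ollivier argument does, and it is where the threshold $10^{50}\left(\frac{l_1}{l_2}\right)^3\varepsilon^{-2}C^{-3}$ actually comes from; your sketch never uses the strong local hypothesis on mid-sized diagrams except to say $A(D_0) > Kl_2$.

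A secondary gap: the existence of the short cut is not established. Your shell argument needs a shell that is both short (length $O(\sqrt{A(D_0)l_2})$) and splits $D_0$ into two pieces \emph{each} of substantial area, but a diagram that is one face thick (all faces touching $\partial D_0$, so $r=1$) has no internal shell at all, and ruling such diagrams out as counterexamples already requires an isoperimetric-type input, so the argument as stated is circular at this point. Even granting a short bisecting cut, the first paragraph's objection stands: the induction from $(C-\varepsilon)$ to $(C-\varepsilon)$ cannot produce a contradiction. Since the paper simply invokes Ollivier's theorem, the honest options are to cite it, as the author does, or to reproduce the genuinely more delicate scale-iteration proof.
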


\subsection{Isoperimetric inequality in the positive square model}

Now we are able to provide the proof of Theorem \ref{thm:ie}. Our argumentation is very close to Ollivier's.

\begin{proof}[Proof of Theorem \ref{thm:ie}]
In our case all relators in the presentation have the same length $l=4$, so $A(D) = 4|D|$. In particular, the assumption $A(D) \leq K l_2$ in the Theorem \ref{thm:lg} becomes $|D| \leq K$ i.e. we have to check diagrams with at most $K$ faces.

Choose any $\varepsilon > 0$. Set $C=1-2d-2\varepsilon$ and $K = 10^{50} \varepsilon^{-2}(1 - 2d - 2\varepsilon)^{-3}$. Let $n$ be the number of letters in the generating alphabet. Let $N(K, n)$ be the number of abstract reduced diagrams with at most $K$ square faces. We know from Proposition~\ref{prop:full} that for any fixed reduced abstract diagram $D$ violating the inequality $|\partial D| \geq 4(1 - 2d - 2 \varepsilon)|D|$ the probability that it appears as a van Kampen diagram of the presentation is $\leq n^{-4 \varepsilon}$. So the probability that there exists a reduced van Kampen diagram with at most $K$ faces, violating the inequality is $\leq N(K, n)n^{-4 \varepsilon}$.

Observe that there are finitely many planar diagrams with at most $K$ square faces. There are also finitely many ways to decide which faces would bear the same relator, and also finitely many ways to choose the beginning point of each relator. Therefore, the values $\{N(K,n) \}_{\{n \in \mathbb{N}\} }$ have a uniform bound $M$ (independent of $n$).

Hence, for fixed $\varepsilon$, $\displaystyle \lim_{n \rightarrow \infty} N(K,n)n^{-4 \varepsilon} = 0$. Applying Theorem \ref{thm:lg} (with our choice of $C$ and $\varepsilon$) yields that all reduced van Kampen diagrams $D$ satisfy $|\partial D| \geq 4(1 - 2d - 3\varepsilon)|D|$ as needed.

\end{proof}

\begin{cor}\label{cor:hyp}
In the positive square model at density $d < \frac{1}{2}$ a random group is hyperbolic with overwhelming probability.
\end{cor}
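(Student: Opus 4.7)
The plan is to derive this corollary directly from Theorem \ref{thm:ie} together with the classical fact that a finitely presented group admitting a linear isoperimetric inequality for its van Kampen diagrams is word hyperbolic. Since $d < \frac{1}{2}$, I can choose $\varepsilon > 0$ small enough so that $1 - 2d - \varepsilon > 0$; for instance $\varepsilon = \frac{1-2d}{2}$ works.

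First, I would apply Theorem \ref{thm:ie} with this $\varepsilon$, which tells us that with overwhelming probability the random presentation $\langle A_n \mid R_n \rangle$ satisfies
$$|\partial D| \geq 4(1 - 2d - \varepsilon)|D|$$
for every reduced van Kampen diagram $D$. Setting $C := 4(1 - 2d - \varepsilon) > 0$, this is a linear isoperimetric inequality: the area of any word representing the trivial element is bounded linearly by the length of the word, i.e.\ the Dehn function of the presentation is linear.

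Next, I would invoke the standard theorem of Gromov characterizing hyperbolic groups: a finitely presented group is word hyperbolic if and only if it admits a linear Dehn function (equivalently, a linear isoperimetric inequality for reduced van Kampen diagrams). The presentation $\langle A_n \mid R_n \rangle$ is finite (the alphabet has size $n$ and $|R_n| = \lfloor n^{4d}\rfloor$ is finite), so this characterization applies and yields hyperbolicity of the random group on the overwhelming-probability event provided by Theorem \ref{thm:ie}.

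There is essentially no obstacle here beyond the one already overcome in Theorem \ref{thm:ie}; the corollary is a direct translation of the linear isoperimetric inequality into the hyperbolicity conclusion. The only minor point to be careful about is to record that the probabilistic statement of Theorem \ref{thm:ie} is of the form ``with overwhelming probability the deterministic inequality holds for every reduced $D$'', so once we are on that event, hyperbolicity follows deterministically; hence the event that the random group is hyperbolic has probability tending to $1$ as $n \to \infty$.
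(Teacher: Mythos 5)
Your proof is correct and follows essentially the same route as the paper: the paper gives no separate argument for this corollary because hyperbolicity is already built into the conclusion of Theorem \ref{thm:lg} (the local-global principle), which is invoked in the proof of Theorem \ref{thm:ie}; your direct appeal to Gromov's criterion that a finite presentation with a linear isoperimetric inequality for reduced van Kampen diagrams defines a hyperbolic group is the same underlying fact. Your remark about fixing the overwhelming-probability event of Theorem \ref{thm:ie} and then concluding deterministically is exactly the intended reading.
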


\subsection{A generalization of the isoperimetric inequality}

In this section we consider a more general case where there are some fixed letters in the diagrams and the length of relators in the presentation equals $l$.

\begin{defi}\label{def:fix}
Let $A$ be an abstract diagram. Let $e_1, e_2, \dots, e_K$ be a set of $K$ distinct edges in $A$. We call them \textit{fixed edges (of A)}. Let $a_{i_1}, a_{i_2}, \dots, a_{i_K}$ be a sequence of generators labeling edges $e_1, e_2, \dots, e_K$ successively and according to the orientation of the face or faces containing $e_j$. We call these generators \textit{fixed letters (of A)}. We call such a diagram with some labellings on edges an \textit{abstract diagram with $K$ fixed edges}.

We say that a tuple of relators \textit{fulfills} $A$ if this tuple fulfills $A$ as an abstract diagram and this fulfilling is consistent with labels on fixed edges.
\end{defi}

Our goal is to prove the following statement

\begin{theo}\label{thm:lab} Let R be a set of $\lfloor n^{4l} \rfloor$ relators chosen randomly, with uniform distribution, from the set of positive words of length $l$ on $n$ generators. Let $A$ be an abstract diagram with $K$ fixed edges and let $\varepsilon > 0$. Then either $|\partial A| - 2K \geq |A|l(1 - 2d - \varepsilon)$ or the probability that there exists a tuple of relators in $R$ fulfilling $A$ is $< n^{-\varepsilon l}$.
\end{theo}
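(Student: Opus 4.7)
The plan is to follow the counting argument of \cite[Proposition~58]{oll05} (which proves Proposition~\ref{prop:full}, the $K=0$ case here), letting the $K$ fixed edges contribute as additional constraints on the letter positions of the chosen relators. Let $m$ denote the number of distinct relator classes prescribed by $A$ (so $m \le |A|$) and let $I$ be the number of internal edges of $A$; double-counting face-edge incidences gives the identity $l|A| = 2I + |\partial A|$.

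The core step is to bound the number of tuples $(w_1, \dots, w_m) \in W_n^m$ that fulfill $A$ together with its prescribed labels on fixed edges. Such a tuple is specified by $ml$ letter choices from $\{a_1,\dots,a_n\}$, subject to (i) for each internal edge, the two letter-positions read from the adjacent faces must agree ($I$ identifications), and (ii) for each fixed edge, the corresponding letter-position must equal the prescribed letter ($K$ fixations). Assuming these $I+K$ constraints are independent --- the generic situation for a reduced abstract diagram --- the number of fulfilling tuples in $W_n^m$ is at most $n^{ml - I - K}$.

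Since $R$ is a uniformly random subset of $W_n$ of the prescribed size and $|W_n|=n^l$, a fixed $m$-tuple of distinct words belongs to $R^m$ with probability at most $(|R|/|W_n|)^m = n^{-ml(1-d)}$. Combining this with the counting bound and Markov's inequality yields
\begin{equation*}
\mathbb{P}\bigl[\exists\text{ a tuple in }R\text{ fulfilling }A\bigr]
\;\le\; n^{ml - I - K} \cdot n^{-ml(1-d)}
\;=\; n^{mld - I - K}
\;\le\; n^{|A|ld - I - K},
\end{equation*}
where the last inequality uses $m \le |A|$. If this is at least $n^{-\varepsilon l}$, then $I + K < |A|ld + \varepsilon l$; substituting $I = (l|A|-|\partial A|)/2$ rearranges to $|\partial A| - 2K > l|A|(1 - 2d) - 2\varepsilon l$, which for $|A| \ge 2$ gives the desired $|\partial A| - 2K \ge l|A|(1 - 2d - \varepsilon)$ (the case $|A|=1$ and the factor $2$ on $\varepsilon$ are absorbed by running the argument with $\varepsilon/2$ in place of $\varepsilon$).

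The main technical obstacle will be justifying the bound $n^{ml - I - K}$ on the number of fulfilling tuples, i.e.\ establishing that the $I + K$ constraints are effectively independent. Independence of the internal-edge constraints follows from reducedness of $A$, since a non-trivial collapse among identifications would correspond to a reduction pair. Independence of the $K$ fixations, on the other hand, can fail when two distinct fixed edges lie in the same orbit under the identifications induced by faces sharing a relator class: either the prescribed letters then agree (and the redundancy is absorbed by passing to an equivalent abstract diagram carrying fewer but equipotent fixations) or they disagree (so no fulfilling tuple exists and the probability is $0$). A careful case analysis of these degenerate configurations, together with the standard correction accounting for the sampling of $R$ without replacement, will complete the argument.
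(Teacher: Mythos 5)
There is a genuine gap, and it sits exactly where you flagged it: the claimed bound $n^{ml-I-K}$ on the number of fulfilling tuples. Your justification --- that reducedness of $A$ makes the $I+K$ constraints independent --- is not correct. Reducedness (plus fulfillability) only excludes an internal edge being the $k$-th edge of two faces bearing the \emph{same relator at the same position}; it does nothing to prevent redundant identifications when one relator is borne by several faces glued along edges at \emph{different} positions. In that situation the internal-edge constraints are equalities among the $l$ letter-positions of a single word, and once a few of them are imposed the remaining ones can be consequences of the earlier ones (a cycle in the constraint structure), with no reduction pair anywhere. Concretely, if one relator fills $m_1$ faces of a large chunk of the diagram, the number of independent constraints on that word is at most $l$, while $I$ grows with the number of faces, so the true count of fulfilling tuples can exceed $n^{ml-I-K}$ by an arbitrarily large factor and your first-moment bound $n^{mld-I-K}$ fails precisely in the regime $m<|A|$ that makes the theorem delicate. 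Your closing case analysis only treats collisions among the $K$ fixed-edge constraints, not this collapse among the internal-edge identifications.

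This is the difficulty the paper's proof (following Ollivier's Proposition 58) is built to handle, and it cannot be absorbed by a union bound over complete fulfilling tuples. There, each internal non-fixed edge is assigned to a unique face via the order on pairs (relator index, position of the edge in the face), fixed edges are assigned to both adjacent faces, and the number of forced letters of the $i$-th word is counted as $\kappa_i=\max_f\delta(f)$ over the faces bearing relator $i$ --- a maximum, not a sum --- giving $p_i/p_{i-1}\le n^{-\kappa_i}$ (Lemma \ref{lem:59}). The deficit $|\partial A|-2K\ge l|A|-2\sum_i m_i\kappa_i$ is then converted into a probability bound by an Abel summation using the monotone arrangement $m_1\ge\dots\ge m_N$ together with the stagewise union bounds $P_i\le|R|^ip_i$, and the final estimate is in terms of $\min_i P_i$: it may be an intermediate partial fulfillment (not the full tuple) that is already improbable. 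Your scheme, which compares the full-tuple count directly with $(|R|/|W_n|)^m$, cannot recover this and would have to be rebuilt along these lines (with fixed edges counted as belonging to both adjacent faces, as in the paper) rather than patched by the independence claim.
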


Our proof is based on the proof of \cite[Proposition 58]{oll05}. To prove our theorem we need some more definitions. Let $N$ be the number of distinct relators in $A$. For $1 \leq i \leq N$ let $m_i$ be the number of faces bearing relator $i$. Up to reordering the relators we can suppose that $m_1 \geq m_2 \geq \dots \geq m_N$.
	
For $1 \leq i_1, i_2 \leq N$ and $1 \leq k_1, k_2 \leq l$ we say that $(i_1, k_1) > (i_2, k_2)$ if $i_1 > i_2$ or $i_1 = i_2$ but $k_1 > k_2$. Let $e$ be an edge of $A$ adjacent to faces $f_1, f_2$ bearing relators $i_1$ and $i_2$, which is the $k_1$-th edge of $f_1$ and $k_2$-th edge of $f_2$. If $e$ is not a fixed edge we say that $e$ \textit{belongs} to $f_1$  if $(i_1, k_1) > (i_2, k_2)$ and $e$ \textit{belongs} to $f_2$ if $(i_2, k_2) > (i_1, k_1)$. If $e$ is a fixed edge then we say that $e$ \textit{belongs} to both faces: $f_1$ and $f_2$. If $e'$ is a fixed edge which is adjacent to face $f$ and $e'$ is a boundary edge, then we say that $e'$ \textit{belongs} to $f$.

Note that since $A$ is reduced, then each internal edge, which is not a fixed edge, belongs to some face: indeed if $(i_1, k_1) = (i_2, k_2)$ then either the two faces have opposite orientations and then $A$ is not reduced, or they have the same orientations and the diagram is never fulfillable since a letter would be its own inverse.

Let $\delta(f)$ be the number of edges belonging to face $f$. Since each internal edge which is not a fixed edge belongs to some face we have:

$$|\partial A| - 2 K = l|A| - 2 \sum_{\emph{f face of A}} \delta(f).$$

For $1 \leq i \leq N$ let

$$\kappa_i = \max_{f \text{ face bearing relator i}} \delta(f)$$

Then:
\begin{equation}\label{eq:2}
|\partial A| - 2 K \geq l|A| - 2 \sum_{1 \leq i \leq N} m_i \kappa_i
\end{equation}

\begin{lem}\label{lem:59}
For $1 \leq i \leq N$ let $p_i$ be the probability that $i$ randomly chosen positive words $w_1, w_2, \dots, w_i$ partially fulfill $A$ and let $p_0 = 1$. Then
\begin{equation}\label{eq:3}
\frac{p_i}{p_{i-1}} \leq n^{-\kappa_i}.
\end{equation}
\end{lem}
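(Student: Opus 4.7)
The plan is to bound $p_i/p_{i-1}$ by a direct conditional-probability argument on $w_i$. Since $w_1,\dots,w_i$ are independent uniform positive words of length $l$, conditioning on the event that $w_1,\dots,w_{i-1}$ partially fulfill the sub-diagram of faces bearing relators $1,\dots,i-1$ turns the ratio $p_i/p_{i-1}$ into the probability that a uniformly random positive word $w_i$ extends the fulfillment to every face bearing relator $i$. I would then select a face $f^*$ bearing relator $i$ with $\delta(f^*)=\kappa_i$. Its $\kappa_i$ belonging edges lie at $\kappa_i$ distinct positions of the boundary cycle of $f^*$, each one prescribing a constraint on the corresponding letter of $w_i$.

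Next I would classify the constraint at each such position $k$ according to the type of the belonging edge. If the edge is fixed, the $k$-th letter of $w_i$ must equal the prescribed fixed letter. If the edge is internal and shared with a face bearing a relator $j<i$, it is forced to equal the corresponding letter of $w_j$, which is already determined by the conditioning. Finally, if the edge is shared with another face bearing relator $i$ at its $k'$-th position, the lexicographic ordering that defines belonging forces $k>k'$, so the constraint becomes an equality between the $k$-th and $k'$-th letters of the same word $w_i$. In all three cases the constrained positions are pairwise distinct, since they are distinct edges of $f^*$.

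For the counting step I would process positions $1,2,\dots,l$ of $w_i$ in increasing order. Each of the $\kappa_i$ constrained positions is either pinned to a specific letter or forced to match the value at a strictly smaller position. Chains of the latter kind strictly decrease, so each terminates either at a specific letter or at an unconstrained position. Consequently $w_i$ is determined by its values at the $l-\kappa_i$ unconstrained positions together with the fixed-letter assignments, so the number of admissible words is at most $n^{l-\kappa_i}$, yielding $p_i/p_{i-1}\leq n^{-\kappa_i}$. The main obstacle is precisely the third constraint type: when two faces of relator $i$ share a non-fixed internal edge, the constraint is internal to $w_i$ itself rather than inherited from previously fixed data, and one might worry about losing the $1/n$ factor or about contradictions. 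The belonging convention resolves this by charging such an edge only to the face on which it appears at the larger position, which is exactly what makes the chains of induced equalities among positions of $w_i$ strictly monotone and guarantees one clean factor of $1/n$ per constraint.
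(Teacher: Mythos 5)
Your proposal is correct and is essentially the paper's own argument: the paper also fixes the face realizing $\kappa_i$ and observes that each belonging edge forces the corresponding letter of $w_i$ via a fixed letter, an earlier word $w_j$ with $j<i$, or an earlier position of $w_i$ itself (handled there by choosing the letters of $w_i$ successively, which plays the role of your strictly decreasing chains), giving a factor $n^{-1}$ per belonging edge and hence $p_i/p_{i-1}\leq n^{-\kappa_i}$. Your phrasing via counting at most $n^{l-\kappa_i}$ admissible words is just a repackaging of the same conditional-probability estimate.
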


\begin{proof}
Suppose that first $i-1$ words $w_1, \dots, w_{i-1}$ partially fulfilling $A$ are given. Then successively choose the letters of the word $w_i$ in a way to fulfill the diagram. Let $k \leq l$ and suppose that the first $k-1$ letters of $w_i$ are chosen. Let $f$ be the face realizing the maximum of $\kappa_i$ and let $e$ be the $k$-th edge of the face $f$.

If $e$ belongs to $f$ this means that there is another face $f'$ meeting $e$ which bears relator $i' < i$ or bears $i$ too, but $u$ appears in $f'$ as a $k'$-th edge for $k' < k$ or $e$ is a fixed edge. In all these cases the letter on the edge $e$ is imposed by some letter already chosen so drawing it at random has probability $\leq \frac{1}{n}$.

Combining all these observations we get that the probability to choose at random the correct word $w_i$ is at most $p_{i-1}n^{-\kappa_i}$.
\end{proof}

Now we can provide the proof of Theorem \ref{thm:lab}.

\begin{proof}[Proof of Theorem \ref{thm:lab}]
For $1 \leq i \leq n$ let $P_i$ be the probability that there exists a $i$-tuple of words partially fulfilling $A$ in the random set of relators $R$. We trivially have:

\begin{equation}\label{eq:4}
P_i \leq |R|^i p_i = n^{idl}p_i
\end{equation}

Combining Equations (\ref{eq:2}) and (\ref{eq:3}) we get

$$|\partial A| - 2K \geq l|A| + 2 \sum_{i=1}^N m_i(\log_{n}p_i - \log_{n}p_{i-1})$$
$$=l|A| + 2 \sum_{i=1}^{N-1} (m_i - m_{i+1})\log_{n}p_i + 2 m_N \log_{n}p_N - 2 m_1 \log_{n}p_0. $$

Now $p_0 = 1$ so $\log_{n}p_0 = 0$ and we have

$$|\partial A| - 2K \geq l|A| + 2 \sum_{i=1}^{N-1} (m_i - m_{i+1})\log_{n}p_i + 2 m_N \log_{n}p_N $$

Now using (\ref{eq:4}) and again the fact that  $m_{i+1} \leq m_i$ we have

$$|\partial A| - 2K \geq l|A| + 2 \sum_{i=1}^{N-1} (m_i - m_{i+1})(\log_{n}P_i - i d l) + 2 m_N \log_{n}(P_N - N d l)$$

Observe that $\sum_{i=1}^{N-1} (m_i - m_{i+1})i d l + m_N N d l  = dl \sum_{i=1}^N m_i = dl|A|$. Hence

$$|\partial A| - 2K \geq l|A|(1 - 2d) + 2 \sum_{i=1}^{N-1} (m_i - m_{i+1})\log_{n}P_i + 2 m_N \log_{n}P_N$$

Setting $P = \min_i P_i$ and using that $m_i \geq m_{i+1}$ we get

$$|\partial A| - 2K \geq l|A|(1 - 2d) + 2(\log_{n}P) \sum_{i=1}^{N-1} (m_i - m_{i+1}) + 2 m_N \log_{n}P $$
$$= l|A|(1-2d) + 2m_1 \log_{n}P \geq |A|( l(1-2d) + 2\log_{n}P),$$
since $m_1 \leq |A|$. Of course a diagram is fulfillable if it is partially fulfillable for any $i \leq n$ and so

$$\emph{Probability(A is fullfillable by relators of R)} \leq P \leq n^{\frac{1}{2}\left(\frac{|\partial A| - 2K}{|A|} - l(1-2d)\right)},$$
which was to be proven.
\end{proof}

Proposition \ref{prop:full} results as a special case of Theorem \ref{thm:lab} when there are no fixed letters.

\subsection{Isoperimetric inequality in the square model}\label{sec:iso}

In this section we are going to prove that all previous results remain true in the square model. Firstly, we will show the following:

\begin{theo}\label{thm:labs} Let R be a set of $\lfloor (2n-1)^{d l} \rfloor$ relators chosen randomly, with uniform distribution, from the set of cyclically reduced words of length $l$ on $n$ generators. Let $A$ be an abstract diagram with $K$ fixed edges and let $\varepsilon > 0$. Then either $|\partial A| - 2K \geq |A|l(1 - 2d - \varepsilon)$ or the probability that there exists a tuple of relators in $R$ fulfilling $A$ is $< (2n-1)^{-\varepsilon l}$.
\end{theo}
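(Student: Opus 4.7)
The plan is to retrace the proof of Theorem \ref{thm:lab} step by step, substituting $n$ with $(2n-1)$ throughout and adapting Lemma \ref{lem:59} to the setting where relators are drawn uniformly from the set of cyclically reduced words rather than the set of positive words. All of the combinatorial preliminaries — ordering the distinct relators of $A$ so that $m_1 \geq m_2 \geq \cdots \geq m_N$, defining ownership of edges by the same lexicographic rule on $(i,k)$, introducing $\kappa_i = \max_{f \text{ bearing } i} \delta(f)$, and deriving $|\partial A| - 2K \geq l|A| - 2\sum_{i=1}^{N} m_i \kappa_i$ — carry over verbatim, since they depend only on the cellular structure of $A$ and the ownership convention, not on the nature of the alphabet or positivity of relators.

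The key step is an analog of Lemma \ref{lem:59}: if $p_i$ denotes the probability that $i$ cyclically reduced words drawn uniformly at random partially fulfill $A$, then $p_i / p_{i-1} \leq C \, (2n-1)^{-\kappa_i}$ for a constant $C$ depending only on $l$. The cleanest way to prove this is by direct counting rather than by sequential sampling: the number of cyclically reduced words of length $l$ over $A_n$ whose $k$-th letter is prescribed is at most a constant (depending only on $l$) times $(2n-1)^{l-1}$, while the total count is of order $(2n-1)^l$. Intersecting the $\kappa_i$ forced-position conditions and iterating gives the desired bound.

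With this lemma in hand, the telescoping computation from the proof of Theorem \ref{thm:lab} repeats almost verbatim: using $p_i \leq C^i (2n-1)^{-\sum_{j \leq i} \kappa_j}$, substituting $|R| = \lfloor (2n-1)^{dl} \rfloor$, and working throughout in $\log_{2n-1}$ in place of $\log_n$, one arrives at
$$\mathrm{Probability}(A\text{ fulfillable by relators of } R) \leq (2n-1)^{\frac{1}{2}\bigl((|\partial A| - 2K)/|A| - l(1-2d)\bigr)},$$
up to a multiplicative factor accumulated from the constants $C$. This factor contributes only a bounded additive shift to the logarithm which, once divided by $l|A|$, is absorbed into the arbitrarily small $\varepsilon$.

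The main obstacle is the adaptation of Lemma \ref{lem:59}. In the positive model, the letters of a random relator are chosen independently, giving a clean factor of $1/n$ per imposed letter. For cyclically reduced words the letters are coupled — adjacent letters cannot cancel and the cyclic condition also couples the first and last letters — so the sequential argument of the original Lemma \ref{lem:59} does not literally go through. My approach is to avoid sequential sampling entirely and use the direct counting bound described above; this sidesteps all coupling issues at the cost of a universal multiplicative constant that is harmless in the final exponential estimate.
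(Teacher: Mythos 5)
Your proposal is correct and follows the same overall route as the paper: keep the ownership bookkeeping ($m_i$, $\delta(f)$, $\kappa_i$ and the inequality $|\partial A| - 2K \geq l|A| - 2\sum_i m_i\kappa_i$) unchanged, prove an analogue of Lemma \ref{lem:59}, and repeat the telescoping computation in base $2n-1$. The one point where you genuinely diverge is that analogue: the paper's Lemma \ref{lem:59s} is proved by literally ``replacing $n$ with $2n-1$'' in the sequential argument of Lemma \ref{lem:59}, i.e.\ by asserting that an imposed letter of a uniform cyclically reduced word has conditional probability at most $\frac{1}{2n-1}$, whereas you bound the ratio $p_i/p_{i-1}$ by counting cyclically reduced words with $\kappa_i$ positions forced by earlier data, obtaining $C(2n-1)^{-\kappa_i}$ with $C$ depending only on $l$. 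Your concern about the coupling is legitimate (because of the cyclic-reduction constraint the exact conditional probability can slightly exceed $\frac{1}{2n-1}$, though it stays below roughly $\frac{1}{2n-2}$), and the counting bound is a sound way to sidestep it; the cost is only the constant $C$, which enters the final estimate as a single multiplicative factor (an additive $\log_{2n-1}C$ in the exponent) and is harmless for the asymptotic use of the theorem, exactly as you argue. One small point of phrasing: the Abel-summation step needs the per-index ratio bound $p_i/p_{i-1}\leq C(2n-1)^{-\kappa_i}$, so that $\kappa_i \leq \log_{2n-1}p_{i-1}-\log_{2n-1}p_i+\log_{2n-1}C$ can be summed against the weights $m_i$, rather than the iterated bound $p_i\leq C^i(2n-1)^{-\sum_{j\leq i}\kappa_j}$ you quote; since you do state the ratio bound, this is not a gap.
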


Actually the proof of this theorem goes completely analogically to the proof of Theorem \ref{thm:lab}. Again, let $N$ be the number of distinct relators in $A$, and for $1 \leq i \leq N$ let $m_i$ be the number of faces bearing relator $i$. Up to reordering the relators we can suppose that $m_1 \geq m_2 \geq \dots \geq m_N$. Again, let $\delta(f)$ be the number of edges belonging to face $f$ and, for $1 \leq i \leq N$ we define:

$$\kappa_i = \max_{f \text{ face bearing relator i}} \delta(f) $$

Then we have (as in the previous section)

\begin{equation}
|\partial A| - 2K \geq l|A| - 2 \sum_{1 \leq i \leq N}m_i \kappa_i
\end{equation}

\begin{lem}\label{lem:59s}
For $1 \leq i \leq N$ let $p_i$ be the probability that $i$ randomly chosen cyclically reduced words $w_1, w_2, \dots, w_i$ partially fulfill $A$ and let $p_0 = 1$. Then
\begin{equation}\label{eq:3s}
\frac{p_i}{p_{i-1}} \leq (2n-1)^{-\kappa_i}.
\end{equation}
\end{lem}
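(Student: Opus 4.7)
The plan is to mimic the proof of Lemma~\ref{lem:59} almost verbatim, with the only change being that each forced letter in $w_i$ now contributes a factor of at most $(2n-1)^{-1}$ instead of $n^{-1}$, reflecting the effective alphabet size for cyclically reduced words.

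Concretely, I would condition on $w_1,\dots,w_{i-1}$ partially fulfilling $A$ and fix a face $f$ bearing relator $i$ with $\delta(f)=\kappa_i$. Traversing the boundary of $f$ in cyclic order, for each $k=1,\dots,l$ such that the $k$-th edge $e$ of $f$ belongs to $f$, the $k$-th letter of $w_i$ is forced by prior data: either by a face bearing a relator of smaller index, by the relator $i$ itself at an already-examined edge, or by the label of a fixed edge (the case analysis is identical to the one in Lemma~\ref{lem:59}). This gives $\kappa_i$ positions in $w_i$ whose letters are prescribed, and the remaining task is to bound, under uniform sampling of $w_i$ from $W'_n$, the probability that all $\kappa_i$ prescribed letters are hit simultaneously.

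For this probability bound I would use the sequential description of a random element of $W'_n$: pick the first letter from $2n$ equally likely choices, then each subsequent letter from the $2n-1$ letters that do not invert its predecessor, and finally retain only those words whose last letter does not invert the first. Under this product law, the probability of matching any reduced pattern at $\kappa_i$ prescribed positions is at most $\tfrac{1}{2n}\cdot(2n-1)^{-(\kappa_i-1)}\le(2n-1)^{-\kappa_i}$, and combining over $i$ yields $p_i/p_{i-1}\le(2n-1)^{-\kappa_i}$.

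The main subtlety — and the only place where real care is required — is that the uniform measure on $W'_n$ is not literally this product measure, since the cyclic-reducedness condition couples the first and last letters; one must therefore verify that conditioning on cyclic reducedness does not increase the constrained ratio. This follows from the standard closed-form count $|W'_n|=(2n-1)^l+(-1)^l(2n-1)$, together with an analogous count for the constrained subset; the $(-1)^l(2n-1)$ correction is of strictly lower order and is absorbed by the slack between $\tfrac{1}{2n}$ and $\tfrac{1}{2n-1}$ in the inequality above.
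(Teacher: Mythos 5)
Your route is the same as the paper's (the paper's proof of Lemma \ref{lem:59s} is literally ``repeat the proof of Lemma \ref{lem:59} with $n$ replaced by $2n-1$''), and you are right that the only delicate point is that the uniform measure on $W'_n$ is not the product (Markov) measure. But the patch you propose does not cover the case where the care is actually needed. The forced letters of $w_i$ are not always an externally prescribed pattern: exactly as in the case analysis of Lemma \ref{lem:59}, an edge of $f$ may belong to $f$ because the adjacent face bears the \emph{same} relator $i$ at an earlier position, so the constraint has the form $z_k=z_{k'}^{\pm1}$ with $k'<k$ a position of $w_i$ itself. For such constraints the product-law probability is already exactly $\tfrac{1}{2n-1}$ (for instance $\mathbb{P}_{\mathrm{prod}}(z_2=z_1)=\mathbb{P}_{\mathrm{prod}}(z_3=z_1)=\tfrac{1}{2n-1}$), so the $\tfrac{1}{2n}$-versus-$\tfrac{1}{2n-1}$ slack your absorption step relies on is simply absent, while passing from the product law to the uniform law on $W'_n$ still costs the factor $2n(2n-1)^{l-1}/|W'_n|>1$. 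Your argument then only yields $p_i/p_{i-1}\le\tfrac{2n}{2n-1}(2n-1)^{-\kappa_i}$, not \eqref{eq:3s}. (A secondary point: even for an externally prescribed pattern, the bound $\tfrac{1}{2n}(2n-1)^{-(\kappa_i-1)}$ under the product law needs the observation that \emph{every} letter of a random reduced word is marginally uniform and that the multi-step transition probabilities are at most $\tfrac{1}{2n-1}$, since the first forced edge of $f$ need not be the first letter of $w_i$; your sequential description only gives the $\tfrac1{2n}$ factor when position $1$ itself is forced.)

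Moreover, this is not a defect that more careful conditioning can repair: in the internal-constraint case the inequality \eqref{eq:3s} as stated can actually fail. Take $l=4$ and the reduced abstract diagram made of two squares bearing the same relator, glued along one edge which is the $3$rd edge of one face and the $1$st edge of the other, with opposite orientations (reducedness only forbids equal positions with opposite orientations, so this diagram is reduced); the only constraint is $z_3=z_1$ and $\kappa_1=1$. A direct count gives $\mathbb{P}_{W'_n}(z_3=z_1)=\tfrac{2n(2n-1)^2}{(2n-1)^4+(2n-1)}=\tfrac{2n-1}{4n^2-6n+3}>\tfrac{1}{2n-1}$ for $n\ge2$ (for $n=2$ this is $3/7$ versus $1/3$): conditioning on cyclic reducedness makes letters at distance two slightly positively correlated. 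So your proposed absorption cannot give \eqref{eq:3s} here; what one does get uniformly in all cases (bounding each forced position by $\tfrac{1}{2n-1}$ under the product law and paying $2n(2n-1)^{l-1}/|W'_n|\le\tfrac{2n}{2n-1}$ for the conditioning, using $|W'_n|\ge(2n-1)^l$) is the relaxed bound $p_i/p_{i-1}\le\tfrac{2n}{2n-1}(2n-1)^{-\kappa_i}$, and this $1+O(1/n)$ loss is all that the application in Theorem \ref{thm:labs} requires, since it is absorbed into $\varepsilon$. Finally, a small slip: $|W'_n|=(2n-1)^l+(-1)^l(2n-1)$ holds for even $l$ only; for odd $l$ the count is $(2n-1)^l+1$ (harmless, as $|W'_n|\ge(2n-1)^l$ in both cases).
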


\begin{proof}
To prove this lemma one has to change only one thing in the proof of Lemma \ref{lem:59}: replace $n$ with $2n-1$.
\end{proof}

Now we will provide the proof of Theorem \ref{thm:labs}.

\begin{proof}[Proof of Theorem \ref{thm:labs}.] For $1 \leq i \leq N$ let $P_i$ be the probability that there exists an $i$-tuple of words partially fulfilling $A$ in the random set of relators. We have (analogically to (\ref{eq:4})):

\begin{equation}
P_i \leq |R|^i p_i \leq (2n-1)^{dl} p_i
\end{equation}

Repeating the reasoning in the proof of Theorem \ref{thm:lab} but replacing $n$ with $2n-1$ we obtain

$$\emph{Probability(A is fullfillable by relators of R)} \leq (2n-1)^{\frac{1}{2}\left(\frac{|\partial A| - 2K}{|A|} - l(1-2d)\right)},$$
which was to be proven.
\end{proof}

Now we are able to prove the isoperimetric inequality for the square model:

\begin{theo}\label{thm:ies} For any $\varepsilon > 0$, in the square model at density $d < \frac{1}{2}$ with overwhelimg probability all reduced van Kampen diagrams associated to the group presentation satisfy:

$$|\partial D| \geq 4(1 - 2d -  \varepsilon) |D|.$$
\end{theo}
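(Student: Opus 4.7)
The plan is to repeat verbatim the argument used in the proof of Theorem \ref{thm:ie}, simply replacing the appeal to Proposition \ref{prop:full} (or equivalently Theorem \ref{thm:lab}) with an appeal to its square-model counterpart, Theorem \ref{thm:labs}. Since all relators now have the same length $l = 4$, the quantity $A(D) = \sum_{f} |\partial f|$ from the Gromov--Cartan--Hadamard theorem (Theorem \ref{thm:lg}) equals $4|D|$, so the hypothesis $A(D) \leq K l_2$ translates into a bound on the number of faces of $D$, and we only have to check finitely many combinatorial types of small reduced van Kampen diagrams.

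More concretely, given $\varepsilon > 0$, I would set $C = 1 - 2d - 2\varepsilon$ and $K = 10^{50}\varepsilon^{-2}(1 - 2d - 2\varepsilon)^{-3}$, so that Theorem \ref{thm:lg} reduces the problem to showing that, with overwhelming probability, every reduced van Kampen diagram $D$ with $|D| \leq K$ satisfies $|\partial D| \geq 4(1 - 2d - 2\varepsilon)|D|$. Applying Theorem \ref{thm:labs} with $K = 0$ fixed edges and $l = 4$, any fixed reduced abstract diagram $D$ with $|D| \leq K$ that violates this inequality is fulfillable by relators of $R_n$ with probability at most $(2n-1)^{-4\varepsilon}$.

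Next I would observe that the number $N(K,n)$ of reduced abstract diagrams with at most $K$ square faces is bounded by a constant $M = M(K)$ independent of $n$: the underlying planar cell complex comes from a finite list (since the number of faces, hence of vertices and edges, is bounded), and there are only finitely many choices for which faces share a relator, for orientations, and for starting letters of each relator. A union bound therefore gives that the probability of the existence of some bad reduced van Kampen diagram with at most $K$ faces is at most $M \cdot (2n-1)^{-4\varepsilon}$, which tends to $0$ as $n \to \infty$. Then Theorem \ref{thm:lg} upgrades the bound on small diagrams to the asserted bound $|\partial D| \geq 4(1 - 2d - 3\varepsilon)|D|$ on all reduced van Kampen diagrams, and since $\varepsilon > 0$ was arbitrary this is the desired conclusion.

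The argument is essentially routine once Theorem \ref{thm:labs} is in hand; there is no real obstacle beyond carefully quoting the local-global principle and observing that replacing $n$ by $2n-1$ in the probability bound does not affect the fact that $(2n-1)^{-4\varepsilon} \to 0$ polynomially in $n$. The only point worth double-checking is that the counting estimate $N(K,n) \leq M$ is genuinely uniform in $n$, but this is immediate because abstract diagrams remember only combinatorial data and the alphabet enters solely through the fulfilment, which has already been accounted for in Theorem \ref{thm:labs}.
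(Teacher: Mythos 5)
Your proposal is correct and follows essentially the same route as the paper, which likewise proves Theorem \ref{thm:ies} by rerunning the proof of Theorem \ref{thm:ie} with Theorem \ref{thm:labs} (applied with no fixed edges and $l=4$) in place of Proposition \ref{prop:full}, noting that the resulting bound $(2n-1)^{-4\varepsilon}$ still tends to $0$. The choice of $C$ and $K$, the uniform bound on the number of abstract diagrams, and the final appeal to Theorem \ref{thm:lg} all match the paper's argument.
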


\begin{proof}
The proof is completely analogous to the proof of Theorem \ref{thm:ie}: the only change is that we have to use the Theorem \ref{thm:labs}, instead of Proposition \ref{prop:full}, to obtain that for any fixed reduced abstract diagram $D$ violating the inequality $|\partial D| \geq 4(1 - 2d - 2 \varepsilon)|D|$ the probability that it appears as a van Kampen diagram of the presentation is $\leq (2n-1)^{-4 \varepsilon}$, which is $< n^{-4 \varepsilon}$.
\end{proof}

\begin{cor}\label{cor:hyps}
In the square model at density $d < \frac{1}{2}$ a random group is hyperbolic with overwhelming probability.
\end{cor}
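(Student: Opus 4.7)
The plan is to deduce Corollary \ref{cor:hyps} directly from the isoperimetric inequality just established in Theorem \ref{thm:ies}, using the classical characterization of hyperbolic groups by a linear isoperimetric function. Since the square model at density $d<\tfrac12$ produces a finite presentation (with $\lfloor (2n-1)^{4d}\rfloor$ relators, each of length $4$), the only missing ingredient is a positive linear Dehn function with overwhelming probability.

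Concretely, I would fix any $\varepsilon>0$ with $1-2d-\varepsilon>0$, which is possible because $d<\tfrac12$. Applying Theorem \ref{thm:ies} with this $\varepsilon$, with overwhelming probability every reduced van Kampen diagram $D$ over the random presentation satisfies
\[
|\partial D|\;\geq\;4(1-2d-\varepsilon)\,|D|,
\]
so the Dehn function of the presentation is bounded above by $C\cdot|\partial D|$ for $C=\bigl(4(1-2d-\varepsilon)\bigr)^{-1}$. This is a linear isoperimetric inequality with a strictly positive constant.

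By Gromov's theorem, a finitely presented group admitting a linear isoperimetric inequality is word-hyperbolic; this implication is actually already packaged into the local-global principle Theorem \ref{thm:lg} used in the proof of Theorem \ref{thm:ies}, where the conclusion ``and in particular the group is hyperbolic'' appears explicitly. Hence, with overwhelming probability as $n\to\infty$, a random group in the square model at density $d<\tfrac12$ is hyperbolic, which is precisely the corollary.

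There is no serious obstacle here: the work has been done in Theorem \ref{thm:ies}, and the only nontrivial point is the standard fact that a linear Dehn function implies hyperbolicity, which is already invoked inside the proof of the isoperimetric inequality. The proof is therefore essentially a one-line invocation of Theorem \ref{thm:ies} together with the classical linear-isoperimetric-implies-hyperbolic theorem.
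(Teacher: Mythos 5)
Your proposal is correct and follows essentially the same route as the paper: Corollary \ref{cor:hyps} is stated there as an immediate consequence of Theorem \ref{thm:ies}, with hyperbolicity already built into the conclusion of the local--global principle (Theorem \ref{thm:lg}) used in its proof. The only thing you add is the standard unwinding of the linear isoperimetric inequality into a linear Dehn function, which is harmless and consistent with the paper's intent.
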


%%%%%%%%%%%%%%%%%%%%%%%%%%%%%%%%%%%%%%%%%%%%%%%%%%%%%%%%%%%%%%%%%%%%%%%%%%%%%%%%%%%%%%%%%%%%%%%%%%%%%%%%%%%%%%%%%%%%%%%%%%%%%%%%%%%%%%%%%%%%%%%%%%%%%%%%%%%%%%%%%%%%%%%%%%%%%%%%%%%%%%%%%%%%%%%%%%%%%%%%%%%%%%%%%%

\section{Freeness}

In this chapter we are going to consider the case where density of relations is small. Our goal is to prove the following statement:

\begin{theo}[freeness theorem]\label{thm:free}
In the positive square model at density $d < \frac{1}{4}$ a random group is free with overwhelming probability.
\end{theo}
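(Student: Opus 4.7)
The plan is to show that at $d < \tfrac{1}{4}$ the random presentation $\langle A_n \mid R_n \rangle$ can be reduced to a free presentation by a sequence of Tietze eliminations, exploiting the fact that the relators interact only very sparsely. First I would establish three structural facts, each holding with overwhelming probability: (a) every relator in $R_n$ consists of four distinct letters; (b) the \emph{relator graph} $H$, with vertex set $R_n$ and an edge between any two relators sharing a common letter of $A_n$, is a forest; (c) no two relators in $R_n$ use exactly the same four-letter subset of $A_n$.

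Fact (a) is immediate from the bound $O(n^{4d-1})$ on the expected number of relators with a repeated letter. For fact (b), granted (a), the probability that two given relators share a letter is $\sim 16/n$, so $H$ is distributed essentially as the sparse Erd\H{o}s--R\'enyi graph $G(n^{4d}, 16/n)$ with expected mean degree $\sim 16\, n^{4d-1}$; the expected number of $k$-cycles is then $O((16\, n^{4d-1})^{k})$, which tends to zero for $d < \tfrac{1}{4}$ and is summable over $k \geq 3$. Fact (c) is a similar first-moment computation giving expectation $O(n^{8d-4})$, which is $o(1)$ for every $d < \tfrac{1}{2}$.

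Granted (a)--(c), the Tietze reduction is bookkeeping. Root each tree of $H$ arbitrarily and process its vertices in post-order (descendants before parent). For a non-root relator $v$ with parent $v'$, fact (c) provides a letter $a_v$ occurring in $v$ but not in $v'$; by (a) this letter occurs only once in $v$, so the relation $v = 1$ can be solved for $a_v$, expressing $a_v$ as a word in the other three letters of $v$. The key observation is that at the moment $v$ is processed, $a_v$ appears in no other remaining relator: any relator still present and containing $a_v$ would be adjacent to $v$ in $H$, hence either $v'$ (excluded by the choice of $a_v$) or a descendant of $v$ in the tree (already eliminated). Substituting therefore deletes $v$ and the generator $a_v$ from the presentation without touching any other relator. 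The root of each tree is handled in the same way, its letters being by that stage contained in no other relator of its component. After every tree has been processed, no relator remains, so the group is free.

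I expect the main obstacle to be fact (b): it is the only place where the threshold $\tfrac{1}{4}$ actually appears, and it requires a careful first-moment calculation on cycles in $H$. One has to handle the mild dependence coming from sampling $R_n$ without replacement, but since $|R_n| = n^{4d} \ll n^{4} = |W_n|$ this affects the estimates only by harmless constants. Facts (a) and (c) are soft union bounds, and the subsequent tree-by-tree Tietze bookkeeping is routine once the forest structure is in hand.
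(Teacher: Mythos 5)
Your proposal is correct, but it takes a genuinely different route from the paper. The paper argues topologically: it introduces hypergraphs (the dual graphs of the square presentation complex), shows by a first-moment bound over diagrams with $k$ faces and exactly $3k$ edges that at $d<\frac{1}{4}$ all hypergraphs are embedded trees with overwhelming probability (Theorem \ref{thm:ftree}), and then repeatedly cuts the presentation complex along a hypergraph carrier, invoking the Scott--Wall HNN-extension statement (Theorem \ref{thm:HNN}) to split off a free factor $\mathbb{Z}$ at each step until no $2$-cells remain, so the fundamental group is free. You instead stay entirely at the level of the presentation: facts (a)--(c) are soft union bounds (note that (a), like (b), already uses $d<\frac{1}{4}$, since the expected number of relators with a repeated letter is $O(n^{4d-1})$), and your decisive estimate --- no cycles in the letter-sharing graph $H$ --- is the same kind of first-moment count with exponent $(4d-1)k$ that the paper performs for hypergraph circuits; to make it rigorous you should count $k$-tuples of words directly (roughly $16^k n^{3k}$ cyclic chains, each lying in $R_n$ with probability about $n^{(4d-4)k}$) rather than lean on the $G(m,p)$ heuristic, with sampling without replacement handled as you indicate. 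Granted the forest structure, your post-order Tietze elimination is sound: the chosen letter $a_v$ lies in no remaining relator (any relator containing it is a neighbour of $v$ in $H$, hence either the parent, excluded by the choice permitted by (c), or an already-removed child), so relators are never rewritten, every surviving relator involves only surviving generators, and the terminal presentation is free --- in fact of explicit rank $n-\lfloor n^{4d}\rfloor$. What the paper's route buys is the hypergraph machinery itself, which is reused heavily later (embedded trees in the Cayley complex, quasi-isometric embedding, the failure of Property (T)); what yours buys is a more elementary, purely combinatorial proof avoiding van Kampen diagrams and HNN extensions, and it yields the rank of the free group for free.
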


To provide the proof of this theorem we will introduce several geometric objects:

\begin{defi}\label{def:complex}
A \textit{square complex} is a metric polyhedral complex in which each cell is isometric to the Euclidean square $[-\frac{1}{2}, \frac{1}{2}]^2$ and the gluing maps are isometries.
\end{defi}

Observe that we allow to glue a cell to itself and to glue two cells several times along distinct pairs of faces. Notice that van Kampen diagrams in the positive square model are square complexes.

Now we will introduce one of the basic notions in geometric group theory: hypergraphs. The following definitions are taken from \cite[Definition 2.1]{ow11}.

\begin{defi}\label{def:hyp}
Let $X$ be a connected square complex. We define a graph $\Gamma$ as follows: The set of vertices of $\Gamma$ is the set of $1$-cells of $X$. There is an edge in $\Gamma$ between two vertices if there is some $2$-cell $R$ of $X$ such that these vertices correspond to opposite $1$-cells in the boundary of $R$ (if there are several such $2$-cells we put as many edges in $\Gamma$). The $2$-cell $R$ is the $2$-cell of $X$ \textit{containing} the edge.

There is a natural map $\varphi$ from $\Gamma$ to $X$, which sends each vertex of $\Gamma$ to the midpoint of the corresponding $1$-cell of $X$ and each edge of $\Gamma$ to a segment joining two opposite points in the $2$-cell $R$. Note that the images of two edges contained in the same $2$-cell $R$ always intersect, so that in general $\varphi$ is not an embedding.

A \textit{hypergraph} in $X$ is a connected component of $\Gamma$. The $1$-cells of $X$ through which a hypergraph passes are \textit{dual} to it. The hypergraph $\Lambda$ \textit{embeds} if $\varphi$ is an embedding from $\Lambda$ to $X$, that is, if no two distinct edges of $\Lambda$ are mapped to the same $2$-cell of $X$.

We call the subdiagram of $X$ consisting of all open faces containing edges of the hypergraph $\Lambda$ the \textit{carrier of} $\Lambda$.
\end{defi}

The next definition we will give is one of the typical ways to construct a topological space $X$ such that $\pi_1(X) = G$ for given a finitely generated group $G$,.

\begin{defi}[presentation complex]\label{def:pres_complex}
Let $G = \left< A_n | R \right>$ be a group generated by $n$ elements. Consider a bouquet of $n$ circles labeled with elements of $A_n$. For every relator $r \in R$ there is a polygon with as many edges as letters in $r$, which is glued to the bouquet in the following way: the edge labeled by $a \in A_n$ is glued to the circle with label $a$ respecting the orientation. For a random group in the square or positive square model this construction results in a square complex, which we call the \textit{presentation complex}.
\end{defi}

There is a natural map from any van Kampen diagram to the presentation complex of $G$. One of the main steps in our proof of Theorem \ref{thm:free} is the following statement:

\begin{theo}\label{thm:ftree}
In the positive square model at density $d < \frac{1}{4}$ with overwhelming probability all hypergraphs in the presentation complex are embedded trees.
\end{theo}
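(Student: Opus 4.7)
The plan is to reduce failure of embedded-treeness for some hypergraph $\Lambda$ to the existence of a cycle in the graph $\Gamma$ of Definition~\ref{def:hyp}, and then to eliminate such cycles via a union bound over labelled configurations. If two distinct edges of $\Lambda$ map to the same $2$-cell $R$, they are the two opposite crossings of $R$; since they lie in one component, any path in $\Lambda$ connecting their endpoints closes, together with the crossings, into a cycle. (In degenerate cases such as a relator $abba$, the two crossings are already parallel edges of $\Gamma$ and form a cycle of length two on their own.) So it suffices to show that $\Gamma$ contains no cycle with overwhelming probability.

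For the main count, take a minimal cycle of length $k$, consisting of $k$ edges coming from $k$ distinct $2$-cells $R_1, \dots, R_k$ (the generic case; configurations where a single $2$-cell is used twice have strictly more positional constraints on their relator, so their expected count is dominated by the generic one and may be absorbed into the estimate below). Consecutive cells $R_i, R_{i+1}$ share a $1$-cell $g_i$, which is forced into one of two opposite positions of each of $R_i$ and $R_{i+1}$, leaving the other two positions of each relator free. The number of such labelled configurations is at most $2^k$ (choice of crossing-pair in each face) times $n^k$ (shared generators $g_i$) times $n^{2k}$ (free letters), for a total of $2^k n^{3k}$ specific $k$-tuples of relators. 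The probability that a given $k$-tuple of distinct relators lies in $R_n$ is at most $\bigl(n^{4d}/n^4\bigr)^k = n^{k(4d-4)}$, so the expected number of minimal cycles of length $k$ is at most $2^k n^{k(4d-1)}$. Summing over $k \geq 1$ gives a geometric series with ratio $2 n^{4d-1}$, which tends to $0$ as $n \to \infty$ precisely because $d < \tfrac14$.

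The main technical point is the bookkeeping for degenerate configurations: length-$1$ cycles from ``bad'' relators $abad$ or $abcb$ (at most $2n^3$ such relators, expected count $O(n^{4d-1}) \to 0$), length-$2$ cycles from relators like $abba$, and cycles that revisit a $2$-cell. All such cases give strictly fewer free positions in the relators and hence even smaller expected counts. A cleaner conceptual alternative is to cut the annular or M\"obius carrier of a minimal cycle along a single internal $1$-cell; the resulting reduced disk van Kampen diagram has $k$ faces and boundary length at most $2k + 2$, and the isoperimetric inequality of Theorem~\ref{thm:ie} forces $2k + 2 \geq 4(1 - 2d - \varepsilon)k$, bounding $k$ by a constant depending only on $d$. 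The direct count above then need only handle finitely many small cycle-lengths.
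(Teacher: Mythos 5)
Your argument is essentially the paper's: the paper reduces a failure of embedded-treeness to a closed chain (``circuit'') of $k$ square faces in which consecutive faces share a $1$-cell dual to consecutive hypergraph edges, counts at most $n^{3k}$ (up to an exponential-in-$k$ positional factor) fulfilling $k$-tuples, multiplies by the probability $<n^{(4d-4)k}$ that a given $k$-tuple of relators is drawn, and sums the resulting geometric series with ratio $n^{4d-1}\to 0$ for $d<\tfrac14$ --- exactly your main count, including your treatment of degenerate small configurations. One caveat on your reduction: if two crossings of one $2$-cell lie in the same component $\Lambda$, their four endpoints are in general four distinct vertices of $\Gamma$, so the connecting path together with the two crossings is a walk, not a literal cycle of $\Gamma$; acyclicity of $\Gamma$ alone would give treeness but not embeddedness. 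What it does give is a closed chain of faces whose first and last $2$-cell coincide (this is precisely the paper's circuit with $F_1=F_k$), and your count already covers this since you allow a repeated $2$-cell with both of its crossings used --- so the fix is purely one of phrasing: count closed chains of faces rather than cycles of $\Gamma$. Your alternative finish via cutting the annular (or M\"obius) carrier and invoking Theorem~\ref{thm:ie} to bound $k$ is not what the paper does here, but it is close in spirit to the collared-diagram argument the paper uses later for the Cayley complex (Lemma~\ref{lem:et}); it would need a word about reducedness of the cut diagram, whereas the direct count needs no such care.
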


\begin{proof}
Denote the presentation complex of a random group by $X$. We will estimate the probability $P$ of drawing the set of relators $R_n$ for which the statement does not hold, i.e. there exists a hypergraph in $X$ which is not an embedded tree.

Hence, let us assume that such a hypergraph exists and call it $\Lambda$. The image of $\Lambda$ under the natural map is not a tree, so $\Lambda$ contains a \textit{circuit} (an edge path $(e_1, e_2, \dots, e_k )$ in $\Lambda$ such that images of $e_1$ and $e_k$ intersect in $X$). Without loss of generality we can assume that $k$ is the minimal possible length of a circuit. For $i \in \{1, \dots, k \}$, let $F_i$ be the $2$-cell of $X$ containing the edge $e_i$. We have chosen the circuit of the minimal length, so $F_1 = F_k$ and $F_i \neq F_j$ for $i < j$, except where $i=1, j=k$.

Let $D$ be a diagram consisting of faces $F_1, \dots, F_k$ glued in the following way: for $1 \leq i \leq k-1$ faces $F_i$ and $F_{i+1}$ are glued along $1$-cells which contain the common vertex of $e_i$ and $e_{i+1}$. It can be easily seen that $D$ is either: an annular diagram, a twisted diagram or a van Kampen diagram. We will estimate a probability $P_k$ of drawing the set of relators which allows to construct a diagram with $k$ faces and exactly $3k$ edges and consisting of distinct relators ($D$ has these properties).

Let $E$ be the abstract diagram obtained from $D$. There are $n^{3k}$ $k-$tuples of relators fulfilling $E$. Denote by $L$ the set of these $k-$tuples. To fulfill $E$ one of the elements $\alpha \in L$ must be a subset of the set of drawn words $R_n$.

For $\alpha \in L$ let $P_{\alpha}$ be the probability of drawing the set $R_n$ which contains $\alpha$. Then:

$$P_{\alpha} = \frac{\left(n^4 - k \atop \lfloor n^{ 4d }\rfloor - k \right)}{\left( n^4 \atop \lfloor n^{ 4d }\rfloor \right) }$$
Hence:

\begin{equation}\label{poz}
P_{\alpha} = \frac{( \lfloor n^{ 4d }\rfloor )(\lfloor n^{ 4d }\rfloor - 1) \dots (\lfloor n^{ 4d }\rfloor - k + 1)   }{n^4 (n^4 - 1) \dots (n^4 - k +1)} < \frac{n^{k 4 d}}{n^{4k}}
\end{equation}
We have $P_k \leq n^{3k} P_{\alpha}$. To estimate the probability of the existence of a hypergraph, that is not an embedded tree, we sum $P_k$ over all possible $k$:

\begin{equation} \label{eq:free}
P = \sum_{k=1}^{n^4} P_k \leq \sum_{k=1}^{n^4} n^{3k} \frac{n^{k 4d }}{n^{4k}} < \sum_{k=1}^{\infty} n^{(4d  - 1)k} < \frac{1}{1 - n^{4d - 1}} - 1
\end{equation}

We assumed that $d < \frac{1}{4}$, so the right hand side of (\ref{eq:free}) converges to 0 when $n \rightarrow \infty$.

\end{proof}

Let us recall one of the applications of the HNN extension construction:

\begin{theo}[{\cite[Proposition 1.2]{sw77}}]\label{thm:HNN}
Let $V$ be a Hausdorff topological space and let $Y_1, Y_2 \subset V$ be two distinct, simply-connected and path-connected subsets such that there is a homeomorphism $f: Y_1 \rightarrow Y_2$. By $V*_{f}$ we denote the topological space $V /\sim $, where the relation $\sim $ is defined as follows: $y \sim  f(y)$ for all $y \in Y_1$. Then $\pi_1(V*_{f}) = \pi_1(V)*\mathbb{Z}$.
\end{theo}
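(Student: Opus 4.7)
The strategy is to replace $V*_f$ by a homotopy-equivalent space with a tractable open cover, then apply the Seifert--van Kampen theorem. Form the double mapping cylinder
$$W = \bigl(V \sqcup (Y_1 \times [0,1])\bigr)/\sim,$$
where $(y,0) \sim y$ and $(y,1) \sim f(y)$ for every $y \in Y_1$. Geometrically, $W$ is $V$ with a ``handle'' $Y_1 \times [0,1]$ attached at its two ends to $Y_1$ and $Y_2$.

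The first step is to prove $W \simeq V*_f$. Both spaces sit in pushout squares over the same map $Y_1 \sqcup Y_1 \to V$ (the identity on the first copy, $f$ on the second): $W$ is the pushout along the inclusion $Y_1 \sqcup Y_1 \hookrightarrow Y_1 \times [0,1]$, while $V*_f$ is the pushout along the collapse $Y_1 \sqcup Y_1 \to Y_1$. Since the inclusion is a cofibration and the collapse $Y_1 \times [0,1] \to Y_1$ is a homotopy equivalence, the standard homotopy invariance of pushouts gives $W \simeq V*_f$; this also uses that $Y_1 \sqcup Y_2 \hookrightarrow V$ is a cofibration, which is automatic in the CW situations of interest here.

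Next I would compute $\pi_1(W)$. Let $U_1 \subset W$ be an open neighborhood of $V$ deformation retracting onto $V$, and let $U_2 \subset W$ be an open neighborhood of the attached cylinder deformation retracting onto $Y_1 \times [0,1]$; the intersection $U_1 \cap U_2$ has two path components, deformation retracting onto $Y_1$ and $Y_2$ respectively. Apply Seifert--van Kampen in its fundamental-groupoid version with basepoint $* \in Y_1$. Because $\pi_1(Y_1) = \pi_1(Y_2) = 1$, the theorem presents $\pi_1(W,*)$ by the generators of $\pi_1(V,*)$ together with one extra generator $t$ --- represented by the arc $\{*\} \times [0,1]$ inside the cylinder, closed up by any path in $V$ from $f(*)$ back to $*$ --- and subject to no new relations. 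Hence $\pi_1(W) \cong \pi_1(V) * \mathbb{Z}$, which combined with the first step yields the desired $\pi_1(V*_f) \cong \pi_1(V) * \mathbb{Z}$.

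The main obstacle is the first step: upgrading the set-theoretic quotient $W \to V*_f$, which collapses each arc $\{y\} \times [0,1]$ onto the single point $[y]$, into a genuine homotopy equivalence requires enough local regularity of the inclusions $Y_i \hookrightarrow V$ to fit into the pushout-comparison framework, i.e.\ that $Y_1 \sqcup Y_2 \hookrightarrow V$ is a cofibration. Once this point is in hand, the van Kampen computation on $W$ is formal, and expresses the intuition that identifying two disjoint simply connected subspaces via $f$ is, up to homotopy, the same as attaching a $1$-handle with simply connected cross-section, which contributes exactly one free $\mathbb{Z}$ factor to the fundamental group.
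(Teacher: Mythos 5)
The paper does not prove this statement at all: it is quoted verbatim from Scott--Wall \cite[Proposition 1.2]{sw77} and used as a black box, so there is no internal proof to compare against. Your argument is the standard one for the topological HNN construction (realize $V*_f$ up to homotopy as the double mapping cylinder $W$ of $Y_1\sqcup Y_1\to V$, then run Seifert--van Kampen in the groupoid form on the cover by a neighborhood of $V$ and a neighborhood of the attached cylinder), and it is essentially correct; it is also in the spirit of how Scott--Wall treat graphs of spaces. Two caveats are worth making explicit. First, your claim that $U_1\cap U_2$ has exactly two path components, and indeed the conclusion itself, requires $Y_1$ and $Y_2$ to be \emph{disjoint} (and $V$ path-connected); the theorem's word ``distinct'' must be read this way, as it is in the paper's application, where $Y_1,Y_2$ are the two copies of a hypergraph in $\overline{X-\Lambda}$. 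Second, as you correctly flag, the comparison $W\simeq V*_f$ is not automatic for arbitrary subsets of a Hausdorff space: the statement as literally written is too weak, and one needs the inclusion $Y_1\sqcup Y_2\hookrightarrow V$ to be a cofibration (equivalently, one needs the strict pushout along it to agree with the homotopy pushout computed by $W$). That hypothesis does hold in the situation the paper actually uses the theorem in, namely subcomplexes of a finite square complex, so your proof covers the intended application; identifying the cofibration condition as the genuine content of the reduction, rather than the van Kampen bookkeeping, is exactly right.
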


Now we are ready to provide the proof of the freeness theorem.

\begin{proof}[Proof of Theorem \ref{thm:free}]
From Theorem \ref{thm:ftree} we know that with overwhelming probability all hypergraphs in the presentation complex $X$ are embedded trees. Let us take an arbitrary hypergraph $\Lambda$. Let $H$ be the carrier of $\Lambda$.

Let us consider the complex $\overline{X - \Lambda}$ (by $\overline{A}$ we denote the completion of the complex $A$ in the path metric). Note that $\overline{X - \Lambda} - (X - \Lambda)$ consists of two isometric copies of $\Lambda$ denoted: $\Lambda_1$ and $\Lambda_2$. Let $\phi : \Lambda_1 \rightarrow \Lambda_2$ be the homeomorphism between $\Lambda_1$ and  $\Lambda_2$. $X - \Lambda$ is homotopically equivalent to $\overline{X - \Lambda}$ and $X - H$. Moreover the space $(\overline{X - \Lambda})*_{\phi}$ is equal to the complex $X$. Hence $\pi_1(X) = \pi_1((\overline{X - \Lambda})*_{\phi})$, and by Theorem \ref{thm:HNN} we obtain $\pi_1(X) = \pi_1(X - H)*\mathbb{Z}$.

We now perform the same procedure for the subcomplex $X_1 := X - H$. We choose an arbitrary hypergraph in $X_1$ and remove its carrier from $X_1$ obtaining a smaller complex $X_2$. By Theorem \ref{thm:HNN}: $\pi_1(X) = \pi_1(X_1) * \mathbb{Z} = \pi_1(X_2) * \mathbb{Z}* \mathbb{Z}$.

We now inductively repeat this procedure. Note that the presentation complex is finite and each time we remove at least one cell, so this induction must stop after a finite number $m$ of steps. Let $X_m$ be the subcomplex obtained after $m$ steps. We cannot perform this procedure on $X_m$ which specifically means that there are no hypergraphs in $X_m$. But the only square complex with no hypergraphs is the square complex consisting of one vertex, which has the trivial fundamental group. Therefore $\pi_1(X) = \pi_1(X_m) * \underbrace{\mathbb{Z} * \dots * \mathbb{Z}}_{m} = \underbrace{\mathbb{Z} * \dots * \mathbb{Z}}_{m}$. Hence $\pi_1(X)$ is a free group with overwhelming probability.
\end{proof}

\subsection{Freeness in the square model}

First, we will prove the following

\begin{theo}\label{thm:ftrees}
In the square model at density $d < \frac{1}{4}$ with overwhelming probability all hypergraphs in the presentation complex are embedded trees.
\end{theo}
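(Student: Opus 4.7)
The plan is to imitate the proof of Theorem~\ref{thm:ftree} almost verbatim, adapting it to the square model in the same fashion that Theorem~\ref{thm:labs} adapted Theorem~\ref{thm:lab}: by replacing $n$ with $2n-1$ throughout the counting. I would suppose, for contradiction, that some hypergraph $\Lambda$ in the presentation complex $X$ is not an embedded tree; then $\Lambda$ contains a circuit $(e_1,\ldots,e_k)$ of minimal length, and the containing $2$-cells $F_1,\ldots,F_k$ (with $F_1 = F_k$ and otherwise distinct) assemble along shared $1$-cells into a diagram $D$ --- annular, twisted, or a van Kampen diagram --- with $k$ faces, exactly $3k$ edges, and $k$ distinct relators, yielding an abstract diagram $E$.

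The heart of the argument is a probability estimate. I would bound the number of $k$-tuples of cyclically reduced words of length $4$ that fulfill $E$ by $(2n)^{3k}$, since $E$ has $3k$ free edges, each admitting at most $2n$ oriented labellings by elements of $A_n \cup A_n^{-1}$. The probability that a fixed such tuple $\alpha$ lies in the randomly drawn set $R_n$ of $\lfloor (2n-1)^{4d} \rfloor$ relators is
$$P_\alpha = \frac{\binom{|W'_n|-k}{|R_n|-k}}{\binom{|W'_n|}{|R_n|}} < C \cdot \frac{(2n-1)^{4dk}}{(2n-1)^{4k}}$$
for some constant $C>0$, using that $|W'_n|$ is of order $(2n-1)^4$. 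Combining, the probability that a circuit of length $k$ appears is at most $C^k (2n-1)^{(4d-1)k}$, so summing over $k \geq 1$ yields a geometric series whose sum tends to $0$ as $n \to \infty$ whenever $d < \tfrac{1}{4}$.

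No serious obstacle arises. The two differences from the positive square model --- that each free edge carries an orientation as well as a generator, and that the denominator base is $2n-1$ rather than $n$ --- modify only multiplicative constants and leave the critical exponent $4d-1$ unchanged. The single point worth checking is that $(2n)^{3k}$ remains an upper bound on the number of fulfilling tuples once the cyclic reducedness constraint on relators is imposed; this is automatic since such a constraint only eliminates possibilities and can never increase the count.
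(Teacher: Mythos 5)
Your proposal is correct and follows essentially the same route as the paper: reduce a non-tree hypergraph to a diagram with $k$ faces and $3k$ edges, bound the number of fulfilling $k$-tuples (the paper uses the slightly sharper count $(2n)^{k-1}(2n-1)^{2k-1}(2n-2)^2 \le (2n-1)^{(3+\delta_1)k}$, you use $(2n)^{3k}$), bound $P_\alpha$ via the same hypergeometric ratio, and sum a geometric series with ratio of order $(2n-1)^{4d-1}$; the only difference is that you absorb the $2n$ versus $2n-1$ versus $|W'_n|$ discrepancies into multiplicative constants $C^k$, while the paper absorbs them into small exponents $\delta_1,\delta_2$ with $4d-1+\delta_1+\delta_2<0$. (Just write $C^k$ rather than $C$ already in the bound for $P_\alpha$, as you in fact do when combining the estimates.)
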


\begin{proof}
As in the proof of Theorem \ref{thm:ftree} we only need to prove that with overwhelming probability there are no diagrams with $k$ faces and exactly $3k$ edges in the presentation complex for any $k$. Let $E$ be such an abstract diagram with the minimal number of faces. We will estimate the probability $P_k$ of drawing the set of relators such that $E$ can be fulfilled. There are at most $(2n)^{k-1}(2n-1)^{2k-1}(2n-2)^2$ $k-$tuples of relators fulfilling $E$. Note that for any $\delta_1$ this number is smaller than $(2n-1)^{(3+\delta_1)k}$ for a sufficiently large $n$. Denote by $L$ the set of these $k-$tuples. To fulfill $E$ one of the $k$-tuples $\alpha \in L$ must be a subset of the set of drawn words $R_n$.

For $\alpha \in L$ let $P_{\alpha}$ be the probability of drawing the set $R_n$ which contains $\alpha$. Then

$$P_{\alpha} = \frac{\left(|W'_n| - k \atop \lfloor (2n-1)^{ 4d }\rfloor - k \right)}{\left( |W'_n| \atop \lfloor (2n-1)^{ 4d }\rfloor \right) }$$
Hence

\begin{equation}\label{eq:pozs}
P_{\alpha} = \frac{( \lfloor (2n-1)^{ 4d }\rfloor )(\lfloor (2n-1)^{ 4d }\rfloor - 1) \dots (\lfloor (2n-1)^{ 4d }\rfloor - k)   }{|W'_n| (|W'_n| - 1) \dots (|W'_n| - k)} < \frac{(2n-1)^{k 4 d}}{|W'_n|^{k}}
\end{equation}

Note that $|W'_n| \geq 2n(2n-1)^2(2n-2)$ which, for any $\delta_2 > 0$ is greater than $(2n-1)^{4 - \delta_2}$ for a sufficiently large $n$. Therefore, we can estimate the right hand side of (\ref{eq:pozs}) by  $(2n-1)^{(4d - 4 + \delta_2)k}$. As in the proof of Theorem \ref{thm:ftree} we estimate the sum of $P_k$ over all possible $k$

\begin{equation} \label{eq:frees}
P = \sum_{k=1}^{|W'_n|} P_k < \sum_{k=1}^{\infty} (2n-1)^{(4d - 1 + \delta_1 + \delta_2)k} <  \frac{1}{1 - (2n-1)^{4d - 1 + \delta_1 + \delta_2}}.
\end{equation}

We assumed that $d < \frac{1}{4}$, so we can choose $\delta_1 > 0$ and $\delta_2 > 0$, such that $4d - 1 + \delta_1 + \delta_2 < 0$. Then the right hand side of (\ref{eq:frees}) converges to 0 when $n \rightarrow \infty$.
\end{proof}

\begin{theo}[freeness theorem in the square model]\label{thm:frees}
In the square model at density $d < \frac{1}{4}$ a random group is free with overwhelming probability.
\end{theo}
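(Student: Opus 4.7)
The plan is to mirror the proof of Theorem \ref{thm:free} essentially verbatim, with the only substantive change being the input: use Theorem \ref{thm:ftrees} in place of Theorem \ref{thm:ftree}. Since the presentation complex $X$ in the square model is still a square complex (relators have length four, even though they need not be positive), the entire geometric/topological machinery developed for the positive square case applies without modification.

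First I would invoke Theorem \ref{thm:ftrees} to conclude that with overwhelming probability every hypergraph in the presentation complex $X$ of the random group is an embedded tree. Fix any sample of $R_n$ for which this holds. Next, I would pick an arbitrary hypergraph $\Lambda \subset X$ with carrier $H$, and form the completion $\overline{X - \Lambda}$ in the path metric. As in the proof of Theorem \ref{thm:free}, the set $\overline{X - \Lambda} - (X - \Lambda)$ consists of two isometric copies $\Lambda_1, \Lambda_2$ of $\Lambda$, joined by the tautological homeomorphism $\phi \colon \Lambda_1 \to \Lambda_2$. Since $\Lambda$ is an embedded tree, both $\Lambda_1$ and $\Lambda_2$ are simply connected and path-connected, so Theorem \ref{thm:HNN} applies and yields
\[
\pi_1(X) \;=\; \pi_1\bigl((\overline{X - \Lambda})*_\phi\bigr) \;=\; \pi_1(X - H) * \mathbb{Z}.
\]

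Then I would iterate: set $X_1 := X - H$, pick a hypergraph in $X_1$ (which is again an embedded tree, being a subgraph of a hypergraph of $X$), remove its carrier, and apply Theorem \ref{thm:HNN} again, so that after $m$ steps $\pi_1(X) = \pi_1(X_m) * \mathbb{Z}^{*m}$. Because $X$ is a finite square complex and each step removes at least one two-cell, the process terminates; the resulting $X_m$ has no hypergraphs at all, which forces it to be a single vertex and hence to have trivial fundamental group. Therefore $\pi_1(X)$ is a free group of rank $m$, so the random group, being isomorphic to $\pi_1(X)$, is free with overwhelming probability.

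I do not expect any genuine obstacle here: all the real work was done in establishing Theorem \ref{thm:ftrees} (where the probabilistic counting had to account for $|W'_n|$ and $(2n-1)^{4d}$ rather than $n^4$ and $n^{4d}$) and in verifying that Theorem \ref{thm:HNN} applies to embedded-tree hypergraphs. The present theorem is a purely formal topological corollary of Theorem \ref{thm:ftrees}, identical in structure to the proof of Theorem \ref{thm:free}, and so the proof amounts to a one-line reduction to the previous argument.
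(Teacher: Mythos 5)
Your proposal is correct and matches the paper's proof exactly: the paper also proves Theorem \ref{thm:frees} by repeating the argument of Theorem \ref{thm:free} verbatim, with the single change of invoking Theorem \ref{thm:ftrees} in place of Theorem \ref{thm:ftree}. No further comment is needed.
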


\begin{proof}
The proof is identical to the proof of Theorem \ref{thm:free} with the only one change: we use now Theorem \ref{thm:ftrees} instead of Theorem \ref{thm:ftree}.
\end{proof}

%%%%%%%%%%%%%%%%%%%%%%%%%%%%%%%%%%%%%%%%%%%%%%%%%%%%%%%%%%%%%%%%%%%%%%%%%%%%%%%%%%%%%%%%%%%%%%%%%%%%%%%%%%%%%%%%%%%%%%%%%%%%%%%%%%%%%%%%%%%%%%%%%%%%%%%%%%%%%%%%%%%%%%%%%%%%%%%%%%%%%%%%%%%%%%%%%%%%%%%%%%%%%%%%%%

\section{Groups without Property (T)}

In this chapter our goal is to prove the following statement

\begin{theo}\label{thm:nT}
In the positive square model at density $d < \frac{1}{3}$ with overwhelming probability a random group does not have property \emph{(T)}.
\end{theo}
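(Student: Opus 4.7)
My plan is to exhibit a non-trivial isometric action of $G$ on a space with walls that has unbounded orbits, and then to appeal to the classical construction (of Haglund--Paulin and Bekka--Ch\'erix--Valette) that produces from such an action an affine isometric $G$-action on a Hilbert space with unbounded orbits, contradicting Property~(T).

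Let $X$ be the presentation complex of $G$ and $\cay$ its universal cover (the Cayley $2$-complex), both of which are square complexes. Define hypergraphs in $\cay$ exactly as in Definition~\ref{def:hyp}; the group $G$ acts on $\cay$ by deck transformations and permutes the hypergraphs. The first and main step is to show that at density $d<\tfrac{1}{3}$, with overwhelming probability, every hypergraph $\Lambda \subset \cay$ embeds and is two-sided, so that $\cay \smallsetminus \Lambda$ splits into two non-empty components. An obstruction to embedding or two-sidedness produces a reduced (possibly annular or twisted) diagram $D$ whose faces form the carrier of a short closed hypergraph path: a cyclic strip of $k$ squares glued along $k$ shared $1$-cells, with strong combinatorial constraints relating the labels on the $1$-cells paired by the hypergraph. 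I will encode those constraints as fixed edges of the corresponding abstract diagram and invoke Theorem~\ref{thm:lab}. A careful accounting of the fixed edges against the boundary of the carrier, combined with a union bound over the uniformly bounded collection of abstract diagram shapes of each size (truncated at a size depending only on $\varepsilon$, using Corollary~\ref{cor:hyp}), should push the threshold up from $\tfrac{1}{4}$ --- the threshold that comes out of the direct argument of Theorem~\ref{thm:ftree} --- to $\tfrac{1}{3}$.

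Once Step~1 is in place, each hypergraph $\Lambda$ defines a wall $\{H_\Lambda^+, H_\Lambda^-\}$ on the vertex set of $\cay$, and the wall pseudo-metric $d_w$ counting the number of walls separating two vertices is $G$-invariant. Each $1$-cell of $\cay$ is dual to a unique hypergraph, and embeddedness prevents any hypergraph from being crossed twice by a combinatorial geodesic, so $d_w(x_0, g \cdot x_0)$ grows linearly in the word length of $g$. Since $G$ is infinite hyperbolic for $d<\tfrac{1}{2}$ by Corollary~\ref{cor:hyp}, the $G$-orbits in $(V(\cay), d_w)$ are unbounded. The standard $\ell^2$-construction on the set of walls then yields an affine isometric $G$-action on a Hilbert space with unbounded orbits, which contradicts Property~(T).

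The main obstacle is Step~1: giving a precise combinatorial description of the diagrams that witness failure of hypergraph embedding or two-sidedness in an annular or twisted setting, identifying the correct set of fixed edges enforced by the hypergraph pairings, and running Theorem~\ref{thm:lab} to obtain the decay at exactly the threshold $d<\tfrac{1}{3}$. The remaining steps are routine adaptations of the Ollivier--Wise wallspace construction used in the Gromov model.
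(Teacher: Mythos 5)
Your overall framework (walls from hypergraphs, then the Haglund--Paulin/Bekka--Ch\'erix--Valette construction turning an action with unbounded wall-orbits into an affine isometric action without fixed points) is a legitimate route to negating Property (T), but it is not the paper's route, and as written it has a genuine gap at its crucial step. The claim that ``embeddedness prevents any hypergraph from being crossed twice by a combinatorial geodesic, so $d_w(x_0,g\cdot x_0)$ grows linearly in the word length of $g$'' is unjustified. A hypergraph being an embedded two-sided tree does not make its halfspaces convex in $\cay^1$: a geodesic can perfectly well enter the second component and return, in which case that wall does not separate the endpoints and contributes nothing to $d_w$. Controlling how often geodesics cross a given wall an odd number of times is exactly the hard ``separation/properness'' issue in Ollivier--Wise, which they resolve only at density $<1/6$ in the Gromov model (for cubulation/Haagerup), \emph{not} at the density $<1/5$ where they disprove (T); there they deliberately avoid this difficulty, and so does this paper. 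Even unboundedness (let alone linear growth) of wall-orbits needs an argument, and neither quasiconvexity of hypergraphs nor hyperbolicity of $G$ supplies it for free; whether it can be pushed to all $d<\tfrac13$ in the square model is precisely the kind of question you cannot wave away. Your Step 1 is also only a sketch: the paper proves embeddedness at $d<\tfrac13$ cleanly via the Ollivier--Wise collared-diagram criterion (Theorem \ref{thm:collar}) together with the isoperimetric inequality, rather than by the fixed-edge counting you gesture at, though this part is repairable.

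The paper takes a different and cheaper path that avoids the wall-metric question entirely: it uses the Niblo--Roller criterion (Theorem \ref{thm:criterion}) that a subgroup with at least two relative ends obstructs (T). To apply it one needs, besides embeddedness of hypergraphs (Lemma \ref{lem:et}) and quasiconvexity of their stabilizers (Lemma \ref{lem:qi}, Corollary \ref{cor:sub}), two facts your proposal never addresses because your route does not need them in principle: that hypergraphs are leafless at $d>\tfrac14$ (Lemma \ref{lem:leaf}) and that some pair of hypergraphs crosses exactly once (Theorem \ref{thm:once}, proved by classifying the few $2$-collared diagrams allowed by the isoperimetric inequality and killing them with the fixed-edge estimate of Theorem \ref{thm:lab} and the parity Lemma \ref{lem:parzy}); these feed into the Ollivier--Wise relative-ends lemma (Lemma \ref{thm:numb}). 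So the trade-off is: the paper's approach needs only one crossing pair of hypergraphs and no metric control of the wall pseudo-metric, while yours would need a quantitative separation statement that is currently unproven at this density. As it stands, your proof is incomplete at its central step.
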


First, we will define \textit{Kazhdan's property} (T) and formulate some basic facts. Here we are only concerned with discrete, finitely generated groups. For a more complete treatment including property (T) see \cite{bdl}.
	Let  $\Gamma$ be a finitely generated group with a finite generating set $S$. Let $H$ be a Hilbert space and $\pi : \Gamma \rightarrow \mathbb{B}(H)$ a unitary representation of $\Gamma$ on $H$. We will say that $\pi$ has \textit{almost invariant vectors} if for every $\varepsilon > 0$ there exists $u_{\varepsilon} \in H$ such that for every $s \in S$ we have $||\pi(s)u_{\varepsilon} - u_{\varepsilon}|| < \varepsilon ||u_{\varepsilon}||$. A vector $u \in H$ is called \textit{invariant} if $\pi(g)u = u$ for every $g \in \Gamma$.
	
\begin{defi}\label{def:T}
We say that $\Gamma$ has \textit{property} (T) if for every $H$ and $\pi$ the following holds:
if $\pi$ has almost invariant vectors, then $\pi$ has an invariant vector.
\end{defi}

For our purpose the following criterion will be useful

\begin{theo}[\cite{nr98}]\label{thm:criterion}
If a group $G$ has a subgroup $H$ with the number of relative ends at least 2 then $G$ does not have Property (T).
\end{theo}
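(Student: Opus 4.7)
The plan is to apply the Niblo--Roller criterion (Theorem~\ref{thm:criterion}): it suffices to exhibit, with overwhelming probability, a subgroup $H \leq G$ with at least two relative ends. The subgroup will be the stabilizer of a hypergraph in the Cayley complex of $G$, and the two halfspaces cut out by that hypergraph will account for the two relative ends.

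\emph{Step 1 (setup).} Let $X$ be the presentation complex of the random group $G$ and $\widetilde X$ its universal cover, on which $G$ acts freely and cocompactly. Pick any generator $a \in A_n$, viewed as a $1$-cell of $X$, and lift it to a $1$-cell $\widetilde a \subset \widetilde X$. Let $\Lambda \subset \widetilde X$ be the hypergraph dual to $\widetilde a$ in the sense of Definition~\ref{def:hyp}, and set $H := \mathrm{Stab}_G(\Lambda)$.

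\emph{Step 2 (hypergraphs in $\widetilde X$ are embedded trees at $d<1/3$).} This is the technical core of the argument, and it is strictly stronger than Theorem~\ref{thm:ftree} because we work in $\widetilde X$ rather than $X$; the $1/3$-threshold instead of $1/4$ emerges from treating the two endpoints of a failing hypergraph segment as fixed edges in the sense of Definition~\ref{def:fix}. Assume for contradiction that some hypergraph contains a cycle or is not embedded. Using simple-connectivity of $\widetilde X$ and the strategy of the proof of Theorem~\ref{thm:ftree}, extract a minimal reduced van Kampen diagram $D$ witnessing this pathology, swept by a hypergraph segment whose two endpoints lie on $\partial D$ (or are identified in the loop case). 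Marking these two hypergraph-endpoint $1$-cells as fixed, $D$ becomes an abstract diagram with $K=2$ fixed edges. Since the hypergraph traverses each face of $D$ via a pair of opposite $1$-cells, only the remaining two $1$-cells of each face can lie on $\partial D$, yielding the combinatorial bound $|\partial D| \leq 2|D|+O(1)$. Plugging into Theorem~\ref{thm:lab} with $l=4$ and $K=2$, the inequality $|\partial D|-2K \geq 4|D|(1-2d-\varepsilon)$ rearranges to $d \geq 1/3 - \varepsilon'$, contradicting $d<1/3$. Hence with overwhelming probability every hypergraph in $\widetilde X$ is an embedded tree.

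\emph{Step 3 (two relative ends and conclusion).} By Lemma~\ref{lem:parzy}, together with the fact that all relators are positive, $\Lambda$ is two-sided, so removing $\Lambda$ from the simply connected complex $\widetilde X$ produces two connected components $U_+$ and $U_-$, each preserved setwise by $H$. Each of $U_\pm$ contains infinitely many $G$-translates of a fundamental domain, so passing to the Schreier coset graph $G/H$ one obtains two disjoint infinite subsets of cosets whose union is cofinite. This gives $e(G,H) \geq 2$, and Theorem~\ref{thm:criterion} yields that $G$ does not have Property~(T).

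The main obstacle is Step 2: performing the case analysis of hypergraph cycles versus self-intersections in $\widetilde X$, and correctly identifying the associated van Kampen diagram and its fixed edges so that the arithmetic in Theorem~\ref{thm:lab} delivers the threshold $1/3$, rather than the weaker $1/4$ one obtains when the hypergraph endpoints are ignored. A secondary point to verify carefully is the two-sidedness of $\Lambda$, where Lemma~\ref{lem:parzy} and the positive-relator hypothesis enter; without two-sidedness one gets a single component of the complement instead of a separating wall.
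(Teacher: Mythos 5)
You have proven the wrong statement. Theorem~\ref{thm:criterion} is the Niblo--Roller criterion itself: a \emph{general} group-theoretic assertion that any group $G$ possessing a subgroup $H$ with $e(G,H)\geq 2$ fails Property~(T). In the paper this is an external result cited from \cite{nr98} and carries no proof of its own. Your proposal does not prove it --- it \emph{assumes} it (you open with ``The plan is to apply the Niblo--Roller criterion'') and then sketches an \emph{application} of it, namely that a random group in the square model at density $d<1/3$ has such a subgroup $H$. That is the content of Lemma~\ref{lem:et}, Theorem~\ref{thm:once}, Lemma~\ref{lem:leaf}, and Theorem~\ref{thm:faj}, not of Theorem~\ref{thm:criterion}.

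A proof of the actual statement would have nothing to do with van Kampen diagrams, hypergraphs, or random presentations. The standard route is: a subgroup $H$ with $e(G,H)\geq 2$ is a codimension-one subgroup of $G$; Sageev's construction then produces an essential, cocompact-on-cubes action of $G$ on a CAT(0) cube complex without a global fixed point; and groups with Property~(T) have the fixed-point property for actions on CAT(0) cube complexes (equivalently, one builds from the wall structure a proper or at least unbounded affine isometric action on a Hilbert space, i.e.\ a $1$-cocycle that is not a coboundary, contradicting~(T)). None of this appears in your write-up. As a secondary remark, even read as a sketch of Theorem~\ref{thm:faj}, your Step~3 overreaches: two-sidedness of a single hypergraph is not by itself enough to conclude $e(G,H)\geq 2$; the paper additionally needs leaflessness (Lemma~\ref{lem:leaf}), a pair of hypergraphs crossing exactly once (Theorem~\ref{thm:once}), and the Ollivier--Wise Lemma~\ref{thm:numb} to extract the relative-ends count.
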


We will be mimicking the proof of the analogous theorem in Gromov's model which states that for densities $< \frac{1}{5}$ a random group in the Gromov density model does not have property (T) with overwhelming probability \cite{ow11}.

Until the end of this chapter let $G$ be a random group in the positive square model and $\cay$ its Cayley complex, that is, the universal cover of the presentation complex.

\subsection{Hypergraphs in the Cayley complex are embedded trees}

\begin{lem}\label{lem:et} In the positive square model for densities $< \frac{1}{3}$ the hypergraphs in the Cayley complex of a random group are embedded trees.
\end{lem}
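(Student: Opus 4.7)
The plan is to mimic the proof of Theorem~\ref{thm:ftree} (for the presentation complex) but replace the direct probabilistic count by the isoperimetric inequality of Theorem~\ref{thm:ie}, which with overwhelming probability controls all reduced van Kampen diagrams and is sharp enough to reach density $\tfrac{1}{3}$. Suppose, for contradiction, that some hypergraph $\Lambda\subset\cay$ fails to be an embedded tree; then $\Lambda$ contains a circuit, and I choose one of minimal length $k$. Let $F_1,\ldots,F_k$ be the $2$-cells of $\cay$ containing its successive edges; minimality forces the $F_i$ to be pairwise distinct (except $F_1=F_k$), consecutive faces to share the $1$-cell dual to $\Lambda$, and the $F_i$ to assemble into a cyclic band $B$. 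By the structure of hypergraphs in a square complex, each $F_i$ contributes two hypergraph-dual edges (each shared with a neighbour in $B$) and two side edges, so $B$ is topologically an annulus whose two boundary circles each have length $k$. Because $\cay$ is simply connected (unlike the presentation complex of Theorem~\ref{thm:ftree}, where annular and twisted diagrams had to be considered), one of these boundary circles is null-homotopic and bounds a reduced van Kampen subdiagram $D'\subset\cay$; the union $D:=B\cup D'$ is then a reduced disk diagram with $|D|=k+m'$ faces and $|\partial D|=k$, where $m'=|D'|\ge 0$.

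Applying Theorem~\ref{thm:ie} with a small $\varepsilon>0$, with overwhelming probability every such $D$ satisfies $|\partial D|\ge 4(1-2d-\varepsilon)|D|$, yielding
$$k\ \ge\ 4(1-2d-\varepsilon)(k+m').$$
Since $d<\tfrac{1}{3}$ I can choose $\varepsilon$ so small that $4(1-2d-\varepsilon)>1$; the inequality then forces $m'<0$, a contradiction. Hence with overwhelming probability no minimal circuit exists in any hypergraph of $\cay$, which proves the lemma.

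The main obstacle is controlling the band $B$ in degenerate situations: its two boundary circles need not be embedded in $\cay^{(1)}$ (side edges may coincide or corners may be identified), the minimal circuit length could be very small (for instance $k=1$ or $k=2$), and the filling $D'$ must be chosen so that $D$ is genuinely reduced and Theorem~\ref{thm:ie} applies as stated. I would dispose of these cases by exploiting that $\cay$ is a universal cover and therefore has no local reductions, together with minimality of $k$; for very small $k$ a direct combinatorial check rules out the configuration (for example $k=2$ would contradict reducedness of $\cay$). Alternatively one can avoid constructing $D'$ altogether by cutting the annulus $B$ along one side edge into a disk and invoking the fixed-edge inequality of Theorem~\ref{thm:lab}, with the $k$ hypergraph-dual edges declared as fixed edges carrying the common generator label of $\Lambda$, which produces the same isoperimetric contradiction at density $d<\tfrac{1}{3}$.
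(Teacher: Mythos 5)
Your high-level strategy (turn a hypergraph circuit into a band of squares, fill it to a disc, and apply the isoperimetric inequality) is the same as the paper's, but the paper outsources exactly the step you gloss over to Ollivier--Wise: Theorem \ref{thm:collar} (\cite[Theorem 3.5]{ow11}) supplies a \emph{reduced} disc diagram collared by a segment of the hypergraph, and only then does the computation with Theorem \ref{thm:ie} begin. In your construction the reducedness of $D=B\cup D'$ is the crux, and your proposed justification is not valid: being a diagram mapping to the Cayley complex does not make it reduced --- reducedness is a property of the diagram, not of its target --- and a face of the filling $D'$ can form a reduction pair with a band face (in degenerate situations band faces can cancel as well). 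Cancelling such a pair destroys the collar and with it the lower bound on $|D|$ in terms of $k$ on which your contradiction rests; controlling this is precisely the nontrivial content of \cite[Theorem 3.5]{ow11}, which the paper cites rather than reproves. A second gap: you assert that $B$ is an annulus, but the band carried by a circuit may be a M\"obius band --- the paper's own proof of Theorem \ref{thm:ftree} explicitly allows annular, twisted, or disc diagrams from exactly this construction --- and in the twisted case there is only one boundary circle, the filled object is non-planar, and Theorem \ref{thm:ie}, stated for planar reduced van Kampen diagrams, does not apply.

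Your fallback via Theorem \ref{thm:lab} also does not work as described. Edges dual to a hypergraph carry no common generator label in this model (in a square with boundary word $abcd$ the hypergraph joins the $a$-edge to the $c$-edge), so there is no canonical labelling to impose; and even if you fixed labels, Theorem \ref{thm:lab} bounds the fulfilling probability of one \emph{given} labelled diagram, so covering all circuits would require a union over roughly $n^{k}$ labelings, which destroys the $n^{-\varepsilon l}$ bound. In the paper, fixed edges are only usable when their labels come from conditioning on a previously drawn relator (the two-step Bayes argument in the proof of Theorem \ref{thm:once}), not by declaration. Finally, a smaller slip: when $F_1=F_k$ the band has $k-1$ faces, so $|D|=k-1+m'$ rather than $k+m'$, and small values of $k$ survive the inequality; these residual cases are what the paper eliminates with the parity observation that no diagram over length-$4$ relators has odd boundary length, whereas your remark that ``$k=2$ contradicts reducedness of $\cay$'' is an assertion, not an argument.
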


To provide the proof we need a notion of a collared diagram which was introduced by Ollivier and Wise to investigate hypergraphs in the Gromov model.

\begin{defi}
We say that a reduced van Kampen diagram $D$ is a \textit{collared diagram} if there is a vertex $v$ in the boundary such that for every other boundary vertex there is exactly one internal edge which ends in this vertex. Moreover, we assume that $v$ is the end of exactly $0$, $1$ or maximally $2$ internal edges.

Let us denote this set of internal edges by $L$. Let $\lambda \subset D$ be the hypergraph segment consisting of all edges dual to the elements of $L$.

If there is exactly one internal edge ending in $v$ we say that a diagram is \textit{cornerless}. In this case it can be easily seen that $\lambda$ is a circuit.

If the diagram is collared and not cornerless then $\lambda$ is not a loop, but there is a $2$-cell called a \textit{corner} which contains two edges of $\lambda$.

Moreover, there is a natural combinatorial map $\varphi : D \rightarrow \cay$ such that the image $\varphi(\lambda)$ is a hypergraf segment in $\cay$. For such $\lambda$ we say that $D$ is \textit{collared} by segment $\varphi(\lambda)$.  The definition is illustrated in Figure \ref{fig:col}.
\end{defi}

\begin{figure}[h!]
	\centering
		\includegraphics[scale=0.3]{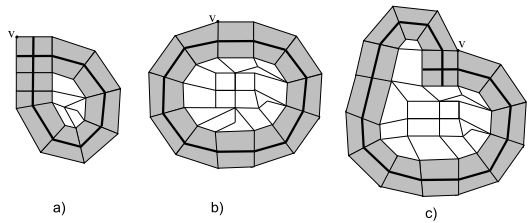}
	\caption{Collared diagrams. The segment $\lambda$ is drawn with a thick line.}
	\label{fig:col}
\end{figure}

In \cite[Definition 3.11]{ow11} Ollivier and Wise defined diagrams collared by hypergraphs and paths for an arbitrary length $l$ of relators. Their definition coincides with ours for $l=4$, the number of collaring hypergraphs equal 1 and the number of collaring paths equal 0. The following theorem shows the relation between collared diagrams and hypergraphs:

\begin{theo}[{\cite[Theorem 3.5]{ow11}}]\label{thm:collar}
Let $\Lambda$ be some hypergraph in $\cay$. The following conditions are equivalent:
\begin{enumerate}
\item $\Lambda$ is an embedded tree.
\item There is no collared diagram collared by a segment of $\Lambda$.
\end{enumerate}
\end{theo}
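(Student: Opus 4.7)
The plan is to prove the equivalence by showing both implications separately, with the harder direction being the construction of a collared diagram from a failure of embeddedness. The underlying principle is that a collared diagram is precisely the combinatorial record of a minimal witness to the non-tree structure of $\Lambda$ in $\cay$, so we should expect a bijective correspondence between such witnesses and collared diagrams up to natural equivalence.

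For the implication (2) $\Rightarrow$ (1), assume $D$ is a reduced collared diagram whose hypergraph segment $\lambda$ maps via $\varphi : D \to \cay$ into $\Lambda$. In the cornerless case, $\lambda$ is a circuit in the hypergraph of $D$, and since $D \to \cay$ is combinatorial, the induced map on hypergraphs sends $\lambda$ to a closed edge-loop of $\Lambda$, which immediately obstructs $\Lambda$ from being an embedded tree. In the cornered case, the corner $2$-cell of $D$ contains two distinct edges of $\lambda$, and these two edges are carried by $\varphi$ to two distinct hypergraph edges that lie in a common $2$-cell of $\cay$, so $\varphi$ restricted to $\Lambda$ is not an embedding.

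For the implication (1) $\Rightarrow$ (2), assume $\Lambda$ is not an embedded tree. Then either $\Lambda$ contains a circuit or two distinct edges of $\Lambda$ are mapped to the same $2$-cell of $\cay$. In either case, choose a minimal witness: a shortest circuit, or a shortest hypergraph path joining two edges that share a $2$-cell. The sequence of $2$-cells of $\cay$ traversed by this witness, glued along the $1$-cells containing the common vertices of consecutive hypergraph edges, forms a planar cell complex; in the circuit case this assembly is cornerless, while in the shared-$2$-cell case the common $2$-cell serves as the corner. The minimality of the chosen witness ensures that the assembly is homeomorphic to a disk with exactly one internal edge at each non-basepoint boundary vertex, and reducedness is inherited from $\cay$ being the simply connected universal cover of the presentation complex. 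This produces the required collared diagram, with the collaring segment mapping to the chosen piece of $\Lambda$.

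The main obstacle is verifying that the candidate diagram built from a minimal witness is genuinely an embedded disk and reduced in the sense of Definition~\ref{def:van_kampen}: one must exclude the possibility that two $2$-cells of the sequence are identified along an edge not dictated by the witness, and that no pair of adjacent $2$-cells forms a reduction pair. Both exclusions rest on the minimality assumption, since any such extra identification or reduction would project to a strictly shorter circuit (or path between co-faced hypergraph edges) in $\Lambda$, contradicting the choice of witness. Once these combinatorial checks are complete, the configuration matches the definition of being collared by $\varphi(\lambda)$ on the nose, and the equivalence follows.
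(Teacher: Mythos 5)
The paper gives no proof of this statement at all---it is quoted directly from \cite[Theorem 3.5]{ow11}---so your proposal has to be measured against the argument there, and it has a genuine gap in the hard direction (not embedded tree $\Rightarrow$ collared diagram exists). The assembly you build from a minimal witness---the $2$-cells traversed by a shortest circuit, or by a shortest segment whose first and last edges lie in a common $2$-cell, glued along the $1$-cells at consecutive dual edges---is the \emph{carrier} of the segment, and it is not a disc: in the circuit case it is an annular (or twisted) diagram, and in the cornered case it is a ladder that closes up through the corner cell, again an annulus. No amount of minimality makes this simply connected, whereas a collared diagram is by definition a van Kampen (disc) diagram. The missing idea is the filling step: the inner boundary of the annulus maps to a closed edge path in $\cay$, which is null-homotopic since $\cay$ is simply connected, hence bounds a van Kampen diagram; gluing that filling into the annulus is what produces the disc whose outer ring of cells is the collar. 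One must then argue that the resulting diagram can be taken reduced \emph{without destroying the collar}---cancellations coming from the filling could a priori eat into the ring of carrier cells---and controlling this is the technical heart of the Ollivier--Wise proof. Your remark that ``reducedness is inherited from $\cay$ being the simply connected universal cover'' conflates two unrelated things: reducedness is a property of the diagram, not of its target, and a filling of the inner boundary can certainly create reduction pairs.

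There is also a smaller gap in your easy direction, cornerless case: the image of the circuit $\lambda$ is a closed edge path in $\Lambda$, but a closed path does not obstruct $\Lambda$ from being an embedded tree unless it is non-backtracking (trees contain backtracking loops). To exclude backtracking you need reducedness of $D$: if two consecutive edges of $\lambda$ had the same image, the two faces of $D$ containing them would map to the same $2$-cell of $\cay$, matched along their common edge, and would form a reduction pair. With that observation (and the analogous check at the corner, where the two edges of $\lambda$ map to the two distinct midlines of one square, so both hypergraph edges of that $2$-cell lie in $\Lambda$) the direction ``collared diagram exists $\Rightarrow$ not an embedded tree'' goes through; the other direction needs the filling-and-reduction argument described above.
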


\begin{proof}[Proof of Lemma \ref{lem:et}]
Assume on the contrary that there is a hypergraph which is not an embedded tree. From Theorem \ref{thm:collar} we know that there is a diagram $D$ collared by some segment $\lambda$. Let $n= |\partial D|$.  For $\varepsilon < 2\left(\frac{1}{3} - d \right)$ from Theorem \ref{thm:ie} (isoperimetric inequality) we have

\begin{equation}\label{nier1}
n=|\partial D| \geq 4|D|(1-2d-\varepsilon) > \frac{4}{3}|D|
\end{equation}

We have two possibilities: either $D$ is cornerless or not. First consider the case where $D$ is cornerless. Then $|D| \geq n$. From (\ref{nier1}) we know that with overwhelming probability all collared cornerless diagrams satisfy:

\begin{equation}\label{nier2}
n > \frac{4}{3}n,
\end{equation}
which is a contradiction. Therefore, with overwhelming probability there are no such diagrams.

Let us now consider the case where the diagram $D$ is not cornerless. Then $|D| \geq n-1$. We have two possibilities $|D| \geq n$ or $|D|=n-1$. If $|D| \geq n$ we again obtain (\ref{nier2}), which is a contradiction. Therefore, with overwhelming probability there are no such diagrams. The only remaining case is where $|\partial D| = n$ and $|D| = n-1$. Again we use (\ref{nier1}) to obtain:

$$n > \frac{4}{3}(n-1).$$

It can be easily seen that for $n > 3$ this is not possible. So we only have to exclude the diagram $|D| = 2, |\partial D|=3$. But there are no diagrams with odd boundary length.
\end{proof}

\begin{lem}[{\cite[Lemma 2.3]{ow11}}]\label{lem:con}
Suppose a hypergraph $\Lambda$ is an embedded tree in $\cay$. Then $\cay - \Lambda$ consists of two connected components.
\end{lem}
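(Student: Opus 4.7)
The plan is to use Theorem~\ref{thm:HNN} in reverse: I will cut $\cay$ along $\Lambda$ to produce an auxiliary space whose gluing recovers $\cay$, and then deduce the component count from the fact that $\pi_1(\cay) = 1$. Form $\cay' := \overline{\cay \setminus \Lambda}$, the completion in the path metric, in direct analogy with the construction used in the proof of Theorem~\ref{thm:free}. This completion reinserts two disjoint isometric copies $\Lambda_1$ and $\Lambda_2$ of $\Lambda$, one on each side of every $2$-cell crossed by the hypergraph, together with a canonical homeomorphism $\phi\colon \Lambda_1 \to \Lambda_2$; by construction $\cay = \cay' *_\phi$.

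Since $\Lambda$ is an embedded tree, each $\Lambda_i$ is path-connected and simply connected, so the hypotheses of Theorem~\ref{thm:HNN} are satisfied. I then split into cases by the number of path components of $\cay'$. If $\cay'$ were path-connected, Theorem~\ref{thm:HNN} would give $\pi_1(\cay) = \pi_1(\cay') * \mathbb{Z}$, which is incompatible with $\pi_1(\cay) = 1$. If $\cay'$ had three or more components, then identifying $\Lambda_1$ with $\Lambda_2$ merges at most two of them, so some component of $\cay'$ would persist as an extra component of $\cay$, contradicting that $\cay$ is connected. Hence $\cay'$ has exactly two components, and $\Lambda_1$, $\Lambda_2$ lie in different ones.

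To finish, the quotient map restricts to a homeomorphism $\cay' \setminus (\Lambda_1 \cup \Lambda_2) \to \cay \setminus \Lambda$, so it suffices to count components on the cut side. Each $\Lambda_i$ lies on the topological boundary of its component of $\cay'$ and is a tree, so removing it cannot separate that component. Thus $\cay \setminus \Lambda$ inherits exactly two connected components, as required.

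The main obstacle is the first step: verifying rigorously that the metric completion $\cay'$ really produces two disjoint copies of $\Lambda$ arranged so that $\cay = \cay' *_\phi$, rather than a single identified copy or some more complicated doubling. Concretely, this reduces to checking that at every interior point of $\Lambda$ the two local sides inside the carrier of $\Lambda$ remain globally distinguishable, which in turn uses that $\Lambda$ is an embedded tree: there is no cycle of $\Lambda$ along which a local side-choice could be flipped. Once this point-set verification is in place, the remaining reasoning is a direct application of Theorem~\ref{thm:HNN} and elementary topology.
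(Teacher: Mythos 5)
Your argument is correct in substance, but it takes a genuinely different route from the paper. The paper's proof is a one-line homological argument: apply Mayer--Vietoris to the cover of $\cay$ by a neighbourhood $N$ of $\Lambda$ and the complement $\cay - \Lambda$; since $H_1(\cay)=0$ and $N$ deformation retracts to the connected tree $\Lambda$, one gets $\widetilde{H}_0(\cay - \Lambda) \cong \widetilde{H}_0(N - \Lambda)$, and $N - \Lambda$ has exactly two components because the embedded tree $\Lambda$ is two-sided. You instead cut $\cay$ along $\Lambda$ and run Theorem~\ref{thm:HNN} in reverse, using $\pi_1(\cay)=1$ to exclude the connected case and connectivity of $\cay$ to exclude three or more components --- the same cut-and-reglue device the paper uses in the proof of Theorem~\ref{thm:free}, now applied to the Cayley complex. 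Both proofs hinge on the same geometric fact, which you correctly isolate as the main obstacle: since $\Lambda$ is a simply connected embedded tree, the ``side'' double cover of $\Lambda$ inside its carrier is trivial, so $\overline{\cay - \Lambda}$ contains two disjoint copies of $\Lambda$; in the paper's version this is exactly what makes $N-\Lambda$ have two components. Your route buys a $\pi_1$-level statement consistent with the HNN machinery already present in the paper, while the Mayer--Vietoris argument is shorter and sidesteps the point-set analysis of the completion. Two places to tighten: in the two-component case you should say explicitly why $\Lambda_1$ and $\Lambda_2$ cannot lie in the same component (otherwise the other component of $\cay'$ would survive as a second component of $\cay$ --- the same argument as your three-or-more case); and ``lies on the topological boundary, so removing it cannot separate'' is not a valid general principle (two disks glued at a single boundary point give a counterexample) --- the correct justification is that $\Lambda_i$ has a collar neighbourhood homeomorphic to $\Lambda_i \times [0,1)$ in its component of $\cay'$, along which any path meeting $\Lambda_i$ can be pushed off it.
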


\begin{proof}
This follows easily from the fact that $H_1(\cay) = 0$ and from a Mayer-Vietoris sequence argument applied to the complement of the hypergraph and a neighberhood of the hypergraph.
\end{proof}

\subsection{Hypergraphs are quasi-isometrically embedded}

Now we are going to prove that hypergraphs are quasi-isometrically embedded trees. To do that we need to generalize the notion of a collared diagram.

\begin{defi}[Diagram collared by segment and path]
Let $D$ be a reduced van Kampen diagram and let $x_1, \dots, x_n$ be all the vertices on its boundary in that order. Suppose that for some $2 \leq i \leq n-2$ the following holds: for every $i+1 \leq k \leq n$ there is exactly one internal edge $e_k$ ending in $x_k$. Moreover, we assume that there are no internal edges ending in $x_1$ and $x_i$

It can be easily seen that there exists a hypergraph segment $\lambda$ in $D$ such that edges $e_k$ are dual to $\lambda$ and edges $x_1x_2$ and $x_{i-1}x_i$ are also dual to $\lambda$. Define path $\gamma = (x_1, x_2, \dots, x_i )$. There is a natural combinatorial map $\varphi : D \rightarrow \cay$ such that $\varphi(\lambda)$ is a hypergraph segment in $\cay$ and $\varphi(\gamma)$ is a path in $\cay^1$ joining $\varphi(x_1)$ and $\varphi(x_i)$. In such a case we say that $D$ is \textit{collared by segment $\varphi(\lambda)$ and path $\varphi(\gamma)$}.
\end{defi}

\begin{figure}[h!]
	\centering
		\includegraphics[scale=0.6]{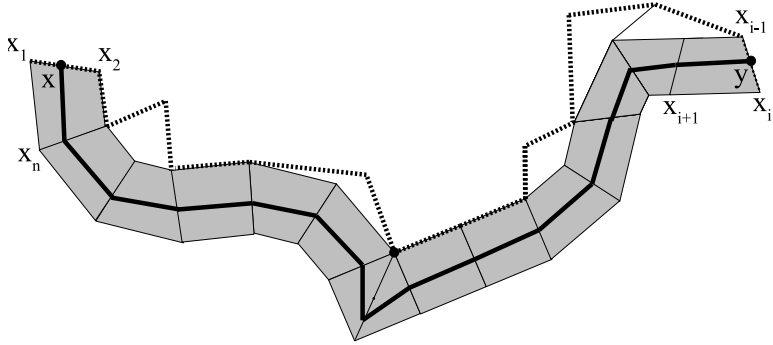}
	\caption{Diagram collared by hypergraph and path. The hypergraph is drawn with a solid thick line and the path with a dotted thick line.}
	\label{fig:col_c}
\end{figure}

Our definition coincides with the one given by Ollivier and Wise in \cite[Definition 3.11]{ow11} for the length of relators equal 4, the number of collaring segments equal 1 and the number of of collaring paths equal 1.

\begin{lem}[{\cite[Lemma 3.17]{ow11}}]\label{thm:cp}
Let $\Lambda$ be a hypergraph that is an embedded tree in $\cay$. Let $\lambda$ be a segment of $\Lambda$. Let $\gamma$ be an embedded path in $\cay^1$ such that the first and the last edge of $\gamma$ are dual to $\lambda$, but $\gamma$ does not intersect $\Lambda$ anywhere except at the first and the last edge. Then there is a diagram collared by the segment $\lambda$ and path $\gamma$.
\end{lem}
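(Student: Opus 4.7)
The plan is to construct $D$ as the union of the carrier of $\lambda$ with a disc diagram filling the gap between $\gamma$ and the carrier's side facing $\gamma$.

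Let $F_1, \dots, F_m$ be the $2$-cells of $\cay$ through which $\lambda$ passes, and let $C = F_1 \cup \cdots \cup F_m$. By Lemma \ref{lem:con}, $\cay - \Lambda$ has exactly two connected components; since $\gamma$ crosses $\Lambda$ precisely twice, its interior vertices lie in one component while its endpoints $y_0$ and $y_\ell$ lie in the other. The boundary of $C$ decomposes accordingly as $e_0 \cdot \beta \cdot e_m \cdot \alpha^{-1}$, where $e_0, e_m$ are the first and last dual edges of $\lambda$ (coinciding with the first and last edges of $\gamma$), $\alpha$ is the side path of $C$ running from $y_0$ to $y_\ell$ in the component of $y_0, y_\ell$, and $\beta$ is the side path running from $y_1$ to $y_{\ell-1}$ in the opposite component.

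The interior portion of $\gamma$ concatenated with $\beta^{-1}$ is a closed loop in the simply connected complex $\cay$, and so it bounds a van Kampen diagram; let $D_0$ be one of minimal area. Form $D := C \cup_{\beta} D_0$ by gluing $D_0$ to $C$ along $\beta$. Its boundary then reads $\gamma$ from $y_0$ to $y_\ell$ followed by $\alpha^{-1}$ back to $y_0$. At each interior vertex of $\alpha$, which is a corner shared by consecutive $F_j$ and $F_{j+1}$, the only edge running into the interior of $D$ is the common dual edge $f_j$, and $f_j$ is dual to $\lambda$ by construction; at $y_0$ and $y_\ell$ only boundary edges meet. Hence $D$ has exactly the combinatorial structure of a diagram collared by $\lambda$ and $\gamma$, with the hypergraph segment in $D$ running through $F_1, \dots, F_m$ and mapping onto $\lambda$ under the natural combinatorial map $\varphi : D \to \cay$.

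The main obstacle is arranging that $D$ is reduced. A reduction pair of $D$ must lie inside $D_0$, inside $C$, or straddle $\beta$. Pairs inside $D_0$ are excluded by minimality of its area, and pairs straddling $\beta$ can be removed by the standard operation of identifying the two offending faces and collapsing their shared edge, which leaves both $\gamma$ and $\alpha$ untouched while strictly decreasing the area. A reduction pair $(F_j, F_{j+1})$ inside $C$ itself is the delicate case: it would force the two cells to bear the same relator with opposite orientations along $f_j$, and after collapsing them the carrier would shorten with the two hypergraph edges of $\lambda$ through $F_j, F_{j+1}$ identified. Since $\Lambda$ is an embedded tree, we may assume from the outset that $\lambda$ is chosen among segments with the prescribed first and last dual edges to have the shortest possible carrier, which rules out this case. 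Finite iteration of the available reductions then yields the required reduced diagram collared by $\lambda$ and $\gamma$.
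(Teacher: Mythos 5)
Your construction (the abstract ladder of squares carrying $\lambda$, glued along its inner side to a minimal-area filling of the loop formed by the interior portion of $\gamma$ and the inner side of the ladder) is the same outline as in Ollivier--Wise; note the paper itself does not prove this lemma but cites \cite[Lemma 3.17]{ow11}. The genuine gap is in the reducedness analysis, which is exactly where the content of the lemma lies. Your claim that a reduction pair straddling $\beta$ can be cancelled ``leaving both $\gamma$ and $\alpha$ untouched'' is not true as stated: such a pair contains a carrier square $F_j$, and cancelling it removes $F_j$ and re-glues $\partial F_j$ minus the shared edge onto the corresponding edges of the partner face, so the cell structure along the collar changes and the defining condition of a collared diagram (each vertex of the $\alpha$-side meets exactly one internal edge, dual to the collaring segment) can fail afterwards. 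You would need either to rule such pairs out or to prove the collared structure survives; neither is done. One subcase can be excluded: if the two faces map to distinct $2$-cells of $\cay$, then, since boundary paths with the same label issuing from the same edge of the Cayley graph coincide, the two $2$-cells have equal boundary cycles, so the hypergraph edge in the partner cell joins the same two dual $1$-cells as the one in $F_j$, giving a double edge and hence a cycle in $\Lambda$, impossible for a tree. But the subcase where both faces map to the \emph{same} $2$-cell of $\cay$ is not touched by this, and it is precisely the situation in which cancelling can destroy the collar; your proof is silent on it.

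Your treatment of a reduction pair of two consecutive carrier squares is also specious: since $\Lambda$ is a tree, the segment joining the two prescribed dual $1$-cells is unique, so ``choosing $\lambda$ with the shortest possible carrier'' is an empty minimization and rules out nothing. The correct reason is the coincidence argument above: a mirror pair $F_j$, $F_{j+1}$ across the rung $f_j$ would have equal boundary cycles in $\cay$, forcing $f_{j-1}=f_{j+1}$ and producing two distinct edges of $\Lambda$ with the same endpoints, contradicting that $\Lambda$ is a tree; so such pairs simply cannot occur, but for a reason you did not give. Two smaller points: the carrier should be the abstract ladder mapped to $\cay$, not the literal union $F_1\cup\dots\cup F_m$ inside $\cay$, which may carry extra identifications and need not admit the boundary decomposition $e_0\cdot\beta\cdot e_m\cdot\alpha^{-1}$; and degenerate configurations where the interior portion of $\gamma$ runs along $\beta$ or $\alpha$ are not addressed. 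As it stands you have produced a diagram with the right boundary and a collar-like ladder, but not a \emph{reduced} collared diagram, which is what the statement requires and what its later applications via the isoperimetric inequality actually use.
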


\begin{lem}\label{lem:qi} Let $\Lambda$ be a hypergraph in $\cay$ and let $x, y$ be its vertices. By $d_{\Lambda}(x,y)$ we denote the length of the shortest edge path in $\Lambda$ joining $x$ and $y$. By $d_{\cay^1}(x,y)$ we denote the distance in $\cay^1$ between $x$ and $y$. With overwhelming probability, in the positive square model, for every hypergraph $\Lambda$ and every pair of its vertices $x,y$ we have
\begin{equation}\label{eq:iq}
d_{\cay^1}(x,y) \geq \frac{1}{12}d_{\Lambda}(x,y).
\end{equation}
\end{lem}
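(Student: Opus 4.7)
The plan is to bound $L := d_{\cay^1}(x,y)$ below by decomposing a geodesic between $x$ and $y$ at its crossings with $\Lambda$ and applying the isoperimetric inequality to each piece via the collared-by-segment-and-path construction of Lemma \ref{thm:cp}.

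Choose an embedded geodesic edge-path $\gamma$ in $\cay^1$ realizing $d_{\cay^1}(x,y)$, with first and last $1$-cells $e_x,e_y$ containing $x$ and $y$. List the $1$-cells of $\gamma$ that are dual to $\Lambda$ in order as $e_x = f_0, f_1, \dots, f_k = e_y$, with midpoints $z_0 = x, z_1, \dots, z_k = y \in \Lambda$. Since by Lemma \ref{lem:et} the hypergraph $\Lambda$ is an embedded tree (for $d<\tfrac13$), there is a unique segment $\lambda_j$ of $\Lambda$ from $z_j$ to $z_{j+1}$; write $M_j := d_\Lambda(z_j,z_{j+1})$, so the tree triangle inequality gives $\sum_j M_j \geq M := d_\Lambda(x,y)$. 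Let $\gamma_j$ be the subpath of $\gamma$ from $f_j$ to $f_{j+1}$ inclusive, of length $L_j$; it is embedded, its first and last edges are dual to $\lambda_j$, and it meets $\Lambda$ at no other $1$-cell. Lemma \ref{thm:cp} then yields a reduced van Kampen diagram $D_j$ collared by $\lambda_j$ and $\gamma_j$, and reading off the collared-by-segment-and-path boundary structure gives $|D_j| \geq M_j$ (one face per hypergraph edge of $\lambda_j$) together with $|\partial D_j| = M_j + L_j$ (the $M_j$ boundary edges on the far side of $\lambda_j$ plus the embedded path $\gamma_j$).

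Fix $\varepsilon>0$ so small that $3 - 8d - 4\varepsilon > \tfrac13$, which is possible because $d<\tfrac13$. Theorem \ref{thm:ie} then gives, with overwhelming probability, $|\partial D_j| \geq 4(1-2d-\varepsilon)|D_j|$ for every reduced van Kampen diagram, so
\[
M_j + L_j \;\geq\; 4(1-2d-\varepsilon)\,M_j, \qquad \text{hence} \qquad L_j \;\geq\; (3-8d-4\varepsilon)\,M_j.
\]
Consecutive subpaths $\gamma_{j-1},\gamma_j$ share exactly the edge $f_j$, whence $\sum_j L_j = L + (k-1)$; and each $L_j \geq 2$ forces $k \leq L-1$. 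Combining,
\[
L \;\geq\; (3-8d-4\varepsilon)\sum_j M_j \;-\; (k-1) \;\geq\; (3-8d-4\varepsilon)\,M \;-\; (L-2),
\]
which rearranges to $L \geq \tfrac{1}{2}(3-8d-4\varepsilon)\,M > \tfrac{1}{6}M > \tfrac{1}{12}M$, as claimed.

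The step I expect to require the most care is verifying the two identifications $|D_j| \geq M_j$ and $|\partial D_j| = M_j + L_j$ directly from the ``collared by segment and path'' definition---in particular, arguing that the collaring map $\varphi:D_j \to \cay$ is injective on the $\gamma_j$-side of $\partial D_j$ so that this side has length exactly $L_j$ (not merely at least $L_j$). This injectivity comes from the embeddedness of the geodesic subpath $\gamma_j$ and the fact that $\gamma_j$ meets $\Lambda$ only at its two extreme edges. Once those identifications are in hand, the density threshold $\tfrac13$ is precisely the value at which $3 - 8d$ stays positive, and the loose constant $\tfrac{1}{12}$ in the statement leaves comfortable margin for the factor-of-two loss from the overlap correction.
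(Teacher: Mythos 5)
Your argument is correct and follows essentially the same route as the paper's proof: for $d<\tfrac13$ the hypergraphs are embedded trees (Lemma \ref{lem:et}), the $\cay^1$-geodesic is decomposed at its crossings with $\Lambda$, Lemma \ref{thm:cp} yields a diagram collared by a segment and a path for each piece, and the isoperimetric inequality (Theorem \ref{thm:ie}) together with additivity over the pieces gives the bound --- the paper merely phrases the decomposition as a reduction to the case of exactly two intersection points and handles short segments ($k\le 3$) by inspection. The only quibble is an off-by-one in your bookkeeping (the edge path through the full end $1$-cells has length $d_{\cay^1}(x,y)+1$, so $\sum_j L_j = L+k$ rather than $L+(k-1)$ with $L=d_{\cay^1}(x,y)$), which is harmless since the resulting constant still comfortably exceeds $\tfrac{1}{12}$.
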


\begin{proof}
Let $\Lambda$ be a hypergraph in $\cay$ and $x, y$ be vertices of $\Lambda$. Let $\lambda$ be a geodesic in $\Lambda$ joining $x$ and $y$. Let $\gamma$ be a geodesic in $\cay^1$ joining $x$ and $y$.  Let $x_0$ be that end of the edge containing $x$ which does not belong to $\gamma$, and define $y_0$ analogously. Consider the path $\gamma_0 = x_0 x \cup \gamma \cup y y_0$.

Notice that it is sufficient to prove the statement under the additional hypothesis that $\gamma$ intersects $\lambda$ only in $x$ and $y$. If this is not the case we can cut our path into smaller pieces such that each piece intersects $\lambda$ in exactly two points. The inequality in the statement is additive under summing such pieces.

From Lemma \ref{lem:et} we know that with overwhelming probability hypergraphs are embedded trees. Therefore, by Lemma \ref{thm:cp} there exists a diagram $E$ collared by $\lambda$ and $\gamma_0$.

Denote $k = |\lambda|$. Let $C$ be the carrier of $\lambda$. Note that the boundary of $E$ consists of two paths in $\cay^1$ joining $x$ and $y$: we denote them by $\gamma$ and $\psi$. Note that $\psi \subset \partial C$. Therefore, $|\psi| = k+1$. From Theorem \ref{thm:ie} (isoperimetric inequality) we have that with overwhelming probability:

$$|\gamma| + |\psi| > \frac{4}{3}k $$

Since $|\psi| = k+1$ we have

$$|\gamma| + k + 1 >\frac{4}{3}k,$$

which is equivalent to

$$|\gamma| > \frac{1}{3}k - 1$$

If $k \geq 4$, then $\frac{1}{3}k - 1 \geq \frac{1}{12}k$ and we are done. Therefore, the remaining cases are $k=1$, $k=2$, $k=3$. Since $|\gamma| \geq 1$, each of these cases satisfies the inequality (\ref{eq:iq}).
\end{proof}

\begin{cor}\label{cor:sub}
In the positive square model for random groups at density $d \leq \frac{1}{3}$ with overwhelming probability the stabilizer of any hypergraph is a free, quasiconvex subgruop.
\end{cor}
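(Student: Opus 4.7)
The plan is as follows. Fix a hypergraph $\Lambda$ in the Cayley complex $\cay$ and set $H := \mathrm{Stab}_G(\Lambda)$. By Corollary \ref{cor:hyp} the group $G$ is hyperbolic with overwhelming probability, and by Lemma \ref{lem:et} the hypergraph $\Lambda$ is an embedded tree. The strategy is to deduce freeness from the action of $H$ on the tree $\Lambda$ and quasiconvexity from the quasi-isometric embedding of $\Lambda$ into $\cay^1$ provided by Lemma \ref{lem:qi}.

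\emph{Freeness.} The group $G$ acts freely on the $0$-skeleton of $\cay$, and the vertices of $\Lambda$ are midpoints of $1$-cells of $\cay$. A nontrivial $g \in H$ fixing a vertex of $\Lambda$ would therefore have to swap the two endpoints of the corresponding $1$-cell, forcing $g^2 = e$. I would next argue that random groups in the positive square model at density $d < \tfrac{1}{2}$ are torsion-free with overwhelming probability: the existence of an order-two element produces a reduced van Kampen diagram whose boundary label is a proper square, and a direct application of Theorem \ref{thm:ie} (or its variant with fixed edges, Theorem \ref{thm:lab}) rules such a diagram out on boundary-length grounds. Hence $H$ acts freely on the tree $\Lambda$, and so by the standard Bass--Serre argument $H$ is a free group.

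\emph{Quasiconvexity.} I next verify that $H$ acts properly and cocompactly on $\Lambda$. Properness is immediate from the freeness of the $G$-action on $\cay$. For cocompactness, the key observation is that every $1$-cell of $\cay$ is dual to a unique hypergraph: if $g \in G$ carries some vertex of $\Lambda$ to a vertex of $\Lambda$, then $g$ takes the $1$-cell underlying the first vertex to the $1$-cell underlying the second, and hence sends the unique hypergraph through the first $1$-cell (which is $\Lambda$) to the unique hypergraph through the second (also $\Lambda$); thus $g \in H$. Consequently the quotient $\Lambda / H$ embeds into the image of $\Lambda$ under the covering map $\cay \to X$, and this image is finite since $X$ has finitely many cells. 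By the Milnor--Schwarz lemma the orbit map $H \to \Lambda$ is then a quasi-isometry. Composing with the quasi-isometric embedding $\Lambda \hookrightarrow \cay^1$ supplied by Lemma \ref{lem:qi}, we obtain that $H$ is quasi-isometrically embedded in the Cayley graph $\cay^1$ of $G$. Since $G$ is hyperbolic, a quasi-isometrically embedded subgroup is exactly a quasiconvex one, which finishes the proof.

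\emph{Main obstacle.} The two ingredients that require care are the torsion-freeness of $G$ and the cocompactness of the $H$-action on $\Lambda$. The former is standard for random groups at density $<\tfrac{1}{2}$ via the isoperimetric inequality, while the latter rests on the uniqueness of the hypergraph through each $1$-cell of $\cay$; once both are in hand, Lemma \ref{lem:qi} combined with the Milnor--Schwarz lemma delivers quasiconvexity routinely.
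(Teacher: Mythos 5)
Your overall route is the same as the paper's: freeness from the action of the stabilizer on the tree $\Lambda$ (Lemma \ref{lem:et} plus Serre), and quasiconvexity from hyperbolicity (Corollary \ref{cor:hyp}) together with the quasi-isometric embedding of Lemma \ref{lem:qi}; your cocompactness/Milnor--Schwarz discussion just makes explicit what the paper leaves implicit, and your observation that each $1$-cell is dual to a unique hypergraph (so $\Lambda/\mathrm{Stab}(\Lambda)$ is finite) is correct and fills a real gap in the terse published proof.

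The one step I would not accept as written is the appeal to torsion-freeness. You assert that an order-two element ``produces a reduced van Kampen diagram whose boundary label is a proper square'' which ``a direct application of Theorem \ref{thm:ie}'' excludes; but the isoperimetric inequality bounds $|\partial D|$ from below by the number of faces, and a diagram with boundary word $w^2$ for a long word $w$ need not violate any such bound -- ruling out torsion at density $<\tfrac12$ is a genuine theorem (Ollivier's argument uses minimality of $w$ in its conjugacy class and a finer diagram analysis), and nothing of the sort is proved in this paper. Fortunately the detour is unnecessary: the deck transformation action of $G$ on $\cay$ is free on all of $\cay$, not only on the $0$-skeleton, so an element of $\mathrm{Stab}(\Lambda)$ that preserved a $1$-cell while swapping its endpoints would fix the midpoint of that $1$-cell and hence be trivial. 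The same remark disposes of the case you did not treat, namely an element inverting an edge of the tree $\Lambda$ (it would preserve the corresponding $2$-cell, and a suitable power would fix a point of $\cay$), so the action of $\mathrm{Stab}(\Lambda)$ on $\Lambda$ is free and without inversions and Bass--Serre applies directly. With that repair your proof is complete and matches the paper's intended argument.
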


\begin{proof}
Since hypergraphs are trees in $\cay$, their stabilizers act freely on a tree. Groups acting freely on trees are free \cite[Theorem 4]{serr}. Moreover, random groups for densities $< \frac{1}{2}$ are hyperbolic. Now a quasi-isometrically embedded space in a hyperbolic space is quasiconvex.
\end{proof}

\subsection{Pair of hypergraphs which intersect only once}

We now introduce a new type of diagram:

\begin{defi}[Diagram collared by two segments]
Let $D$ be a reduced van Kampen diagram $D$ and let $x_1, \dots, x_n$ be all the vertices on its boundary in that order. Suppose that for some $2 \leq i \leq n-2$ the following holds: for every $k \in  \{2, \dots, i-1 \} \cup \{ i+1, \dots, n\}$ there is exactly one internal edge $e_k$ ending in $x_k$. Moreover, we assume that for $v \in \{x_1, x_i \}$ there are exactly 0 or 2 internal edges ending in $v$.

It can be easily seen that there are two hypergraph segments $\lambda_1$, $\lambda_2$ in $D$ such that for $k \in \{2, \dots, i-1 \}$ edges $e_k$ are dual to $\lambda_1$ and for $k \in \{i+1, \dots, n\}$ edges $e_k$ are dual to $\lambda_2$ and internal edges ending in $x_1$ and $x_i$ are dual to both segments. There are exactly two cells containing edges of both segments $\lambda_1$, $\lambda_2$, called \textit{corners}. There is a natural combinatorial map $\varphi : D \rightarrow \cay$ such that $\varphi(\lambda_1)$ and $\varphi(\lambda_2)$ are hypergraph segments in $\cay$. In such a case we say that $D$ is \textit{collared by segments $\varphi(\lambda_1)$ and $\varphi(\lambda_2)$}.
\end{defi}

\begin{figure}[h!]
	\centering
		\includegraphics[scale=0.5]{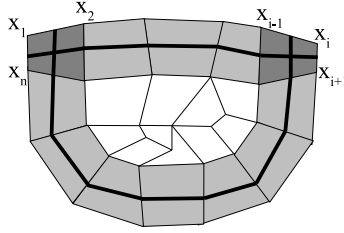}
	\caption{Diagram collared by two segments. The segments are drawn with a solid thick line and the corners are highlighted in dark-gray.}
	\label{fig:diagi}
\end{figure}

Our definition coincides with the one given by Ollivier and Wise in \cite[Definition 3.11]{ow11} for the length of relators equal 4, the number of collaring segments equal 2 and the number of collaring paths equal 0.

\begin{lem}[{\cite[Lemma 3.12]{ow11}}]\label{thm:segments}
Let $\Lambda_1$ and $\Lambda_2$ be two distinct hypergraphs in $\cay$ that are embedded trees. There is more than one point in $\Lambda_1 \cap \Lambda_2$ if and only if there exists a reduced diagram $E$ collared by segments of $\Lambda_1$ and $\Lambda_2$. Moreover, if $\Lambda_1$ and $\Lambda_2$ cross at a 2-cell $C$ we can choose $E$ so that $C$ is one of these corners.
\end{lem}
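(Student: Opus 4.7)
The plan is to prove the two implications separately, and then observe that the construction behind the nontrivial direction can always be arranged so that a prescribed crossing $2$-cell plays the role of one of the corners. Both directions rest on facts already at hand: by Lemma \ref{lem:et} the hypergraphs $\Lambda_1,\Lambda_2$ are embedded trees in $\cay$, and $\cay$ is simply connected.

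For the easy direction, suppose $E$ is a reduced diagram collared by segments $\lambda_1\subset \Lambda_1$ and $\lambda_2\subset \Lambda_2$. By the definition of a diagram collared by two segments, $E$ has two corner $2$-cells, each containing an edge of $\lambda_1$ and an edge of $\lambda_2$. Under the natural combinatorial map $\varphi\colon E\to \cay$ these corners map to $2$-cells of $\cay$ at which $\Lambda_1$ and $\Lambda_2$ cross, and they must land on distinct $2$-cells: if the two corners collapsed to the same $2$-cell of $\cay$, then the carrier strips running between them on either side of the diagram would contain a reduction pair, contradicting reducedness of $E$. Hence $|\Lambda_1\cap \Lambda_2|\geq 2$.

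For the converse, assume $\Lambda_1$ and $\Lambda_2$ share two distinct crossing $2$-cells $C,C'$ of $\cay$. Since $\Lambda_1$ is an embedded tree there is a unique arc $\lambda_1\subset \Lambda_1$ joining the vertex of $\Lambda_1$ inside $C$ to the one inside $C'$; similarly we get a unique arc $\lambda_2\subset \Lambda_2$. Their carriers $N_1$ and $N_2$ are two chains of $2$-cells sharing $C$ and $C'$. The boundary of $N_1\cup N_2$ splits as two edge-loops meeting exactly at $C$ and $C'$; one of these loops bounds a disk-shaped region $R$ in $\cay$. By simple connectivity of $\cay$ the region $R$ can be filled with a van Kampen diagram $E_0$, and gluing $E_0$ to $N_1\cup N_2$ along the shared boundary produces a van Kampen diagram $E$ whose boundary combinatorics fit exactly the definition of a diagram collared by segments $\lambda_1$ and $\lambda_2$, with $C$ and $C'$ as its two corners. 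The moreover clause is automatic: take the prescribed crossing $2$-cell as $C$ and any other intersection $2$-cell as $C'$.

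The main obstacle is that the diagram $E$ produced above need not be reduced. To fix this one cancels reduction pairs successively; when a cancellation involves a $2$-cell of a carrier $N_i$, the arc $\lambda_i$ must be replaced by a shorter subsegment. It is here that the embedded-tree property of $\Lambda_i$ is essential: it forbids $\lambda_i$ from retracing itself under shortening, so the reduction process terminates with a genuine reduced diagram that is still collared by (shortened) segments of $\Lambda_1$ and $\Lambda_2$. The final diagram retains two distinct corners, because if reduction collapsed one corner onto the other we would obtain $C=C'$, contradicting the choice of two distinct crossing $2$-cells.
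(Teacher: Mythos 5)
The paper does not actually prove this statement: it is imported verbatim from Ollivier--Wise (\cite[Lemma 3.12]{ow11}), specialized to relator length $4$, one segment of each hypergraph and no collaring paths. So the benchmark is the Ollivier--Wise argument, and your outline follows the same strategy (take two crossing $2$-cells, join them by segments in the two trees, glue the two carrier ladders to a filling of the loop between them, then reduce). The problem is that the two places where you wave your hands are exactly the places where the real work sits. First, the construction: the carriers $N_1,N_2$ need not meet only at $C$ and $C'$ --- the segments $\lambda_1,\lambda_2$ may cross at further $2$-cells, so you must first pass to an ``adjacent'' pair of crossings (or absorb this into the later argument), and even then the phrase ``one of these loops bounds a disk-shaped region $R$ in $\cay$'' has no meaning in a $2$-complex: what one actually does is observe that the loop in $\cay^1$ is nullhomotopic and invoke van Kampen's lemma to get a filling diagram, and one must then check that gluing this filling to the (abstract) ladders yields a planar disk diagram whose boundary combinatorics are those of a diagram collared by two segments.

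Second, and more seriously, the reduction step is asserted rather than proved. Termination of cancellation is trivial (area decreases); what is not trivial is that after cancelling a reduction pair the result is still a diagram collared by segments of $\Lambda_1$ and $\Lambda_2$: a cancellation involving a collar cell (in particular a corner cell) changes the boundary combinatorics, can merge or destroy the distinguished corners, and can detach the hypergraph segments from $\Lambda_1,\Lambda_2$; repairing the collared structure --- and keeping the prescribed cell $C$ as a corner for the ``moreover'' clause --- is the substantive content of the Ollivier--Wise proof, and your sentence ``if reduction collapsed one corner onto the other we would obtain $C=C'$'' is a non sequitur (cancelling a corner against a neighbouring cell does not identify the two corners; it simply produces a diagram of a different shape about which you have proved nothing). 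Finally, in the easy direction your justification that the two corners have distinct images (``the carrier strips ... would contain a reduction pair'') is not an argument as stated; the correct route is to note that reducedness forces $\varphi(\lambda_i)$ to be a non-backtracking path in the embedded tree $\Lambda_i$ (backtracking would exhibit a reduction pair across the shared dual $1$-cell), and a nontrivial immersed path in a tree has distinct endpoints, so the two corners map to distinct crossing cells. With these points filled in you would essentially be reproving \cite[Lemma 3.12]{ow11}; as written, the proposal is an outline of that proof with its key steps missing.
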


\begin{theo}\label{thm:once}
With overwhelimg probability, in the positive sqaure model, there exists a pair o hypergraphs $\Lambda_1$, $\Lambda_2$ in $\cay$ such that $\Lambda_1 \cap \Lambda_2$ intersect only once.
\end{theo}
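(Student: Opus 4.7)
The plan is to show that, with overwhelming probability, there is a 2-cell $\tilde F$ of $\cay$ whose two crossing hypergraphs $\Lambda_1, \Lambda_2$ meet only at $\tilde F$. By Lemma \ref{thm:segments} this amounts to finding a 2-cell $F$ of the presentation complex $X$ whose lifts are not corners of any reduced van Kampen diagram collared by two hypergraph segments. Call such an $F$ \emph{good}, and its complement \emph{bad}; the aim is to bound the expected number of bad 2-cells strictly below $N = \lfloor n^{4d} \rfloor$, so that a good cell exists with overwhelming probability.

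First I would verify that any diagram $E$ collared by two segments with $k = |E|$ two-cells has $|\partial E| = 2k$: each non-corner cell contributes its two 1-cells not dual to the collaring segment, and each corner cell contributes the two outward 1-cells of its two hypergraph passages. Combined with Theorem \ref{thm:ie}, $|\partial E| \geq 4(1 - 2d - \varepsilon)|E|$ forces $d \geq 1/4 - \varepsilon/2$; the case $d < 1/4$ is thus handled outright, since collared-by-two diagrams cannot exist and every 2-cell is good.

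For $d \in [1/4, 1/3)$, I would apply the generalized isoperimetric inequality (Theorem \ref{thm:lab}) treating the four 1-cells of a designated corner $F$ as $K = 4$ fixed edges. The condition $|\partial A| - 2K \geq |A|l(1 - 2d - \varepsilon)$ becomes $k \geq 4/(4d + 2\varepsilon - 1)$, and for $k$ below a threshold $k_0(d,\varepsilon)$ each abstract diagram is fulfilled with probability at most $n^{-4\varepsilon}$. Since the number of abstract collared-by-two-segments shapes with $k$ cells is at most $O(k)$ (indexed by the partition $k + 2 = m_1 + m_2$), the small-$k$ contribution to the expected number of bad cells is of order $O(N \cdot k_0^2 \cdot n^{-4\varepsilon}) = o(N)$ for $\varepsilon > 0$.

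For $k \geq k_0$, where Theorem \ref{thm:lab} with $K = 4$ no longer gives a useful bound, I would iterate the fixed-edge strategy, e.g.\ fixing both corners' edges ($K = 8$) and extending the applicable range to $k < 8/(4d + 2\varepsilon - 1)$, and, if necessary, fixing additional edges along the collared segments. Lemma \ref{lem:parzy} (even valence at internal vertices in the positive square model) further restricts the admissible abstract shapes. The combined contributions should keep the total expected number of bad cells $o(N)$. By Markov's inequality, $\Pr[\text{every cell bad}] \leq E[\#\text{bad}]/N = o(1)$, so with overwhelming probability some cell $F$ is good, and any lift $\tilde F \in \cay$ gives the required pair of hypergraphs meeting only at $\tilde F$.

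The main obstacle is the large-$k$ regime: obtaining a $o(N)$ bound on the contribution of collared-by-two-segments diagrams with many cells when $d \in [1/4, 1/3)$, where the basic isoperimetric inequality is no longer strong enough. This will require a careful interplay of fixed-edge strategies in Theorem \ref{thm:lab} with the even-valence constraint from Lemma \ref{lem:parzy} specific to the positive square model.
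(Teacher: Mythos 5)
The decisive error is the structural identity in your second paragraph: a reduced diagram $E$ collared by two hypergraph segments $\lambda_1,\lambda_2$ does \emph{not} satisfy $|\partial E| = 2|E|$. The correct relations, which the paper's proof runs on, are $|\partial E| = |\lambda_1| + |\lambda_2|$ together with $|E| \geq |\lambda_1| + |\lambda_2| - 2$: the carrier of the two segments is a closed chain of $2$-cells, two of them corners, and each chain cell contributes essentially one outer edge to $\partial E$ (the inward-facing edges are internal or identified), so $|\partial E| \leq |E| + 2$ -- already the four-cell shape with $|\partial E| = 6$ in the paper's figure contradicts $|\partial E| = 2|E|$. This inequality goes in the opposite direction from yours, and that is exactly what makes the theorem provable up to $d < \frac{1}{3}$: combining $|\partial E| \leq |E|+2$ with Theorem \ref{thm:ie} gives $|\partial E| \geq \frac{4}{3}(|\partial E| - 2)$, hence $|\partial E| \leq 6$ and $|E| \leq 4$, so there are only three possible $2$-collared shapes (one of which is killed by the parity Lemma \ref{lem:parzy}), uniformly in $n$. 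In other words, there is no large-$k$ regime at all; since your proposal both rests on the false identity and, by your own admission, has no argument for the large-$k$ contribution when $\frac{1}{4} \leq d < \frac{1}{3}$, it is incomplete precisely at the decisive point. (Your claim that $d<\frac14$ is ``handled outright'' also rests on the false identity, even though a correct version of that conclusion does follow from the bounded shapes plus Theorem \ref{thm:ie}.)

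Two further gaps in the probabilistic part. First, when bounding the probability that a given cell bearing a relator $r$ is a corner of a fulfillable $2$-collared diagram, you must allow $r$ to be reused by other faces of that diagram; the paper first notes that with overwhelming probability $r$ has four distinct letters (so two $r$-faces cannot be adjacent), and in the one shape where $r$ can appear twice it deletes the second $r$-face and fixes five edges with letters of $r$, applying Theorem \ref{thm:lab} with $K=5$ -- your sketch only ever fixes the corner's own edges ($K=4$ or $8$). Second, the relator on the chosen cell is itself part of the random set, so the per-cell bound must be set up conditionally: the paper draws one relator $r$ first, conditions on it, applies the fixed-edge Theorem \ref{thm:lab} to the diagrams $a'$, $b'$, $b''$ obtained by removing the corner face and fixing two (or five) letters of $r$, and sums via Bayes; your expectation-plus-Markov bookkeeping over all $N$ cells could be made to work, but only after a per-cell bound of this conditional form is in place. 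Finally, the reduction via Lemma \ref{thm:segments} presupposes that hypergraphs are embedded trees, i.e.\ Lemma \ref{lem:et}, which should be invoked explicitly.
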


\begin{proof}
Let us consider a pair of hypergraphs $\Lambda$, $\Lambda'$ intersecting at least two times. Assume that $\Lambda$ and $\Lambda'$ cross at a 2-cell $C$. From Theorem \ref{thm:segments} we know that there exists a diagram $E$ collared by segments $\lambda_1 \subset \Lambda_1$ and $\lambda' \subset \Lambda'$ such that $C$ is its corner. Note that $|\partial E| = |\lambda| + |\lambda'|$ and $|E| \geq |\lambda| + |\lambda'| - 2$. From Theorem \ref{thm:ie} (isoperimetric inequality) we obtain that with overwhelming probability:

\begin{equation}\label{eq:trzy}
|\lambda| + |\lambda'| = |\partial E| \geq \frac{4}{3} |E| \geq \frac{4}{3}(|\lambda| + |\lambda'| - 2),
\end{equation}
which is equivalent to $|\lambda| + |\lambda'| < 8$, which implies $|\partial E| \leq 6$. Moreover $|E| \geq |\partial E| - 2$. Therefore, all possibilities which do not violate (\ref{eq:trzy}) are: $|E|=4$, $|\partial E|=6$ and $|E|=2, |\partial E|=4$. There are only three 2-collared diagrams satisfying (\ref{eq:trzy}) (See Figure \ref{fig:diagi})

\begin{figure}[h!]
	\centering
		\includegraphics[scale=0.8]{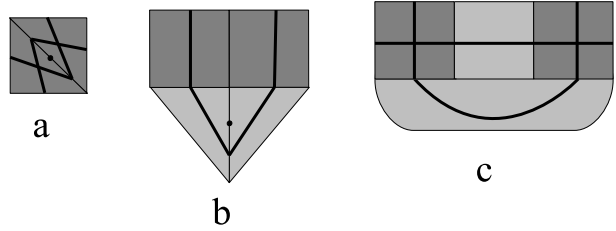}
	\caption{Diagrams collared by two segments. The segments are drawn with a solid thick line and the corners are highlighted in dark-gray.}
	\label{fig:diagi}
\end{figure}

According to Lemma \ref{lem:parzy} diagram $c$ cannot be fulfilled by positive relators. Therefore, we have to consider only cases: $a$ and $b$.

For each 2-cell $O \in \cay$ consider two hypergraphs passing through $O$. Assume, contrary to our conclusion, that each such pair of hypergraphs has at least two points in the intersection. According to our previous discussion this means that for each relator $r$ in the presentation there is a van Kampen diagram $E$ of form $a$ or $b$ (see Figure \ref{fig:diagi}) such that its corner bears $r$.

We can draw relators in two steps: in the first step we draw one relator $r$ and in the second step we draw $\lfloor n^{4d} \rfloor - 1$ remaining relators from the set $W_n - r$. This way of drawing the presentation gives us a specific relator $r$. We will show that with overwhelming probability relator $r$ is not borne by a corner in any van Kampen diagram of the shape $a$ or $b$.

The probability that there exists a van Kampen diagram of type $a$ or $b$ such that its corner bears $r$ is the same as the probability that one of the abstract diagrams presented in Figure \ref{fig:dlab} (where $x, y$ are two consecutive edges of $r$) can be fulfilled by the tuple from the random set of relators.

\begin{figure}[h!]
	\centering
		\includegraphics[scale=0.8]{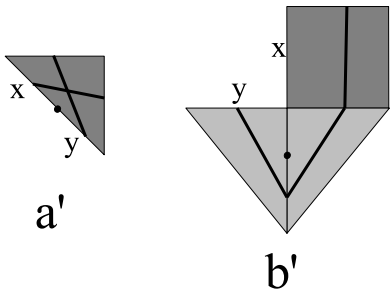}
	\caption{Diagrams with fixed letters. $x$ and $y$ are two consecutive letters of $r$.}
	\label{fig:dlab}
\end{figure}

Note that with overwhelming probability relator $r$ consists of four distinct letters (in fact this probability equals $\frac{n(n-1)(n-2)(n-3)}{n^4}$).

To fulfill $E$ we can use $r$ and other relators. Observe that since with overwhelming probability $r$ consists of different letters and $r$ is a positive word, than two faces bearing relator $r$ cannot be adjacent (such a pair of faces would make a reduction pair).

Hence, diagram $a'$ can be fulfilled only by a relator different than $r$. Let $P_{a'}$ be the probability of fulfilling $a'$. Observe that $a'$ is an abstract diagram with two fixed letters and one face. Moreover, note that $|\partial a'| - 2 \cdot 2 < \frac{4}{3}|a'|$ so from Theorem \ref{thm:lab} (used for $l=4$) the probability of fulfilling $a'$ is less than $n^{- 4 \varepsilon}$ for any $\varepsilon < 2(\frac{1}{3} - d)$. Let us fix some  $\varepsilon < 2(\frac{1}{3} - d)$.

According to the previous observation about faces bearing $r$, there can be maximally two faces bearing $r$ in $b$ in order to fulfill the diagram. Hence, there can be maximally one face bearing $r$ in $b'$.

Let $P_{b'}$ be the probability of fulfilling $b'$ without using the relator $r$. The diagram $b'$ is an abstract diagram with two fixed letters and satisfies $|\partial b'| - 2 \cdot 2 < \frac{4}{3}|b'|$, so again using Theorem \ref{thm:lab} we obtain that $P_{b'} \leq n^{- 4 \varepsilon}$.

Now we will estimate the probability $P_{b''}$ of fulfilling $b$ using the relator $r$ two times. The only face in $b'$ which can bear $r$ is the right bottom face. We can, therefore, consider diagram $b''$ where we remove this face and label the new boundary edges with three consecutive letters of $r$ (See Figure \ref{fig:2bis}).

\begin{figure}[h!]
	\centering
		\includegraphics[scale=0.8]{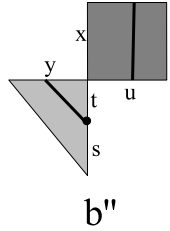}
	\caption{Diagram with five fixed edges: $x$, $y$ and $s$, $t$, $u$ are 2-tuples of consecutive letters of $r$.}
	\label{fig:2bis}
\end{figure}

Observe that diagram $b''$ satisfies: $|\partial b''| - 2 \cdot 5 < \frac{4}{3}|b''|$ so from Theorem \ref{thm:lab} (used for $l=4$) the probability of fulfilling $b''$ is less than $n^{- 4 \varepsilon}$.

For a fixed relator $r$ there are 8 possible pairs $x, y$ and also 8 possible triples $s, t, u$. Hence, we can estimate the probability $P_r$ that $r$ is a corner of a $2$-collared diagram:

$$ P < 8( P_{a'} + P_{b'} + 8 P_{b''} ) < 80 n^{- 4 \varepsilon}. $$

For each positive word $r$ of length 4 let $\widetilde{P}_r$ be the probability that the first relator equals $r$. Therefore, from the Bayes formula we can estimate the probability of fulfilling $E$ by:

$$ \sum_{r \in W_n}P_r \widetilde{P}_r \leq \sum_{r \in W_n}P_r \cdot 80 n^{- 4 \varepsilon} = 80 n^{- 4 \varepsilon},$$
since $\sum_{r \in W_n}P_r = 1$.

Therefore, choosing a $2$-cell in $\cay$ bearing the first relator and taking 2 hypergraphs passing through this cell, with overwhelming probability, gives us a pair of hypergraphs which cross only once.
\end{proof}

\begin{lem}\label{lem:leaf}
In the positive square model, at density $d > \frac{1}{4}$, all hypergraphs are leafless trees.
\end{lem}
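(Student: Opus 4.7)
The plan is to combine Lemma~\ref{lem:et}, which gives that hypergraphs in $\cay$ are embedded trees at densities below $\tfrac{1}{3}$, with a simple occurrence-counting argument exploiting the hypothesis $d > \tfrac{1}{4}$ to exclude leaves. This section works in the Property (T) regime $d < \tfrac{1}{3}$, so the ``tree'' part comes for free from Lemma~\ref{lem:et}; the real content of the statement is therefore the leafless property.

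First I would translate ``leafless'' into a purely combinatorial statement. A vertex of a hypergraph $\Lambda$ is by Definition~\ref{def:hyp} a $1$-cell $e$ of $\cay$, and its degree in $\Lambda$ equals the total number of incidences $(R,k)$ such that $e$ appears as the $k$-th boundary edge of a $2$-cell $R$ of $\cay$. By the free transitive action of $G$ on the vertices of $\cay$, it suffices to handle the $1$-cells emanating from the identity vertex, one per generator $a_i \in A_n$. Now each occurrence of $a_i$ at position $k$ in a relator $r = r_1 r_2 r_3 r_4 \in R_n$ produces exactly one such incidence -- namely, the $2$-cell for $r$ based at $(r_1 \cdots r_{k-1})^{-1}$ has $e_{a_i}$ as its $k$-th boundary edge -- and every incidence arises this way. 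Hence the degree of this $1$-cell equals
\[
N_i \;:=\; \#\bigl\{(r,k) : r \in R_n,\ r_k = a_i\bigr\},
\]
the total number of occurrences of the letter $a_i$ across the relators in $R_n$.

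It therefore suffices to verify that, with overwhelming probability, $N_i \geq 2$ for every $1 \leq i \leq n$. A direct calculation gives $\mathbb{E}\,N_i = 4|R_n|/n = 4 n^{4d-1}$, which tends to infinity precisely because $d > \tfrac{1}{4}$. Since $R_n$ is drawn uniformly without replacement from $W_n$ and each relator contributes at most $4$ to $N_i$, a Serfling-type tail inequality of the kind already used in Lemma~\ref{lem:ddp} gives $\mathbb{P}(N_i \leq 1) \leq \exp(-c\, n^{4d-1})$ for some constant $c > 0$ and all sufficiently large $n$. A union bound over the $n$ generators then yields
\[
\mathbb{P}\bigl(\exists\, i : N_i \leq 1\bigr) \;\leq\; n \exp(-c\, n^{4d-1}) \;\longrightarrow\; 0,
\]
completing the proof.

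The main obstacle is conceptual rather than computational: one must identify the correct quantity to count (letter occurrences in $R_n$) and verify carefully that it coincides with the degree of a hypergraph vertex in $\cay$. Once this identification is made the probabilistic tail bound is essentially standard and very close in spirit to the one already employed in Lemma~\ref{lem:ddp}.
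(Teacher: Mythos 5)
Your combinatorial reduction is essentially the paper's: the paper also observes that a leaf forces some generator to occur (in total) only once among the relators, and then bounds the probability of that event for a fixed generator and applies a union bound over the $n$ generators. The identification of the hypergraph-vertex degree at a $1$-cell labelled $a_i$ with the occurrence count $N_i$ is fine, and requiring $N_i\geq 2$ for all $i$ is a correct sufficient condition for leaflessness.

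The gap is in the concentration step. The inequality you invoke --- the Hoeffding--Serfling bound used in Lemma~\ref{lem:ddp}, which for $m$ draws of $[0,1]$-valued summands gives $\exp(-2mt^2)$ for an \emph{additive} deviation $t$ --- does not give $\mathbb{P}(N_i\leq 1)\leq\exp(-c\,n^{4d-1})$. Here the per-draw contribution has mean of order $1/n$, and the event $N_i\leq 1$ is a deviation of size $t\approx 4/n$ below the mean; with $m=\lfloor n^{4d}\rfloor$ the Serfling exponent is $-2mt^2\asymp -n^{4d-2}$, so the bound is $\exp(-c\,n^{4d-2})$, which for $\tfrac14<d<\tfrac12$ does not even tend to $0$, let alone survive the union bound over $n$ generators. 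The stated tail bound $\exp(-c\,n^{4d-1})$ is in fact true, but it is a \emph{multiplicative} (Poisson/Chernoff-type) estimate for a sparse count, not an additive Hoeffding-type one: you need either a multiplicative Chernoff bound for the hypergeometric count of drawn words containing $a_i$ (whose mean is $\asymp n^{4d-1}$), or the direct computation the paper performs, namely bounding the probability that $a_i$ lies in at most one relator by a ratio of binomial coefficients,
\[
(n^4-(n-1)^4)\,\frac{\binom{(n-1)^4}{\lfloor n^{4d}\rfloor-1}}{\binom{n^4}{\lfloor n^{4d}\rfloor-1}}\;\leq\;(n^4-(n-1)^4)\left(\frac{n-1}{n}\right)^{4(\lfloor n^{4d}\rfloor-1)}\!,
\]
which is $\mathrm{poly}(n)\cdot e^{-4n^{4d-1}}$ and beats the factor $n$ from the union bound precisely because $d>\tfrac14$. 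With that replacement your argument closes and coincides with the paper's.
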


\begin{proof}
Note, that a hypergraph can have a leaf only if there exists a generator which appears in exacly one relator. Let us consider a fixed generator $a$. The number of positive words of length 4 containing generator $a$ equals $(n^4 - (n-1)^4)$, so $(n^4 - (n-1)^4) \left((n-1)^4 \atop \lfloor  n^{4d} \rfloor - 1\right)$ is the number of presentations where exactly one relator contains $a$. Hence the probability that $a$ appears in exactly one relator equals:

$$p = \frac{(n^4 - (n-1)^4) \left((n-1)^4 \atop \lfloor n^{ 4d} \rfloor - 1\right)}{ \left(n^4 \atop \lfloor n^{ 4d} \rfloor \right)}  .$$

Since $d<1$ we have $\left(n^4 \atop \lfloor n^{ 4d} \rfloor \right) \geq \left(n^4 \atop \lfloor n^{ 4d} \rfloor - 1 \right)$, so we estimate:

$$p \leq  (n^4 - (n-1)^4) \frac{\left((n-1)^4 \atop \lfloor n^{ 4d} \rfloor - 1\right)}{\left(n^4 \atop \lfloor n^{ 4d} \rfloor - 1 \right)} $$

We continue estimation:

$$ \frac{\left((n-1)^4 \atop \lfloor n^{ 4d} \rfloor - 1\right)}{\left(n^4 \atop \lfloor n^{ 4d} \rfloor - 1 \right)} = \frac{(n-1)^4 ((n-1)^4 - 1) \dots ((n-1)^4 - \lfloor n^{4d} \rfloor) }{n^4 (n^4 - 1) \dots (n^4 - \lfloor n^{4d} \rfloor) } < \left(\frac{n-1}{n} \right)^{4 (\lfloor n^{4d} \rfloor - 1)}$$

The probability that there exists a generator that is contained in exactly one relator is bounded by $n p$. Let us denote $z_n =  n(n^4 - (n-1)^4 + 1) \left(\frac{n-1}{n} \right)^{4(\lfloor n^{4d} \rfloor - 1)}$. Note that $p n < z_n$, so it suffices to show that $\displaystyle \lim_{n \rightarrow \infty} \ln z_n = -\infty$. Note that: $z_n < 2n^5 \left(\frac{n-1}{n} \right)^{4 ( \lfloor n^{4d} \rfloor - 1 )}$. From the fact that $|\ln(1-x)| > x$ for $x \in (0,1)$, we can estimate:

\begin{equation}\label{eq:odw}
\ln z_n < \ln 2 + 5\ln n + 4\left( \lfloor n^{4d} \rfloor - 1 \right) \ln \left( \frac{n-1}{n} \right) < \ln 2 + 5\ln n - 4\left( \lfloor n^{4d} \rfloor - 1 \right)  \frac{1}{n}
\end{equation}

Since $d > \frac{1}{4}$ the right hand side of (\ref{eq:odw}) converges to $-\infty$ when $n \rightarrow \infty$.
\end{proof}

\subsection{For densities $\frac{1}{4} < d < \frac{1}{3}$ a random group in the positive square model does not have Property (T)}

\begin{defi}
For a hypergraph $\Lambda$ in $\cay$ the \textit{orientation preserving stabilizer} $\emph{Stabilizer}^+(\Lambda)$ is the index $\leq 2$ subgroup of $\emph{Stabilizer}(\Lambda)$ that also stabilizes each of the two components of $\cay - \Lambda$.
\end{defi}

We now recall the following

\begin{lemma}[{\cite[Lemma 7.2]{ow11}}]\label{thm:numb}
Suppose that a group $G$ acts cocompactly and freely on $\cay$ and the system of hypergraphs in $\cay$ is locally finite and cocompact (meaning that the hypergrahs in $\cay / G$ are compact and there is a finite number of them). Suppose that two distinct leafless hypergraphs $\Lambda_1$ and $\Lambda_2$, which are embedded trees, cross at a single point.

Then for $i=1,2$ the group $H_i = \emph{Stabilizer}^+(\Lambda_i)$ is a subgroup of $G$ with a relative number of ends $e(G,H_i) = 2$.
\end{lemma}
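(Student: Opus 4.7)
The plan is to verify $e(G,H_i)=2$ via the standard almost-invariant-set characterization of relative ends, exploiting the fact that $\Lambda_i$ separates $\cay$ and that both sides remain infinite modulo $H_i$. I treat $i=1$; the case $i=2$ is symmetric.

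By Lemma \ref{lem:con}, since $\Lambda_1$ is an embedded tree, $\cay\setminus\Lambda_1$ splits as a disjoint union $C^+\sqcup C^-$ of two connected components, and $H_1=\emph{Stabilizer}^+(\Lambda_1)$ preserves each. Fix a basepoint $v_0\in\cay\setminus\Lambda_1$ and set
\[
A=\{g\in G:gv_0\in C^+\}.
\]
Then $A$ is left $H_1$-invariant by construction. For any $g_0\in G$, an element $h$ lies in $A\triangle Ag_0$ exactly when $\Lambda_1$ separates $hv_0$ from $hg_0v_0$, equivalently when $h^{-1}\Lambda_1$ crosses the geodesic from $v_0$ to $g_0v_0$. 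Local finiteness of the hypergraph system together with the boundedness of $d(v_0,g_0v_0)$ imply that only finitely many translates of $\Lambda_1$ can cross this geodesic, and each such translate accounts for at most two $H_1$-cosets of $h$; hence $A\triangle Ag_0$ is $H_1$-finite.

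Both $A$ and $G\setminus A$ are $H_1$-infinite: using $\Lambda_2$, pick group elements carrying $v_0$ far out along the two rays of $\Lambda_2\setminus\{p\}$ at the unique intersection point $p$, which enter $C^+$ and $C^-$ respectively. Lemma \ref{lem:qi} says $\Lambda_2$ is quasi-isometrically embedded in $\cay$, and since $[G:H_1]=\infty$ and $H_1$ is free quasiconvex (Corollary \ref{cor:sub}), these elements project to infinitely many distinct $H_1$-cosets on each side. Combined with the almost invariance of $A$ above, this yields $e(G,H_1)\geq 2$, which together with Theorem \ref{thm:criterion} is already enough for the failure of Property (T).

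The sharp upper bound $e(G,H_1)\leq 2$ is the main obstacle. For this I would work in the cocompact quotient $H_1\backslash\cay$: since the hypergraph system is cocompact and locally finite, the image of the carrier of $\Lambda_1$ is compact, so ends of $H_1\backslash G$ correspond to ends of $H_1\backslash C^+$ and $H_1\backslash C^-$, and it suffices to show each has exactly one end. Combining hyperbolicity of $G$ (Corollary \ref{cor:hyp}) with the fact that $H_1$ is free and quasiconvex (Corollary \ref{cor:sub}) and acts cocompactly on the leafless tree $\Lambda_1$, any two geodesic rays in $\cay$ escaping into $C^+$ must fellow-travel modulo $H_1$ after a bounded initial segment, since otherwise their limit points on the Gromov boundary would produce an $H_1$-almost invariant set disjoint from $A$. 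This is the standard statement that a codimension-one quasiconvex subgroup of a hyperbolic group whose limit set separates the boundary into two pieces has relative number of ends at most two, and the analogous argument applies to $C^-$.
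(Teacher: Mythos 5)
The paper offers no proof of this lemma at all---it is quoted verbatim from \cite[Lemma 7.2]{ow11}---so the comparison is with Ollivier--Wise's argument. Your reduction of the lower bound to an $H_1$-almost invariant set is sound as far as it goes: $A=\{g: gv_0\in C^+\}$ is left $H_1$-invariant, and (up to the harmless $g_0$ versus $g_0^{-1}$ convention) the $H_1$-finiteness of $A\triangle Ag_0$ does follow from local finiteness of the hypergraph system together with $[\mathrm{Stab}(\Lambda_1):H_1]\le 2$. The genuine gap is the claim that $A$ and $G\setminus A$ are $H_1$-infinite. Travelling far along a half of $\Lambda_2\setminus\{p\}$ controls, via Lemma \ref{lem:qi}, the distance in $\cay$ from the crossing point $p$, not the distance from $\Lambda_1$; and ``$[G:H_1]=\infty$ plus $H_1$ free and quasiconvex'' simply does not imply that these particular orbit points meet infinitely many $H_1$-cosets. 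Because the cocompactness of the hypergraph system forces $H_1v_0$ to lie within bounded Hausdorff distance of $\Lambda_1$, $H_1$-infiniteness of $A$ is equivalent to $C^+$ containing orbit points arbitrarily far from $\Lambda_1$, so you must rule out that the half-tree of $\Lambda_2$ lying in $C^+$ stays inside some neighborhood $N_R(\Lambda_1)$. Nothing you invoke excludes this; excluding it is exactly where the single crossing, leaflessness, local finiteness and cocompactness enter (for instance, if the half-tree stayed $R$-close to $\Lambda_1$, cocompactness of $\mathrm{Stab}(\Lambda_1)$ acting on $\Lambda_1$ plus local finiteness of the hypergraph system would produce a nontrivial element stabilizing both hypergraphs and hence additional crossings, contradicting the hypothesis). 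This ``deepness of both halfspaces'' step is the heart of Ollivier--Wise's proof and is missing from yours.

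The upper bound $e(G,H_1)\le 2$ is likewise not established. The assertion that two geodesic rays escaping into $C^+$ must fellow-travel modulo $H_1$ ``since otherwise their limit points would produce an $H_1$-almost invariant set disjoint from $A$'' is not a contradiction of anything---a priori $e(G,H_1)$ could exceed $2$, which is precisely what you are trying to exclude---and the ``standard statement'' you appeal to presupposes knowing that $\partial G\setminus\Lambda(H_1)$ has exactly two components modulo $H_1$, which is essentially the assertion being proved and is nowhere verified (note also that cocompactness of the $H_1$-action on $\Lambda_1$ comes from the cocompact-system hypothesis, not from quasiconvexity). For the application in Theorem \ref{thm:faj} via the Niblo--Roller criterion only $e(G,H_i)\ge 2$ is needed, but even that inequality requires completing the deepness argument above; as written, neither inequality is proved.
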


\begin{theo}\label{thm:faj}
In the positive square model at density $\frac{1}{4} < d < \frac{1}{3}$ with overwhelming probability a random group $G$ has a subgroup $H$ which is free, quasiconvex and such that the relative number of ends $e(G, H)$ is at least 2. In particular with overwhelming probability $G$ does not have Property \emph{(T)}.
\end{theo}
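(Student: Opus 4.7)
The plan is to assemble the pieces built up in the earlier subsections into a single application of the criterion Theorem~\ref{thm:criterion}. Specifically, I would produce two hypergraphs $\Lambda_1,\Lambda_2$ in the Cayley complex $\widetilde{X}$ that are leafless embedded trees crossing at exactly one point, then feed them into Lemma~\ref{thm:numb} to obtain a subgroup $H$ with $e(G,H)=2$, and finally invoke Corollary~\ref{cor:sub} to ensure that this $H$ is free and quasiconvex. Then Theorem~\ref{thm:criterion} yields failure of Property (T).

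First I would verify that the hypotheses of Lemma~\ref{thm:numb} hold with overwhelming probability. The group $G$ acts freely and cocompactly on $\widetilde{X}$ by the standard theory of the universal cover of the (finite) presentation complex; the system of hypergraphs is locally finite and cocompact because the presentation complex has only finitely many $2$-cells and each hypergraph in $\widetilde{X}/G$ is compact. Next, Lemma~\ref{lem:et} (applicable since $d<\tfrac{1}{3}$) tells us that every hypergraph is an embedded tree. Since we assume $d>\tfrac{1}{4}$, Lemma~\ref{lem:leaf} guarantees that every hypergraph is leafless. Finally, Theorem~\ref{thm:once} supplies the two hypergraphs $\Lambda_1,\Lambda_2$ meeting at a single point.

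Having checked all hypotheses, Lemma~\ref{thm:numb} gives that $H_i := \mathrm{Stabilizer}^+(\Lambda_i)$ satisfies $e(G,H_i)=2$ for $i=1,2$. Setting $H=H_1$, Corollary~\ref{cor:sub} asserts that the full stabilizer $\mathrm{Stabilizer}(\Lambda_1)$ is a free, quasiconvex subgroup of $G$. Since $H$ has index at most $2$ in that stabilizer, $H$ inherits freeness (by Nielsen--Schreier, or simply because subgroups of free groups are free) and quasiconvexity (finite-index subgroups of quasiconvex subgroups of a hyperbolic group are quasiconvex, using hyperbolicity from Corollary~\ref{cor:hyp}). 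Applying Theorem~\ref{thm:criterion} to $H$ then yields that $G$ does not have Property (T).

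There is essentially no remaining obstacle: all the substantive geometric content has been discharged in earlier statements. The only delicate point is ensuring the assumptions of Lemma~\ref{thm:numb} genuinely combine into a single overwhelming-probability event, but this is immediate because a finite intersection of overwhelming-probability events (hypergraphs are embedded trees, are leafless, and a single-point crossing exists) is again an overwhelming-probability event. The narrow density window $\tfrac{1}{4}<d<\tfrac{1}{3}$ reflects exactly the two thresholds needed: $d>\tfrac{1}{4}$ for leaflessness and $d<\tfrac{1}{3}$ for the single-crossing pair (via the isoperimetric inequality Theorem~\ref{thm:ie}).
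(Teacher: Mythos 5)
Your proposal is correct and follows essentially the same route as the paper: combine Lemma~\ref{lem:et}, Lemma~\ref{lem:leaf} and Theorem~\ref{thm:once} to verify the hypotheses of Lemma~\ref{thm:numb}, take $H=\mathrm{Stabilizer}^+(\Lambda_1)$, get freeness and quasiconvexity from Corollary~\ref{cor:sub} (passing to the index $\leq 2$ subgroup), and conclude via Theorem~\ref{thm:criterion}. The paper's argument is the same assembly of these ingredients, so there is nothing further to add.
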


\begin{proof}
From Lemma \ref{lem:et} we know that with overwhelming probability hypergraphs in $\cay$ are embedded trees. Then from Corollary \ref{cor:sub} we know that with overwhelming probability $\emph{Stabilizer}(\Lambda)$ of any hypergraph $\Lambda$ is a free, quasiconvex subgroup, hence so is $\emph{Stabilizer}^+(\Lambda)$ since it is a subgroup in $\emph{Stabilizer}(\Lambda)$ of index $\leq 2$.

The presentation complex $X$ of $G$ is finite since our group is finitely presented so $G$ acts cocompactly on $\cay$ and the system of hypergraphs is locally finite and cocompact. From Theorem \ref{thm:once} we know that with overwhelming probability there is a pair of hypergraphs which intersect exactly once and from Lemma \ref{lem:leaf} we have that with overwhelming probability all hypergraphs are leafless trees.

Hence, from Lemma \ref{thm:numb} we have that with overwhelming probability there is a subgroup in $G$ which is free, quasiconvex and that the relative number of ends $e(G, H)$ is at least 2.

Finally using Theorem \ref{thm:criterion} we get that $G$ does not have Property (T).
\end{proof}

\subsection{Groups without Property (T) in the square model}

We proved in Section \ref{sec:iso} that the isoperimetric inequality holds in the square model. Some of the proofs of lemmas and theorems in the previous section can be generalized to the square model.

\begin{lem}\label{lem:ets} In the square model for densities $< \frac{1}{3}$ the hypergraphs in the Cayley complex of a random group are embedded trees.
\end{lem}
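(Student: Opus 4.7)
The plan is to mimic the proof of Lemma \ref{lem:et} almost verbatim, replacing every appeal to the positive-model isoperimetric inequality (Theorem \ref{thm:ie}) by its square-model analogue (Theorem \ref{thm:ies}), which has already been established in Section \ref{sec:iso}. No new probabilistic estimate is needed, only a check that the combinatorial parts of the positive-model argument go through for cyclically reduced, not necessarily positive, relators of length $4$.

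First I would assume for contradiction that some hypergraph $\Lambda$ in $\cay$ fails to be an embedded tree. By Theorem \ref{thm:collar} this produces a reduced van Kampen diagram $D$ collared by a segment $\lambda \subset \Lambda$. Setting $n = |\partial D|$ and fixing any $\varepsilon < 2\bigl(\tfrac{1}{3} - d\bigr)$, Theorem \ref{thm:ies} gives with overwhelming probability the bound
$$n = |\partial D| \geq 4(1 - 2d - \varepsilon)|D| > \tfrac{4}{3}\,|D|.$$

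Next I would split into cases exactly as in the positive-model proof. If $D$ is cornerless then $|D| \geq n$, yielding the immediate contradiction $n > \tfrac{4}{3} n$. If $D$ has a corner then $|D| \geq n-1$; the subcase $|D| \geq n$ again gives the same contradiction, and $|D| = n-1$ forces $n > \tfrac{4}{3}(n-1)$, i.e.\ $n < 4$. Since all relators have length $4$ and each internal edge is shared by exactly two faces, the identity $4|D| = |\partial D| + 2\cdot(\text{number of internal edges})$ forces $|\partial D|$ to be even, so the hypothetical leftover case $n = 3,\ |D| = 2$ is ruled out automatically, finishing the argument.

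The only point that deserves extra attention is this parity step, since in the positive model it was packaged inside Lemma \ref{lem:parzy} which explicitly used positivity. In the square model I would avoid invoking that lemma and instead use the direct counting identity above, which depends only on relators having even length. With this one adjustment the proof is a direct transcription of the proof of Lemma \ref{lem:et}, and I do not expect any genuine obstacle.
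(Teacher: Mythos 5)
Your proposal is correct and matches the paper's proof, which simply declares the argument identical to that of Lemma \ref{lem:et} with the square-model isoperimetric inequality (Theorem \ref{thm:ies}) in place of Theorem \ref{thm:ie}. Your parity remark is a fine (and harmless) elaboration: the paper's original argument never actually relied on Lemma \ref{lem:parzy} at this point, only on the fact that relators of even length force even boundary length, which is exactly the counting identity you use.
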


\begin{proof}
The proof is identical to the proof of Lemma \ref{lem:et}.
\end{proof}

Let $\cay$ be the Cayley complex of the random group in the square model at density $d$.

\begin{lem}\label{lem:leafs}
In the square model at density $d > \frac{1}{4}$  all hypergraphs are leafless trees.
\end{lem}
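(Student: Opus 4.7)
The plan is to follow the proof of Lemma \ref{lem:leaf} essentially verbatim, with cyclically reduced words of length four in place of positive ones. The key combinatorial observation is that a hypergraph in the presentation complex of $G = \langle A_n \mid R_n \rangle$ has a leaf only if there is a generator $a \in A_n$ such that $a$ together with $a^{-1}$ appears in at most one of the drawn relators: the degree of the node corresponding to the $1$-cell labelled $a$ in the underlying graph of the hypergraph equals the total number of occurrences of $a$ or $a^{-1}$ in the cyclic boundaries of the $2$-cells, so a leaf forces this total to equal one.

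Write $M_n = |W'_n|$ and let $M_n(\overline{a})$ denote the number of cyclically reduced length-four words over $A_n$ that use neither $a$ nor $a^{-1}$; a direct count yields $M_n \asymp (2n-1)^4$ and $M_n(\overline{a}) \asymp (2n-3)^4$, so that
$$\frac{M_n(\overline{a})}{M_n} \leq 1 - \frac{c}{n}$$
for an absolute constant $c > 0$ and all $n$ sufficiently large. The probability $p$ that a fixed generator $a$ occurs (in either sign) in exactly one relator of $R_n$ equals
$$p = \frac{(M_n - M_n(\overline{a}))\binom{M_n(\overline{a})}{\lfloor (2n-1)^{4d} \rfloor - 1}}{\binom{M_n}{\lfloor (2n-1)^{4d} \rfloor}},$$
and the same manipulations as in Lemma \ref{lem:leaf} give
$$p \;\leq\; (M_n - M_n(\overline{a})) \left(\frac{M_n(\overline{a})}{M_n}\right)^{\lfloor (2n-1)^{4d}\rfloor - 1} \;\leq\; C\, n^4 \left(1 - \frac{c}{n}\right)^{\lfloor (2n-1)^{4d}\rfloor - 1}.$$
The event that $a$ occurs in no relator at all admits an analogous (and strictly stronger) bound.

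Union-bounding over the $n$ choices of $a$ and taking logarithms, using $|\ln(1-x)| > x$ on $(0,1)$ exactly as in the proof of Lemma \ref{lem:leaf}, one obtains
$$\ln(np) \;<\; \ln C + 5 \ln n - \frac{c\,(\lfloor (2n-1)^{4d} \rfloor - 1)}{n}.$$
Since $d > \tfrac14$ the subtracted term grows polynomially in $n$, dominating $5 \ln n$, so $np \to 0$ and with overwhelming probability no hypergraph has a leaf. I do not anticipate any real obstacle: the proof is a line-by-line transcription of the argument in the positive square model, and the only nontrivial input is the elementary estimate $M_n(\overline{a})/M_n \leq 1 - \Theta(1/n)$, which is immediate from the counts above.
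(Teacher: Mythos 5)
Your proof is correct and is essentially the paper's own argument: the paper simply declares the proof analogous to Lemma \ref{lem:leaf}, and your adaptation (degree of a hypergraph vertex equals the number of occurrences of $a^{\pm 1}$ in relators, the count of cyclically reduced words avoiding $a^{\pm 1}$, the ratio bound $1-\Theta(1/n)$, and the same binomial and logarithm estimates) is exactly the intended transcription to the square model.
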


\begin{proof}
The proof is analogous to the proof of \ref{lem:leaf}.
\end{proof}

\begin{theo}\label{thm:onces}
With overwhelimg probability, in the square model, there exists a pair of hypergraphs $\Lambda_1$, $\Lambda_2$ in $\cay$ such that $\Lambda_1 \cap \Lambda_2$ intersect only once.
\end{theo}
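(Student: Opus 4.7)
The plan is to transcribe the proof of Theorem~\ref{thm:once} to the square model, replacing $n$ by $2n-1$ in the probability estimates and using Theorem~\ref{thm:ies} and Theorem~\ref{thm:labs} in place of Theorem~\ref{thm:ie} and Theorem~\ref{thm:lab}. Suppose, aiming at a contradiction to the conclusion, that every pair of hypergraphs passing through a common 2-cell $C$ meets at more than one point. By Lemma~\ref{thm:segments} we obtain, for each such $C$, a reduced van Kampen diagram $E$ collared by two segments $\lambda_1 \subset \Lambda_1$ and $\lambda_2 \subset \Lambda_2$ with $C$ as one of its corners. Applying the square-model isoperimetric inequality (Theorem~\ref{thm:ies}) exactly as in equation (\ref{eq:trzy}) forces $|\partial E| \leq 6$ and $|E| \leq 4$, so $E$ is one of the three small shapes $a, b, c$ enumerated in the proof of Theorem~\ref{thm:once}.

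For diagrams $a$ and $b$ the argument of Theorem~\ref{thm:once} transfers essentially verbatim. One fixes the first drawn relator $r$, observes that with probability tending to $1$ its four letters involve four pairwise distinct generators (a short counting check: among the $\approx (2n-1)^4$ cyclically reduced words of length four, the proportion with four distinct generator-letters is $(2n-2)(2n-4)(2n-6)/(2n-1)^3 \to 1$), and concludes that no two adjacent faces of $E$ may both bear $r$ without producing a reduction pair. The residual abstract diagrams $a', b', b''$ with their fixed-letter labelings satisfy $|\partial \cdot| - 2K < \frac{4}{3}|\cdot|$, so Theorem~\ref{thm:labs} yields for each of them a bound of the shape $(2n-1)^{-4\varepsilon}$ for any chosen $\varepsilon < 2(\tfrac{1}{3} - d)$.

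The genuine novelty in the square model is that diagram $c$ can no longer be excluded via the parity Lemma~\ref{lem:parzy}, since relators need not be positive. Instead I handle $c$ directly: mark the two consecutive edges of the corner of $c$ bearing $r$ as fixed (so $K = 2$), verify that the resulting abstract diagram with fixed edges still satisfies $|\partial c| - 2K < \frac{4}{3}|c|$ strictly, and apply Theorem~\ref{thm:labs} to obtain the same $(2n-1)^{-4\varepsilon}$ bound. Summing the three bounds over the finitely many shapes, corner positions, and pair-of-letter choices for $(x,y)$ and $(s,t,u)$, and then applying the Bayes-formula step of Theorem~\ref{thm:once}, gives that with overwhelming probability $r$ is not the corner relator of any such $E$. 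Hence the two hypergraphs through any 2-cell bearing the first drawn relator intersect at a unique point.

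I expect the only real obstacle to be the bookkeeping for diagram $c$: one must draw its shape explicitly from the three cases $|E| \in \{2,4\}$ left by Theorem~\ref{thm:ies}, count its fixed edges carefully, and check that the strict inequality $|\partial c| - 2K < \frac{4}{3}|c|$ holds so that the exponent in Theorem~\ref{thm:labs} is genuinely negative for some $\varepsilon < 2(\tfrac{1}{3} - d)$. Once this case is dispatched, the remainder is a mechanical adaptation of Theorem~\ref{thm:once}.
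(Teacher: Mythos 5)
Your proposal is correct and follows the same skeleton as the paper, which at this point literally says only that the proof of Theorem \ref{thm:once} is analogous. The genuinely valuable difference is your treatment of diagram $c$: in the positive square model that shape is excluded by the parity Lemma \ref{lem:parzy}, which depends on the relators being positive and is therefore unavailable in the square model --- a point the paper's one-line proof glosses over. Your fix --- delete the corner face bearing the first-drawn relator $r$, fix the two attaching edges with the corresponding letters of $r$, check the strict inequality $|\partial c'| - 2K < \frac{4}{3}|c'|$ (here $|c'| = 3$, $|\partial c'| = 6$, $K = 2$, so $2 < 4$), and invoke Theorem \ref{thm:labs} --- is exactly the mechanism already used for $a'$ and $b'$, so it does the job. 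Two bookkeeping items should be made explicit. First, the relevant genericity statement is that the four letters of $r$ involve four pairwise distinct \emph{generators} (not merely distinct signed letters), since it is this that forces a shared edge to occupy the same position $k$ in both faces and hence to produce either a reduction pair or a letter equal to its own inverse; your count $(2n-2)(2n-4)(2n-6)/(2n-1)^3 \to 1$ is the right one for this. Second, as with $b''$ in the original proof, you must also dispose of fulfillings of $c$ in which a second, non-adjacent face bears $r$: delete that face too, fix its attaching edges by letters of $r$ (which increases $K$), and note that the inequality $|\partial \cdot| - 2K < \frac{4}{3}|\cdot|$ only becomes easier, so Theorem \ref{thm:labs} again yields a bound of the form $(2n-1)^{-4\varepsilon}$. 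With these included, the final Bayes-formula summation goes through verbatim and your argument is complete; it is in fact a more careful version of what the paper's ``analogous'' proof must implicitly contain.
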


\begin{proof}
The proof is analogous to the proof of Theorem \ref{thm:once}.
\end{proof}

Combining Theorem \ref{thm:onces} with Lemma \ref{lem:leafs}, by Lemma \ref{thm:numb} and Theorem \ref{thm:criterion} we obtain the following:

\begin{theo}\label{thm:nT}
In the square model at density $d < \frac{1}{3}$ with overwhelming probability a random group does not have property \emph{(T)}.
\end{theo}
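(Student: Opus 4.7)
The plan is to repeat the argument of Theorem \ref{thm:faj} verbatim, replacing each ingredient by its square-model analogue already established in this subsection. Concretely, the group $G$ is finitely presented (the presentation has $\lfloor (2n-1)^{4d}\rfloor$ relators of length $4$), so $G$ acts cocompactly and freely on its Cayley complex $\cay$, and the induced system of hypergraphs is locally finite and cocompact.

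First I would apply Lemma \ref{lem:ets} to conclude that with overwhelming probability every hypergraph in $\cay$ is an embedded tree. Next, Theorem \ref{thm:onces} gives, again with overwhelming probability, two hypergraphs $\Lambda_1,\Lambda_2\subset \cay$ whose intersection is a single point. Lemma \ref{lem:leafs} ensures that, since $\frac{1}{4}<d<\frac{1}{3}$ (the range actually claimed in the statement of the non-positive analogue; for $d\leq \frac{1}{4}$ the group is already free by Theorem \ref{thm:frees} and hence does not have Property (T) either), both $\Lambda_1$ and $\Lambda_2$ are leafless trees.

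With these three facts in hand, Lemma \ref{thm:numb} applies to the pair $(\Lambda_1,\Lambda_2)$ and yields that $H_i := \mathrm{Stabilizer}^+(\Lambda_i)$ is a subgroup of $G$ with relative number of ends $e(G,H_i)=2$. Theorem \ref{thm:criterion} then immediately implies that $G$ does not have Property (T), which is the desired conclusion.

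The main obstacle is purely bookkeeping: one must check that the auxiliary facts used in the positive-square proof of Theorem \ref{thm:faj} (quasiconvex and free stabilizers via Corollary \ref{cor:sub}, hyperbolicity via Corollary \ref{cor:hyps}, and Lemma \ref{lem:qi}) all have square-model analogues. Hyperbolicity is Corollary \ref{cor:hyps}; the quasi-isometric embedding of hypergraphs follows by repeating the proof of Lemma \ref{lem:qi} using Theorem \ref{thm:ies} in place of Theorem \ref{thm:ie}; and the freeness of stabilizers follows as before from the tree structure of hypergraphs together with the Bass--Serre theorem. Once these adaptations are noted, no new computation is required and the theorem follows.
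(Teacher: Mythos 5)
Your proposal follows essentially the same route as the paper: the paper's proof is exactly the one-line combination of Lemma \ref{lem:ets}, Theorem \ref{thm:onces}, Lemma \ref{lem:leafs}, Lemma \ref{thm:numb} and Theorem \ref{thm:criterion}, and your bookkeeping of the square-model analogues (Corollary \ref{cor:hyps}, Theorem \ref{thm:ies}, the stabilizer argument) matches the intended adaptation. Your explicit remark that the range $d\leq\frac{1}{4}$ is covered by freeness (Theorem \ref{thm:frees}) is a sensible touch the paper leaves implicit, since Lemma \ref{lem:leafs} is only stated for $d>\frac{1}{4}$.
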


\begin{bibdiv}
\begin{biblist}

   \bib{bdl}{book}{
   author={Bekka, Bachir},
   author={de la Harpe, Pierre},
   author={Valette, Alain},
   title={Kazhdan's property (T)},
   series={New Mathematical Monographs},
   volume={11},
   publisher={Cambridge University Press},
   place={Cambridge},
   date={2008},
   pages={xiv+472}}

   \bib{bela}{book}{
   author={Bollob{\'a}s, B{\'e}la},
   title={Random graphs},
   series={Cambridge Studies in Advanced Mathematics},
   volume={73},
   edition={2},
   publisher={Cambridge University Press},
   place={Cambridge},
   date={2001},
   pages={xviii+498},
   isbn={0-521-80920-7},
   isbn={0-521-79722-5},
   review={\MR{1864966 (2002j:05132)}},
   doi={10.1017/CBO9780511814068},
}

    \bib{er59}{article}{
   author={Erd{\"{o}}s, P.},
   author={R{\'e}nyi, A.},
   title={On random graphs. I},
   journal={Publ. Math. Debrecen},
   volume={6},
   date={1959},
   pages={290--297}}

   \bib{er60}{article}{
   author={Erd{\"{o}}s, P.},
   author={R{\'e}nyi, A.},
   title={On the evolution of random graphs},
   language={English, with Russian summary},
   journal={Magyar Tud. Akad. Mat. Kutat\'o Int. K\"ozl.},
   volume={5},
   date={1960},
   pages={17--61}}

   \bib{gro93}{article}{
   author={Gromov, M.},
   title={Asymptotic invariants of infinite groups},
   conference={
      title={Geometric group theory, Vol.\ 2},
      address={Sussex},
      date={1991},
   },
   book={
      series={London Math. Soc. Lecture Note Ser.},
      volume={182},
      publisher={Cambridge Univ. Press},
      place={Cambridge},
   },
   date={1993},
   pages={1--295}}

   \bib{kot11}{article}{
   author={Kotowski, Marcin},
   author={Kotowski, Micha{\l}},
   title={Random groups and property $(T)$: \.Zuk's theorem revisited},
   journal={J. Lond. Math. Soc. (2)},
   volume={88},
   date={2013},
   number={2},
   pages={396--416},
}

   \bib{ls}{book}{
   author={Lyndon, Roger C.},
   author={Schupp, Paul E.},
   title={Combinatorial group theory},
   note={Ergebnisse der Mathematik und ihrer Grenzgebiete, Band 89},
   publisher={Springer-Verlag},
   place={Berlin},
   date={1977},
   pages={xiv+339}}

   \bib{nr98}{article}{
   author={Niblo, Graham A.},
   author={Roller, Martin A.},
   title={Groups acting on cubes and Kazhdan's property (T)},
   journal={Proc. Amer. Math. Soc.},
   volume={126},
   date={1998},
   number={3},
   pages={693--699}}

   \bib{oll05}{book}{
   author={Ollivier, Yann},
   title={A January 2005 invitation to random groups},
   series={Ensaios Matem\'aticos [Mathematical Surveys]},
   volume={10},
   publisher={Sociedade Brasileira de Matem\'atica},
   place={Rio de Janeiro},
   date={2005},
   pages={ii+100}}

   \bib{oll-f}{article}{
   author={Ollivier, Yann},
   title={Some small cancellation properties of random groups},
   journal={Internat. J. Algebra Comput.},
   volume={17},
   date={2007},
   number={1},
   pages={37--51}}

   \bib{ow11}{article}{
   author={Ollivier, Yann},
   author={Wise, Daniel T.},
   title={Cubulating random groups at density less than $1/6$},
   journal={Trans. Amer. Math. Soc.},
   volume={363},
   date={2011},
   number={9},
   pages={4701--4733}}

   \bib{sw77}{article}{
   author={Scott, Peter},
   author={Wall, Terry},
   title={Topological methods in group theory},
   conference={
      title={Homological group theory},
      address={Proc. Sympos., Durham},
      date={1977},
   },
   book={
      series={London Math. Soc. Lecture Note Ser.},
      volume={36},
      publisher={Cambridge Univ. Press},
      place={Cambridge},
   },
   date={1979},
   pages={137--203}}

   \bib{serfling}{article}{
   author={Serfling, R. J.},
   title={Probability inequalities for the sum in sampling without
   replacement},
   journal={Ann. Statist.},
   volume={2},
   date={1974},
   pages={39--48},
   issn={0090-5364},
   review={\MR{0420967 (54 \#8976)}},
}

     \bib{serr}{book}{
   author={Serre, Jean-Pierre},
   title={Trees},
   note={Translated from the French by John Stillwell},
   publisher={Springer-Verlag},
   place={Berlin},
   date={1980},
   pages={ix+142}}

   \bib{zuk96}{article}{
   author={{\.Z}uk, Andrzej},
   title={La propri\'et\'e (T) de Kazhdan pour les groupes agissant sur les
   poly\`edres},
   language={French, with English and French summaries},
   journal={C. R. Acad. Sci. Paris S\'er. I Math.},
   volume={323},
   date={1996},
   number={5},
   pages={453--458}}

\bib{zuk03}{article}{
   author={{\.Z}uk, Andrzej},
   title={Property (T) and Kazhdan constants for discrete groups},
   journal={Geom. Funct. Anal.},
   volume={13},
   date={2003},
   number={3},
   pages={643--670}}

\end{biblist}
\end{bibdiv}

\end{document}